\newtheorem{theorem}{Theorem}[section]
\newtheorem{lemma}{Lemma}[section]
\newtheorem{remark}{Remark}[section]
\newtheorem{example}{Example}[section]
\newtheorem{assumption}{Assumption}
\begin{document}
\begin{frontmatter}
		
\title{Fast convergence rates and trajectory convergence of a Tikhonov regularized inertial primal\mbox{-}dual dynamical system with time scaling and vanishing damping\tnoteref{mytitlenote}}
		
%\tnotetext[mytitlenote]{This work was supported by  the National Natural Science  Foundation of China(11471230).}
		
\author[mymainaddress]{Ting-Ting Zhu}
\ead{zttsicuandaxue@126.com}
		
\author[mysecondaryaddress]{Rong Hu}
\ead{ronghumath@aliyun.com}
		
\author[mymainaddress]{Ya-Ping Fang\corref{mycorrespondingauthor}}
\cortext[mycorrespondingauthor]{Corresponding author}
\ead{ypfang@scu.edu.cn}
		
\address[mymainaddress]{Department of Mathematics, Sichuan University, Chengdu, Sichuan, P.R. China}
\address[mysecondaryaddress]{Department of Applied Mathematics, Chengdu University of Information Technology, Chengdu, Sichuan, P.R. China}
		
\begin{abstract}
A Tikhonov regularized inertial primal\mbox{-}dual dynamical system with time scaling and vanishing damping is proposed for solving a linearly constrained convex optimization problem in Hilbert spaces. The system under consideration consists of two coupled second order differential equations and its convergence properties depend upon the decaying speed of the product of the time scaling parameter and the Tikhonov regularization parameter (named the rescaled regularization parameter) to zero.  When the rescaled regularization parameter converges rapidly to zero, the system enjoys  fast convergence rates of the primal-dual gap, the feasibility violation, the objective residual, and the gradient norm of the objective function along the trajectory, and the weak convergence of the trajectory to a primal-dual solution of the linearly constrained convex optimization problem. When the rescaled regularization parameter converges slowly to zero, the generated primal trajectory converges strongly to the minimal norm solution of the  problem under suitable conditions. Finally,  numerical experiments are performed to illustrate the theoretical findings.
\end{abstract}
		
\begin{keyword}
Linearly constrained convex optimization problem; Inertial primal-dual dynamical system; Tikhonov regularization; Convergence rates; Trajectory convergence; Minimal norm solution.				
\end{keyword}		
\end{frontmatter}
	
\section{Introduction}
\subsection{Problem statement and motivation}

Let $\mathcal{X}$ and $\mathcal{Y}$ be two real Hilbert spaces with the inner product $\langle \cdot, \cdot\rangle$ and the associated norm $\|\cdot\|$. The Cartesian product $\mathcal{X}\times\mathcal{Y}$ is endowed with the inner product and the associated norm by
$$\langle (x,y),(u,v)\rangle=\langle x,u\rangle+\langle y,v\rangle \quad\text{ and }\quad \|(x,y)\|=\sqrt{\|x\|^2+\|y\|^2}$$
for all   $(x,y),(u,v)\in \mathcal{X}\times\mathcal{Y}$.

In this paper we are concerned with  the linearly constrained convex optimization problem
\begin{equation}\label{z1}
	\min_{x\in\mathcal{X}}  \quad f(x), \quad \text{ s.t. }  \  Ax = b,
\end{equation} 
where $A: \mathcal{X}\rightarrow \mathcal{Y}$ is a continuous linear operator, $b\in\mathcal{Y}$, and $f: \mathcal{X}\rightarrow \mathbb{R}$ is a continuously differentiable convex function such that its gradient operator $\nabla f$ is Lipschitz continuous with constant $L>0$ over  $\mathcal{X}$. Problem \eqref{z1} underlies many important applications arising image recovery, machine learning, network optimization, and the energy dispatch of power grids. See e.g. \cite{GoldsteinTandDonoghue(2014),ZLinandLiHandFang(2020),ZengXLandLeiJLandChenJ(2022), PYiandHongYandLiu(2015)}. 

The Lagrangian function $\mathcal{L}$ of Problem \eqref{z1} is defined by
\begin{eqnarray*}
	\mathcal{L}(x,\lambda)=f(x)+\langle \lambda, Ax-b\rangle
\end{eqnarray*}
and the augmented Lagrangian function $\mathcal{L}_{\rho}$ with  penalty parameter $\rho\ge 0$  is defined  by
\begin{eqnarray*}\label{zz1}
	\mathcal{L}_{\rho}(x,\lambda)=\mathcal{L}(x,\lambda)+\frac{\rho}{2}\|Ax-b\|^2
	=f(x)+\langle \lambda, Ax-b\rangle +\frac{\rho}{2}\|Ax-b\|^2.
\end{eqnarray*}
 The  dual problem  and  the augmented Lagrangian dual problem of Problem \eqref{z1} are formulated respectively as
\begin{eqnarray}\label{dudu5}
	\max_{\lambda\in\mathcal{Y}}d(\lambda)
\end{eqnarray}
and 
\begin{equation}\label{dp-fyp}
	\max_{\lambda\in\mathcal{Y}}d_{\rho}(\lambda),
\end{equation}
where 
$$d(\lambda)=\min_{x\in\mathcal{X}}\mathcal{L}(x,\lambda) \quad \text{ and }\quad
d_{\rho}(\lambda)=\min_{x\in\mathcal{X}}\mathcal{L}_{\rho}(x,\lambda).$$ 

It is well-known that  $\mathcal{L}(x,\lambda)$ and $\mathcal{L}_{\rho}(x,\lambda)$ have  a same saddle point set $\Omega$, and 
\begin{eqnarray}\label{zcs1}
	(x^*, \lambda^*)\in\Omega \Leftrightarrow \left\{\begin{aligned}&\nabla f(x^*)+A^T \lambda^*&=0,\\ &Ax^*-b&=0.\end{aligned} \right.
\end{eqnarray}
If $(x^*, \lambda^*)\in\Omega$, then $x^*$ is  a solution of Problem \eqref{z1} and  $\lambda^*\in\mathcal{Y}$ is a solution of Problem \eqref{dudu5}. So, a pair $(x^*, \lambda^*)\in\Omega$ is also called a primal-dual solution of Problem \eqref{z1}.
If $\lambda^*\in\mathcal{Y}$  is a solution of Problem \eqref{dp-fyp}, according to \cite[Chapter III: Remark 2.5]{FortinandGlowinski(1983)}) and \cite[Section 2]{zhuhufang1}, it holds 
\begin{eqnarray}\label{zttonefang11}
	\mathcal{D}(\lambda^*):=\arg\min_{x\in\mathcal{X}}\mathcal{L}_{\rho}(x,\lambda^*)\subseteq S,
\end{eqnarray}
where $\rho>0$. 

If $A=0$ and $b=0$, then  Problem \eqref{z1} becomes the unconstrained convex optimization problem
\begin{equation}\label{zfg1}
	\min_{x\in\mathcal{X}}  \quad f(x).
\end{equation} 

Various Tikhonov regularized dynamical systems have been proposed  in the literature for the unconstrained problem \eqref{zfg1}. Convergence properties of a Tikhonov regularized dynamical system depends upon the speed of convergence of the Tikhonov regularization parameter to zero. A Tikhonov regularized dynamical system enjoys convergence properties similar to the original  system ( without a Tikhonov regularization term) when the  Tikhonov regularization parameter decays rapidly to zero,  while the trajectory converges strongly to the minimal norm solution of  Problem \eqref{zfg1} when the Tikhonov regularization parameter converges slowly to zero. See, e.g., \cite{AttouchandCzarnecki(2002),AttouchZH2018,AttouchandSzilardLaszlo(2021),Laszlo-JDE,XuWen2021}. In recent years, some researchers began to focuse on the studies of inertial primal-dual dynamical systems for the linearly constrained convex optimization problem \eqref{z1}.  
%Compared with rich results on inertial dynamical systems for the  unconstrained  problem \eqref{zfg1}, only a few inertial primal-dual dynamical systems %were proposed for the linearly constrained convex optimization problem \eqref{z1}.
In this paper, we consider the following Tikhonov regularized primal-dual dynamical system with time scaling and vanishing damping
\begin{eqnarray}\label{z2}
	\begin{cases}
		\ddot{x}(t)+\frac{\alpha}{t}\dot{x}(t)&=-\beta(t)\left(\nabla f(x(t))+A^T(\lambda(t)+\theta t\dot{\lambda}(t))+\rho A^T(Ax(t)-b)
		+\epsilon(t)x(t)\right),\\
		\ddot{\lambda}(t)+\frac{\alpha}{t}\dot{\lambda}(t)&=\beta(t)\left(A(x(t)+\theta t\dot{x}(t))-b\right),
	\end{cases}
\end{eqnarray}
where $t\geq t_0>0$, $\alpha>0$, $\theta>0$, $\rho\ge 0$, $A^T$ is the adjoint operator of $A$,  $\beta:[t_0, +\infty)\rightarrow (0,+\infty)$ denotes the scaling parameter, and $\epsilon: [t_0, +\infty)\rightarrow [0,+\infty)$  denotes the Tikhonov regularization parameter. 
Throughout this paper, we will make the following standard assumption on the parameters $\alpha,\rho,\beta, \theta$, and $\epsilon$ in System \eqref{z2}:

\begin{assumption}\label{AS-F}
	Suppose that $\epsilon: [t_0,+\infty)\rightarrow [0,+\infty)$ is a  nonincreasing continuously differentiable function satisfyig $\lim_{t\to+\infty}\epsilon(t)=0$, $\beta: [t_0,+\infty)\rightarrow (0,+\infty)$ is a continuously differentiable function, and 
	$$\alpha>1,\quad\theta\ge \frac{1}{\alpha-1}, \quad \rho\ge 0, \quad \Omega\ne\emptyset.$$
\end{assumption}

Throughout this paper, we call the product $\beta(t)\epsilon(t)$  the rescaled regularization paramter, which plays a crucial role in investigating the convergence properties of System \eqref{z2}. When the rescaled regularization paramter decays rapidly to zero, we will  prove that System \eqref{z2} exhibit fast convergence rates in the primal-dual gap, the feasibility violation,  and the objective residual along the trajectory, and that  the  trajectory converges weakly to a primal-dual solution of Problem  \eqref{z1}. These convegence properties are similar to the ones of the original system (without Tikhonov regularization term) considered in the literature. When  the rescaled regularization paramter converges slowly to zero, we will derive the strong convergence of the primal trajectory to the minimal norm solution of Problem \eqref{z1} under suitable conditions. 

\subsection{Related works}
\subsubsection{Inertial dynamical systems for the unconstrained  problem \eqref{zfg1}}

Attached to Problem \eqref{zfg1}, Polyak \cite{Polyak(1964)} investigated the heavy ball with friction system
\begin{eqnarray*}
	\text{(HBF)}\quad\quad 
	\ddot{x}(t)+\gamma \dot{x}(t)+\nabla f(x(t))=0, 
\end{eqnarray*}
where $\gamma>0$ is  a fixed damping coefficient.  To understand the acceleration of   Nesterov’s accelerated gradient algorithm \cite{Nesterov(1983), Nesterov(2013)}, Su et al. \cite{SuBoydandCandes(2016)} proposed the inertial dynamical system with vanishing damping
\begin{eqnarray*}
	\text{(AVD)}\quad\quad
	\ddot{x}(t)+\frac{\alpha}{t} \dot{x}(t)+\nabla f(x(t))=0, 
\end{eqnarray*}
where $\alpha>0$ is a constant.  Since then,  inertial dynamical systems with vanishing damping  $\frac{\alpha}{t}$ have been intensively studied in the literature.  It has been shown in \cite{Attouch2018MP, Wilson2021, ShiBMP,MayLong2015} that the  damping coefficient $\frac{\alpha}{t}$ with $\alpha\ge 3$, vanishing asymptotically, but not too quickly, plays a crucial role in establishing the fast convergence rate $f(x(t))-\min f=\mathcal{O}\left(\frac{1}{t^2}\right)$. On the other hand,  time rescaling technique has been applied to improve the convergence rates of inertial dynamical systems. By introducing a time scalling coefficient $\beta(t)$ to \text{(AVD)}, Attouch et al. \cite{AttouchCRF2019} proved the inertial dynamical sysytem with time scaling
\begin{eqnarray*}
	\ddot{x}(t)+\frac{\alpha}{t} \dot{x}(t)+\beta(t)\nabla f(x(t))=0
\end{eqnarray*}
with $\alpha\ge 3$ exhibits an improved convergence rate $f(x(t))-\min f=\mathcal{O}\left(\frac{1}{t^2\beta(t)}\right)$ under the scaling condition $t\dot{\beta}(t)\leq(\alpha-3)\beta(t)$. More results on inertial dynamical systems with scaling coefficients can be found in \cite{Wilson2021,WibisonoWJA2016,FazlyabKPA2017,AttouchCRFC2019}.

In past decades,  Tikhonov regularized inertial dynamical systems for  Problem \eqref{zfg1} have been intensively investigated.  Attouch and Czarnecki \cite{AttouchandCzarnecki(2002)} considered the Tikhonov regularized version of the heavy ball with friction system $\text{(HBF)}$
\begin{eqnarray*}
	\text{(HBF)}_\epsilon \quad\quad \ddot{x}(t)+\gamma \dot{x}(t)+\nabla f(x(t))+\epsilon(t)x(t)=0,  
\end{eqnarray*}
where $\epsilon:[0,+\infty)\to[0,+\infty)$ is the Tikhonov regularization parameter. When $\int_{0}^{+\infty}\epsilon(t)dt=+\infty$ which reflects the case the parameter $\epsilon(t)$ vanishes slowly, the trajectory generated by $\text{(HBF)}_\epsilon$ converges strongly to the minimal norm solution of Problem \eqref{zfg1}. By introducing a Tikhonov regularization term to  $\text{(AVD)}$,  Attouch et al. \cite{AttouchZH2018} developed the following Tikhonov regularized dynamical system 
\begin{eqnarray*}
	\text{(AVD)}_{\epsilon}\quad\quad
	\ddot{x}(t)+\frac{\alpha}{t} \dot{x}(t)+\nabla f(x(t))+\epsilon(t)x(t)=0.
\end{eqnarray*} 
When $\epsilon(t)$ decreases rapidly to zero,  $\text{(AVD)}_{\epsilon}$ enjoys  fast convergence properties similar to $\text{(AVD)}$.  When $\epsilon(t)$ decays slowly to zero, the trajectory generated by $\text{(AVD)}_{\epsilon}$  converges strongly to the minimal norm solution of  Problem \eqref{zfg1}. For more results about Tikhonov regularized inertial dynamical systems for Problem \eqref{zfg1}, we refer the reader to \cite{AttouchZH2018,AttouchandSzilardLaszlo(2021),Laszlo-JDE,XuWen2021}.

\subsubsection{Inertial primal-dual dynamical systems for the linearly constrained problem \eqref{z1}}

To extend  the work of  Su et al. \cite{SuBoydandCandes(2016)} to  the linearly constrained problem \eqref{z1},  Zeng et al. \cite{ZengXLandLeiJLandChenJ(2022)} introduced  the inertial primal-dual  dynamical system with vanishing damping  
\begin{eqnarray*}
	\text{(Z-AVD)}\quad
	\begin{cases}
		\ddot{x}(t)+\frac{\alpha}{t}\dot{x}(t)&=-\nabla f(x(t))-A^T(\lambda(t)+\theta t\dot{\lambda}(t))-\rho A^T(Ax(t)-b),\\
		\ddot{\lambda}(t)+\frac{\alpha}{t}\dot{\lambda}(t)&=A(x(t)+\theta t\dot{x}(t))-b,
	\end{cases}
\end{eqnarray*}
where $t\ge t_0>0$, $\alpha>0$, $\rho\ge 0$, and $\theta>0$, and  proved  the  $\mathcal{O}\left(\frac{1}{t^2}\right)$ convergence rate of primal-dual gap and the  $\mathcal{O}\left(\frac{1}{t}\right)$ convergence rate of feasibility violation along the generated trajectory when $\alpha\ge 3$. Let us mention that  the original system considered  by Zeng et al. \cite{ZengXLandLeiJLandChenJ(2022)} is  \text{(Z-AVD)} with $\rho=1$.  He et al. \cite{HehufangSIAM2021} and Attouch et al. \cite{AttouchADMM(2022)} extended the work of Zeng et al. \cite{ZengXLandLeiJLandChenJ(2022)} 
by investigating inertial primal-dual dynamical systems with  general damping coefficients and time scaling coefficients for Problem \eqref{z1} with a separable structure.   Bot and Nguyen \cite{BNguyen2022} improved the results of Zeng et al. \cite{ZengXLandLeiJLandChenJ(2022)}
%\begin{eqnarray*}\label{botne1}
%	\text{(PD-AVD)}\quad
%	\begin{cases}
%		\ddot{x}(t)+\frac{\alpha}{t}\dot{x}(t)+\nabla f(x(t))+A^T(\lambda(t)+\theta t\dot{\lambda}(t))+\rho A^T(Ax(t)-b)
%		&=0,\\
%		\ddot{\lambda}(t)+\frac{\alpha}{t}\dot{\lambda}(t)-\left(A(x(t)+\theta t\dot{x}(t))-b\right)&=0,
%	\end{cases}
%\end{eqnarray*}
by establshing the $\mathcal{O}\left(\frac{1}{t^2}\right)$ fast convergence rate of the primal-dual gap, the objective residual and the feasibility violation along the trajectory of $\text{(Z-AVD)}$ with $\alpha\ge 3$. Further,  Bot and Nguyen \cite{BNguyen2022} first showed that the primal-dual trajectory of $\text{(Z-AVD)}$ with $\alpha>3$ converges weakly to a primal-dual optimal solution of Problem \eqref{z1}. This result was not addressed in \cite{ZengXLandLeiJLandChenJ(2022),HehufangSIAM2021, AttouchADMM(2022)}. By introducing a scaling coefficient $\beta(t)$ to $\text{(Z-AVD)}$, Hulett and Nguyen \cite{HulettNeuyen2023} considered the  inertial  primal-dual dynamical system with time scaling
\begin{eqnarray*}\label{beta2}
	\text{(HN-AVD)}\quad
	\begin{cases}
		\ddot{x}(t)+\frac{\alpha}{t}\dot{x}(t)+\beta(t)\left(\nabla f(x(t))+A^T(\lambda(t)+\theta t\dot{\lambda}(t))+\rho A^T(Ax(t)-b)\right)
		&=0,\\
		\ddot{\lambda}(t)+\frac{\alpha}{t}\dot{\lambda}(t)-\beta(t)\left(A(x(t)+\theta t\dot{x}(t))-b\right)&=0.
	\end{cases}
\end{eqnarray*}
Under the scaling condition
\begin{eqnarray}\label{S-C}
	t\dot{\beta}(t)\leq\frac{1-2\theta}{\theta}\beta(t),\qquad \forall t\ge t_0.
	\end{eqnarray}
they showed  that $\text{(HN-AVD)}$ with $\alpha\ge 3$ owns an improved  convergence rate $\mathcal{O}\left(\frac{1}{t^2\beta(t)}\right)$ of  the primal-dual gap, the objective residual and the feasibility violation along the trajectory. Under the strict scaling condition
\begin{eqnarray}\label{SS-C}
	\sup_{t\geq t_0}\frac{t\dot{\beta}(t)}{\beta(t)}<\frac{1-2\theta}{\theta},
	\end{eqnarray}
Hulett and Nguyen \cite{HulettNeuyen2023} further proved that the trajectory generated by $\text{(HN-AVD)}$ with $\alpha>3$ converges weakly to a primal-dual  solution of Problem \eqref{z1}. 
He et al. \cite{HeHFiietal(2022)} proposed a  second order plus first order  inertial primal-dual dynamical systems with time scaling and vanishing damping for Problem \eqref{z1}. Zhu et al. \cite{zhuhufang1}  proposed a Tikhonov regularized second order plus first order primal-dual dynamical system 
\begin{eqnarray*}
	\text{(ZHF-AVD)}\quad
	\begin{cases}
		\ddot{x}(t)+\frac{\alpha}{t}\dot{x}(t)&=-\nabla f(x(t))-A^T\lambda(t)-\rho A^T(Ax(t)-b)
		-\epsilon(t)x(t),\\
		\dot{\lambda}(t)&=t(A(x(t)+\dfrac{t}{\alpha-1}\dot{x}(t))-b),
	\end{cases}
\end{eqnarray*}
where $\alpha\ge 3$, $\rho\ge 0$, and $\epsilon: [t_0, +\infty)\rightarrow [0,+\infty)$ is a Tikhonov regularization parameter. It has been shown in \cite{zhuhufang1} that \text{(ZHF-AVD)} exhibits the $\mathcal{O}\left(\frac{1}{t^2}\right)$ fast convergence rate of the primal-dual gap, the feasibility violation and the objective residual along the trajectory  when $\epsilon(t)$ decreases rapidly to zero,  while  the primal trajectory of $\text{(ZHF-AVD)}$ converges strongly to the minimal norm solution of Problem \eqref{z1}  when $\epsilon(t)$ decreases slowly to zero.  For more results on primal-dual inertial dynamical systems, we refer the reader to \cite{HeHUFang2022sf,HHF-AA, HTLF-2023,Liuxw}. 

\subsection{Organization}

In Section 2, we discuss general minimization properties of the trajectory generated by System \eqref{z2}. Section 3 is devoted to investigating fast convergence rate results of the primal-dual gap, the feasibility violation and the objective residual along the trajectory generated by System \eqref{z2}. In Section 4, we   study the weak convergence of the trajectory to a primal\mbox{-}dual  solution of Problem \eqref{z1}. In Section 5, we prove strong convergence of the primal trajectory  to the minimal norm solution of Problem \eqref{z1}. In Section 5, we perform numerical experiments to illustrate the efficiency of System \eqref{z2}. Finally, we list some lemmas  in Appendix.

\section{Minimization properties}

In this section, we  analyze general minimization properties of the trajectory  generated by System \eqref{z2} under the scaling condition \eqref{S-C} and 
\begin{equation}\label{intbe}
\int_{t_{0}}^{+\infty}\frac{\beta(t)\epsilon(t)}{t}dt<+\infty.
\end{equation}
Besides their own interest, the results in this section will be used in Section 5 to prove strong convergence of the primal trajectory of System \eqref{z2}. Throughout this paper, we always suppose that  System \eqref{z2} admits a unique strong global solution, which can be guaranteed by standard  arguments in terms of the Cauchy-Lipschitz theorem. 

We first  give a lemma which will be used repeatedly in the convergence analysis.

\begin{lemma}\label{lemma21}
Suppose that Assumption \ref{AS-F} holds and  $(x^*,\lambda^*)\in\Omega$. Let $(x,\lambda): [t_{0},+\infty)\rightarrow \mathcal{X}\times\mathcal{Y}$ be a solution of System \eqref{z2}. Define $\mathcal{E}:[t_0,+\infty)\to [0,+\infty)$ by
\begin{eqnarray}\label{zs3}
	\mathcal{E}(t)&=&\theta^2 t^2\beta(t)\left(\mathcal{L}_{\rho}(x(t),\lambda^*)-\mathcal{L}_{\rho}(x^*,\lambda^*)+\frac{\epsilon(t)}{2}\|x(t)\|^2\right)
	+\frac{1}{2}\|x(t)-x^*+\theta t\dot{x}(t)\|^2\nonumber\\
	&&+\frac{1}{2}\|\lambda(t)-\lambda^*+\theta t\dot{\lambda}(t)\|^2+\frac{(\alpha-1)\theta-1}{2}\|x(t)-x^*\|^2\nonumber\\
	&&+\frac{(\alpha-1)\theta-1}{2}\|\lambda(t)-\lambda^*\|^2.
\end{eqnarray}
For every $t\geq t_{0}$, it holds
\begin{eqnarray*}
	\dot{\mathcal{E}}(t)&\leq&((2\theta-1)\theta t\beta(t)+\theta^2t^2\dot{\beta}(t))\left(\mathcal{L}_{\rho}(x(t),\lambda^*)-\mathcal{L}_{\rho}(x^*,\lambda^*)\right)\nonumber\\
	&&+\frac{1}{2}\left(((2\theta-1)\theta t\beta(t)+\theta^2t^2\dot{\beta}(t))\epsilon(t)+\theta^2t^2\beta(t)\dot{\epsilon}(t)\right)\|x(t)\|^2+\frac{\theta t\beta(t)\epsilon(t)}{2}\|x^*\|^2\\
	&&+(1+(1-\alpha)\theta)\theta t(\|\dot{x}(t)\|^2+\|\dot{\lambda}(t)\|^2)
	-\frac{\theta t\beta(t)\epsilon(t)}{2}\|x(t)-x^*\|^2\\
	&&-\frac{\rho\theta t\beta(t)}{2}\|Ax(t)-b\|^2, \qquad\forall t\geq t_0.
\end{eqnarray*}	
\end{lemma}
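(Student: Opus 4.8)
The plan is to carry out a direct Lyapunov computation: differentiate $\mathcal{E}(t)$, substitute the second-order derivatives $\ddot x(t)$ and $\ddot\lambda(t)$ from System \eqref{z2}, and then exploit the convexity of $f$ together with the saddle-point relations \eqref{zcs1} to bound the result from above. Throughout I abbreviate $h(t)=\theta^2t^2\beta(t)$, $\Phi(t)=\mathcal{L}_\rho(x(t),\lambda^*)-\mathcal{L}_\rho(x^*,\lambda^*)+\tfrac{\epsilon(t)}{2}\|x(t)\|^2$, $u=x-x^*+\theta t\dot x$, $v=\lambda-\lambda^*+\theta t\dot\lambda$, and $\kappa:=(\alpha-1)\theta-1$, so that the last two quadratic terms of $\mathcal{E}$ read $\tfrac{\kappa}{2}\|x-x^*\|^2+\tfrac{\kappa}{2}\|\lambda-\lambda^*\|^2$.

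First I would differentiate the five groups of terms in $\mathcal{E}(t)$. The chain rule gives $\dot\Phi(t)=\langle\nabla_x\mathcal{L}_\rho(x,\lambda^*)+\epsilon(t)x,\dot x\rangle+\tfrac{\dot\epsilon(t)}{2}\|x\|^2$, while differentiating $\tfrac12\|u\|^2$ and $\tfrac12\|v\|^2$ produces the velocities $\dot u=(1+\theta)\dot x+\theta t\ddot x$ and $\dot v=(1+\theta)\dot\lambda+\theta t\ddot\lambda$. Substituting the two equations of \eqref{z2} turns $\theta t\ddot x$ and $\theta t\ddot\lambda$ into expressions in which the damping contributes the coefficient $1+\theta-\alpha\theta=-\kappa$, so that $\dot u=-\kappa\dot x-\theta t\beta(t)\bigl(\nabla_x\mathcal{L}_\rho(x,\lambda)+\theta t A^T\dot\lambda+\epsilon(t)x\bigr)$ and $\dot v=-\kappa\dot\lambda+\theta t\beta(t)\bigl(Ax-b+\theta t A\dot x\bigr)$.

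The central step is to observe two families of exact cancellations. Rewriting $\nabla_x\mathcal{L}_\rho(x,\lambda)=\nabla_x\mathcal{L}_\rho(x,\lambda^*)+A^T(\lambda-\lambda^*)$, the primal equation injects the coupling terms $-\theta t\beta(t)A^T(\lambda-\lambda^*)$ and $-\theta^2t^2\beta(t)A^T\dot\lambda$, while the dual equation injects the symmetric terms $\theta t\beta(t)(Ax-b)$ and $\theta^2t^2\beta(t)A\dot x$. Pairing these against $u$ and $v$, using $\langle A^T\cdot,\cdot\rangle=\langle\cdot,A\cdot\rangle$ and $Ax^*=b$, all four mixed inner products cancel in antisymmetric pairs — this is the structural primal-dual cancellation that makes the system work. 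A second cancellation removes the terms $\pm\theta^2t^2\beta(t)\langle\nabla_x\mathcal{L}_\rho(x,\lambda^*),\dot x\rangle$ and $\pm\theta^2t^2\beta(t)\epsilon(t)\langle x,\dot x\rangle$ arising respectively from $h(t)\dot\Phi(t)$ and from the velocity terms. Finally, the velocity contributions carrying $-\kappa$ combine with the derivatives $\kappa\langle\dot x,x-x^*\rangle+\kappa\langle\dot\lambda,\lambda-\lambda^*\rangle$ of the last two terms of $\mathcal{E}$ to leave exactly $(1+(1-\alpha)\theta)\theta t(\|\dot x\|^2+\|\dot\lambda\|^2)$, since $-\kappa\theta t=(1+(1-\alpha)\theta)\theta t$.

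What survives is $\dot h(t)\Phi(t)$, the gradient term $-\theta t\beta(t)\langle\nabla_x\mathcal{L}_\rho(x,\lambda^*),x-x^*\rangle$, the regularization term $-\theta t\beta(t)\epsilon(t)\langle x,x-x^*\rangle$, and $\tfrac12\theta^2t^2\beta(t)\dot\epsilon(t)\|x\|^2$. Here I would invoke the refined convexity estimate: expanding the gradient pairing and using $A(x-x^*)=Ax-b$ and the convexity bound $\langle\nabla f(x),x-x^*\rangle\ge f(x)-f(x^*)$ yields $\langle\nabla_x\mathcal{L}_\rho(x,\lambda^*),x-x^*\rangle\ge \mathcal{L}_\rho(x,\lambda^*)-\mathcal{L}_\rho(x^*,\lambda^*)+\tfrac{\rho}{2}\|Ax-b\|^2$, which after multiplication by $-\theta t\beta(t)<0$ produces the gap coefficient together with the term $-\tfrac{\rho\theta t\beta(t)}{2}\|Ax-b\|^2$. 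Applying the identity $\langle x,x-x^*\rangle=\tfrac12\|x\|^2+\tfrac12\|x-x^*\|^2-\tfrac12\|x^*\|^2$ to the regularization term supplies the $\tfrac{\theta t\beta(t)\epsilon(t)}{2}\|x^*\|^2$ and $-\tfrac{\theta t\beta(t)\epsilon(t)}{2}\|x-x^*\|^2$ contributions, and collecting the coefficients of the gap and of $\|x\|^2$ — combining $\dot h(t)$, the surviving piece of $h(t)\dot\Phi(t)$, and the $\tfrac{\epsilon}{2}$ terms — reproduces $(2\theta-1)\theta t\beta(t)+\theta^2t^2\dot\beta(t)$ in both places. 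The main obstacle is purely the careful bookkeeping of these cancellations, in particular forcing the antisymmetric coupling terms to vanish and extracting the sharp convexity inequality that generates the $\rho$-dependent term; notably, only convexity of $f$ and the optimality conditions \eqref{zcs1} are needed, with no appeal to the Lipschitz property of $\nabla f$ or to any compactness.
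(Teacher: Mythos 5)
Your proposal is correct and follows essentially the same route as the paper's proof: differentiate $\mathcal{E}$, substitute the equations of \eqref{z2}, exploit the antisymmetric primal--dual cancellations and the cancellation of the $\theta^2t^2\beta(t)\langle\cdot,\dot{x}(t)\rangle$ terms, and then bound the surviving term $-\theta t\beta(t)\langle\nabla_{x}\mathcal{L}_{\rho}(x(t),\lambda^*)+\epsilon(t)x(t),\,x(t)-x^*\rangle$ by convexity and \eqref{zcs1}. The only cosmetic difference is that the paper packages your ``convexity of $f$ plus polarization identity'' step as the $\epsilon(t)$-strong convexity of $f+\frac{\epsilon(t)}{2}\|\cdot\|^2$, which yields the identical estimate.
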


\begin{proof}
By using the definition of $\mathcal{E}(t)$, we have 
\begin{eqnarray*}
	\dot{\mathcal{E}}(t)&=&(2\theta^2t\beta(t)+\theta^2t^2\dot{\beta}(t))\left(\mathcal{L}_{\rho}(x(t),\lambda^*)-\mathcal{L}_{\rho}(x^*,\lambda^*)+\frac{\epsilon(t)}{2}\|x(t)\|^2\right)\nonumber\\
	&&+\theta^2 t^2\beta(t)\left(\langle \nabla_{x}\mathcal{L}_{\rho}(x(t),\lambda^*)+\epsilon(t)x(t), \dot{x}(t)\rangle+\frac{\dot{\epsilon}(t)}{2}\|x(t)\|^2\right)\nonumber\\
	&&+\langle x(t)-x^*+\theta t\dot{x}(t), (1+\theta)\dot{x}(t)+\theta t\ddot{x}(t)\rangle+\langle \lambda(t)-\lambda^*+\theta t\dot{\lambda}(t), (1+\theta)\dot{\lambda}(t)+\theta t\ddot{\lambda}(t)\rangle\nonumber\\
	&&+((\alpha-1)\theta-1)\langle x(t)-x^*, \dot{x}(t)\rangle
	+((\alpha-1)\theta-1)\langle \lambda(t)-\lambda^*, \dot{\lambda}(t)\rangle.
\end{eqnarray*}
From \eqref{z2} and the definition of $\mathcal{L}_{\rho}$, we get
\begin{eqnarray*}
	&&\langle x(t)-x^*+\theta t\dot{x}(t), (1+\theta)\dot{x}(t)+\theta t\ddot{x}(t)\rangle\\
	&&\quad=\langle x(t)-x^*+\theta t\dot{x}(t), (1+(1-\alpha)\theta)\dot{x}(t)-\theta t\beta(t)(\nabla_{x}\mathcal{L}_{\rho}(x(t),\lambda(t)+\theta t\dot{\lambda}(t))+\epsilon(t)x(t))\rangle\\
	&&\quad=\langle x(t)-x^*+\theta t\dot{x}(t), (1+(1-\alpha)\theta)\dot{x}(t)-\theta t\beta(t)(\nabla_{x}\mathcal{L}_{\rho}(x(t),\lambda^*)+\epsilon(t)x(t))\\
	&&\qquad-\theta t\beta(t)A^T(\lambda(t)-\lambda^*+\theta t\dot{\lambda}(t))\rangle\\
	&&\quad=(1+(1-\alpha)\theta)\langle x(t)-x^*, \dot{x}(t)\rangle+(1+(1-\alpha)\theta)\theta t\|\dot{x}(t)\|^2\\
	&&\qquad-\theta t\beta(t)\langle x(t)-x^*, \nabla_{x}\mathcal{L}_{\rho}(x(t),\lambda^*)+\epsilon(t)x(t)\rangle-\theta^2t^2\beta(t)\langle \dot{x}(t), \nabla_{x}\mathcal{L}_{\rho}(x(t),\lambda^*)+\epsilon(t)x(t)\rangle\\
	&&\qquad-\theta t\beta(t)\langle A(x(t)+\theta t\dot{x}(t))-b, \lambda(t)-\lambda^*+\theta t\dot{\lambda}(t)\rangle
\end{eqnarray*}
and 
\begin{eqnarray*}
	&&\langle \lambda(t)-\lambda^*+\theta t\dot{\lambda}(t), (1+\theta)\dot{\lambda}(t)+\theta t\ddot{\lambda}(t)\rangle\\
	&&\quad=\langle \lambda(t)-\lambda^*+\theta t\dot{\lambda}(t), (1+(1-\alpha)\theta)\dot{\lambda}(t)+\theta t\beta(t)(A(x(t)+\theta t\dot{x}(t))-b)\rangle\\
	&&\quad=(1+(1-\alpha)\theta)\langle \lambda(t)-\lambda^*, \dot{\lambda}(t)\rangle+(1+(1-\alpha)\theta)\theta t\|\dot{\lambda}(t)\|^2\\
	&&\qquad+\theta t\beta(t)\langle \lambda(t)-\lambda^*+\theta t\dot{\lambda}(t), A(x(t)+\theta t\dot{x}(t))-b)\rangle.
\end{eqnarray*}
As a consequence,
\begin{eqnarray*}
	\dot{\mathcal{E}}(t)&=&(2\theta^2t\beta(t)+\theta^2t^2\dot{\beta}(t))\left(\mathcal{L}_{\rho}(x(t),\lambda^*)-\mathcal{L}_{\rho}(x^*,\lambda^*)+\frac{\epsilon(t)}{2}\|x(t)\|^2\right)\nonumber\\
	&&+\frac{\theta^2t^2\beta(t)\dot{\epsilon}(t)}{2}\|x(t)\|^2\nonumber+(1+(1-\alpha)\theta)\theta t(\|\dot{x}(t)\|^2+\|\dot{\lambda}(t)\|^2)\\
	&&-\theta t\beta(t)\langle x(t)-x^*, \nabla_{x}\mathcal{L}_{\rho}(x(t),\lambda^*)+\epsilon(t)x(t)\rangle.
\end{eqnarray*}
Because $f(x)+\frac{\epsilon(t)}{2}\|x\|^2$ is an $\epsilon(t)$\mbox{-}strongly convex function,
\begin{eqnarray*}
	f(x^*)+\frac{\epsilon(t)}{2}\|x^*\|^2-f(x(t))-\frac{\epsilon(t)}{2}\|x(t)\|^2
	&\geq&\langle \nabla f(x(t))+\epsilon(t)x(t), x^*-x(t)\rangle\\
	&&+\frac{\epsilon(t)}{2}\|x(t)-x^*\|^2,
\end{eqnarray*}
which together the definition of  $\mathcal{L}_{\rho}$ implies 
\begin{eqnarray*}\label{zs5}
	&&-\theta t \beta(t)\langle\nabla_{x}\mathcal{L}_{\rho}(x(t),\lambda^*)+\epsilon(t)x(t), x(t)-x^*\rangle\nonumber\\
	&&\quad=\theta t\beta(t)\langle\nabla_{x}\mathcal{L}_{\rho}(x(t),\lambda^*), x^*-x(t)\rangle+\theta t\beta(t)\epsilon(t)\langle x(t), x^*-x(t)\rangle\nonumber\\
	&&\quad=\theta t\beta(t)\langle \nabla f(x(t))+A^T\lambda^*+\rho A^T(Ax(t)-b),x^*-x(t)\rangle+\theta t \beta(t)\epsilon(t)\langle x(t), x^*-x(t)\rangle\nonumber\\
	&&\quad=\theta t\beta(t)(\langle\nabla f(x(t)), x^*-x(t)\rangle-\langle \lambda^*,Ax(t)-b\rangle
	-\rho\|Ax(t)-b\|^2)
	+\theta t\beta(t)\epsilon(t)\langle x(t), x^*-x(t)\rangle\nonumber\\
	&&\quad\leq\theta t \beta(t)(\mathcal{L}_{\rho}(x^*,\lambda^*)-\mathcal{L}_{\rho}(x(t),\lambda^*))+\frac{\theta t\beta(t)\epsilon(t)}{2}(\|x^*\|^2-\|x(t)\|^2-\|x(t)-x^*\|^2)\\
	&&\qquad-\frac{\rho\theta t\beta(t)}{2}\|Ax(t)-b\|^2,
\end{eqnarray*}
where the last inequality uses \eqref{zcs1} and the convexity of $\mathcal{L}_{\rho}(\cdot,\lambda^*)$.
Then, we can obtain 
\begin{eqnarray*}
	\dot{\mathcal{E}}(t)&\leq&((2\theta-1)\theta t\beta(t)+\theta^2t^2\dot{\beta}(t))\left(\mathcal{L}_{\rho}(x(t),\lambda^*)-\mathcal{L}_{\rho}(x^*,\lambda^*)\right)\nonumber\\
	&&+\frac{1}{2}\left(((2\theta-1)\theta t\beta(t)+\theta^2t^2\dot{\beta}(t))\epsilon(t)+\theta^2t^2\beta(t)\dot{\epsilon}(t)\right)\|x(t)\|^2+\frac{\theta t\beta(t)\epsilon(t)}{2}\|x^*\|^2\\
	&&+(1+(1-\alpha)\theta)\theta t(\|\dot{x}(t)\|^2+\|\dot{\lambda}(t)\|^2)
	-\frac{\theta t\beta(t)\epsilon(t)}{2}\|x(t)-x^*\|^2\\
	&&-\frac{\rho\theta t\beta(t)}{2}\|Ax(t)-b\|^2, \qquad\forall t\geq t_0.
\end{eqnarray*}
\end{proof}
Remark that  Assumption \ref{AS-F} and \eqref{S-C} imply
\begin{eqnarray}\label{zs7}
	\begin{cases}
		1+(1-\alpha)\theta\leq0,\\
		(2\theta-1)\theta t\beta(t)+\theta^2t^2\dot{\beta}(t)\leq0, \qquad \forall t\geq t_0,\\
		((2\theta-1)\theta t\beta(t)+\theta^2t^2\dot{\beta}(t))\epsilon(t)+\theta^2t^2\beta(t)\dot{\epsilon}(t)\leq0, \quad \forall t\geq t_0.
	\end{cases}
\end{eqnarray}

With Lemma \ref{intbe} in hand, we derive  general minimization  properties of the trajectory of  System \eqref{z2}.

\begin{theorem}\label{ztt3.2}
     Suppose that Assumption \ref{AS-F} and  conditions \eqref{S-C} and \eqref{intbe} hold. Let $(x,\lambda): [t_{0},+\infty)\rightarrow \mathcal{X}\times\mathcal{Y}$ be a solution of System \eqref{z2} and $(x^*,\lambda^*)\in\Omega$.
Then the following conclusions hold:
\begin{itemize} 
  \item[(i)] $\mathcal{L}_{\rho}(x(t),\lambda^*)-\mathcal{L}_{\rho}(x^*,\lambda^*)=o\left(\frac{1}{\beta(t)}\right)$.
  \item[(ii)] Further, if $\rho>0$, then
$$\|Ax(t)-b\|^2=o\left(\frac{1}{\beta(t)}\right), \quad |f(x(t))-f(x^*)|=o\left(\frac{1}{\sqrt{\beta(t)}}\right).$$
\end{itemize}
\end{theorem}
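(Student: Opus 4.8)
The plan is to run a Lyapunov argument based on the energy function $\mathcal{E}$ supplied by Lemma \ref{lemma21}, extracting all the sign information already recorded in \eqref{zs7}. First I would inspect the right-hand side of the estimate for $\dot{\mathcal{E}}(t)$ and argue that every term is non-positive except one. Indeed, $\mathcal{L}_{\rho}(x(t),\lambda^*)-\mathcal{L}_{\rho}(x^*,\lambda^*)\ge 0$ because $x^*$ minimizes $\mathcal{L}_{\rho}(\cdot,\lambda^*)$ (recall \eqref{zcs1}), and its coefficient is $\le 0$ by the second line of \eqref{zs7}; the coefficient of $\|x(t)\|^2$ is $\le 0$ by the third line of \eqref{zs7}; the velocity term carries the factor $1+(1-\alpha)\theta\le 0$; and the last two terms are explicitly subtracted. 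Hence the only surviving positive contribution is $\tfrac{\theta t\beta(t)\epsilon(t)}{2}\|x^*\|^2$, giving
$$\dot{\mathcal{E}}(t)\le \frac{\theta t\beta(t)\epsilon(t)}{2}\|x^*\|^2,\qquad \forall t\ge t_0.$$

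Integrating from $t_0$ to $t$ yields $\mathcal{E}(t)\le \mathcal{E}(t_0)+\tfrac{\theta\|x^*\|^2}{2}\int_{t_0}^{t}s\beta(s)\epsilon(s)\,ds$. Here lies the main obstacle: the integrand equals $s^2\cdot\tfrac{\beta(s)\epsilon(s)}{s}$, and condition \eqref{intbe} only controls $\int_{t_0}^{\infty}\tfrac{\beta(s)\epsilon(s)}{s}\,ds<+\infty$, which is strictly weaker than $\int s\beta(s)\epsilon(s)\,ds<+\infty$, so one cannot hope to bound $\mathcal{E}$ outright. Instead I would prove the weaker (and sufficient) statement $\mathcal{E}(t)=o(t^2)$ by a tail-splitting argument: writing $g(s):=\tfrac{\beta(s)\epsilon(s)}{s}\ge 0$ with $\int_{t_0}^{\infty}g<+\infty$, split $\tfrac{1}{t^2}\int_{t_0}^{t}s^2 g(s)\,ds$ at a large threshold $T$. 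The head $\tfrac{1}{t^2}\int_{t_0}^{T}s^2 g(s)\,ds\to 0$ as $t\to+\infty$ for fixed $T$, while the tail obeys $\tfrac{1}{t^2}\int_{T}^{t}s^2 g(s)\,ds\le \int_{T}^{t}g(s)\,ds\le \int_{T}^{\infty}g$, which is arbitrarily small for $T$ large. Therefore $\tfrac{1}{t^2}\int_{t_0}^{t}s\beta(s)\epsilon(s)\,ds\to 0$, so $\mathcal{E}(t)=o(t^2)$, and it is exactly the little-$o$ here that forces the little-$o$ rates in the conclusion rather than a mere $\mathcal{O}$.

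To conclude (i), I would discard the non-negative pieces of $\mathcal{E}$: since $(\alpha-1)\theta-1\ge 0$ (from $\theta\ge\tfrac{1}{\alpha-1}$ in Assumption \ref{AS-F}) and $\tfrac{\epsilon(t)}{2}\|x(t)\|^2\ge 0$, one has $\mathcal{E}(t)\ge \theta^2 t^2\beta(t)\big(\mathcal{L}_{\rho}(x(t),\lambda^*)-\mathcal{L}_{\rho}(x^*,\lambda^*)\big)$, and dividing by $\theta^2 t^2\beta(t)$ together with $\mathcal{E}(t)=o(t^2)$ gives $\mathcal{L}_{\rho}(x(t),\lambda^*)-\mathcal{L}_{\rho}(x^*,\lambda^*)=o\!\left(\tfrac{1}{\beta(t)}\right)$. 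For (ii), assuming $\rho>0$, I would use the saddle-point relation $\nabla f(x^*)=-A^T\lambda^*$ and convexity of $f$ to get $f(x(t))-f(x^*)+\langle\lambda^*,Ax(t)-b\rangle\ge 0$; since $Ax^*=b$ makes $\mathcal{L}_{\rho}(x^*,\lambda^*)=f(x^*)$, this gives
$$\mathcal{L}_{\rho}(x(t),\lambda^*)-\mathcal{L}_{\rho}(x^*,\lambda^*)=\big(f(x(t))-f(x^*)+\langle\lambda^*,Ax(t)-b\rangle\big)+\frac{\rho}{2}\|Ax(t)-b\|^2\ge \frac{\rho}{2}\|Ax(t)-b\|^2.$$
Combined with (i) this yields $\|Ax(t)-b\|^2=o\!\left(\tfrac{1}{\beta(t)}\right)$, hence $\|Ax(t)-b\|=o\!\left(\tfrac{1}{\sqrt{\beta(t)}}\right)$. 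Finally, decomposing $f(x(t))-f(x^*)=\big(\mathcal{L}_{\rho}(x(t),\lambda^*)-\mathcal{L}_{\rho}(x^*,\lambda^*)\big)-\langle\lambda^*,Ax(t)-b\rangle-\tfrac{\rho}{2}\|Ax(t)-b\|^2$ and bounding the middle term by Cauchy–Schwarz as $|\langle\lambda^*,Ax(t)-b\rangle|\le\|\lambda^*\|\,\|Ax(t)-b\|=o\!\left(\tfrac{1}{\sqrt{\beta(t)}}\right)$, the feasibility term dominates the two $o\!\left(\tfrac{1}{\beta(t)}\right)$ contributions, so $|f(x(t))-f(x^*)|=o\!\left(\tfrac{1}{\sqrt{\beta(t)}}\right)$.
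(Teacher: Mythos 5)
Your proposal is correct and takes essentially the same route as the paper: the same energy estimate $\dot{\mathcal{E}}(t)\le\frac{\theta\|x^*\|^2}{2}\,t\beta(t)\epsilon(t)$ obtained from Lemma \ref{lemma21} and the sign conditions \eqref{zs7}, integration, the conclusion $\mathcal{E}(t)=o(t^2)$, and then the same lower bounds on $\mathcal{E}$ and on the Lagrangian gap (via $\mathcal{L}_{\rho}(x(t),\lambda^*)-\mathcal{L}_{\rho}(x^*,\lambda^*)\ge\frac{\rho}{2}\|Ax(t)-b\|^2$) to extract (i) and (ii). The only difference is that where the paper invokes Lemma \ref{lemma2.2.7} with $\psi(t)=t^2$ and $\phi(t)=\beta(t)\epsilon(t)/t$ to conclude $\frac{1}{t^2}\int_{t_0}^{t}s\beta(s)\epsilon(s)\,ds\to 0$, you prove that fact inline by a tail-splitting argument, which is simply a self-contained proof of that cited lemma.
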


\begin{proof}
By Lemma \ref{lemma21},
\begin{eqnarray}\label{zr2}
	\frac{\dot{\mathcal{E}}(t)}{\theta^2 t^2}&\leq&\frac{(2\theta-1)\theta t\beta(t)+\theta^2t^2\dot{\beta}(t)}{\theta^2 t^2}\left(\mathcal{L}_{\rho}(x(t),\lambda^*)-\mathcal{L}_{\rho}(x^*,\lambda^*)\right)\nonumber\\
	&&+\frac{1}{2\theta^2 t^2}\left(((2\theta-1)\theta t\beta(t)+\theta^2t^2\dot{\beta}(t))\epsilon(t)+\theta^2t^2\beta(t)\dot{\epsilon}(t)\right)\|x(t)\|^2+\frac{\beta(t)\epsilon(t)}{2\theta t}\|x^*\|^2\nonumber\\
	&&+\frac{1+(1-\alpha)\theta}{\theta t}(\|\dot{x}(t)\|^2+\|\dot{\lambda}(t)\|^2)
	-\frac{\beta(t)\epsilon(t)}{2\theta t}\|x(t)-x^*\|^2\nonumber\\
	&&-\frac{\rho\beta(t)}{2\theta t}\|Ax(t)-b\|^2, \qquad\forall t\geq t_0.
\end{eqnarray}
This together with \eqref{zs7} yields
\begin{eqnarray*}
	\dot{\mathcal{E}}(t)\leq\frac{\theta\|x^*\|^2}{2}t\beta(t)\epsilon(t), \qquad\forall t\geq t_0.
\end{eqnarray*}
Integrating it over $[t_{0},t]$, we obtain
\begin{eqnarray*}
	\mathcal{E}(t)\leq \mathcal{E}(t_0)+\frac{\theta\|x^*\|^2}{2}\int_{t_{0}}^{t}s\beta(s)\epsilon(s)ds.
\end{eqnarray*}
Dividing its both sides by $\theta^2 t^2$, we have 
\begin{eqnarray}\label{zd25}
\frac{\mathcal{E}(t)}{\theta^2 t^2}\leq \frac{\mathcal{E}(t_0)}{\theta^2 t^2}+\frac{\theta\|x^*\|^2}{2\theta^2 t^2}\int_{t_{0}}^{t}s^2\frac{\beta(s)\epsilon(s)}{s}ds.
\end{eqnarray}
Applying Lemma \ref{lemma2.2.7} with $\psi(t)=t^2$ and $\phi(t)=\frac{\beta(t)\epsilon(t)}{t}$ to \eqref{zd25}, we get
\begin{eqnarray*}
\lim_{t\rightarrow+\infty}\frac{1}{t^{2}}\int_{t_0}^{t}s^{2}\frac{\beta(s)\epsilon(s)}{s}ds=0.
\end{eqnarray*}
Since $\mathcal{E}(t)\geq0$, from \eqref{zd25} we have  
\begin{eqnarray*}
	\lim_{t\rightarrow+\infty}\frac{\mathcal{E}(t)}{\theta^2 t^2}=0,
\end{eqnarray*}
which together with \eqref{zs3}  implies
\begin{eqnarray*}
\lim_{t\rightarrow+\infty}\beta(t)\left(\mathcal{L}_{\rho}(x(t),\lambda^*)-\mathcal{L}_{\rho}(x^*,\lambda^*)\right)=0,
\end{eqnarray*} 
Using the definition of $\mathcal{L}_{\rho}$ and $\rho>0$, we have 
	$$\|Ax(t)-b\|^2=o\left(\frac{1}{\beta(t)}\right), \quad |f(x(t))-f(x^*)|=o\left(\frac{1}{\sqrt{\beta(t)}}\right).$$
\end{proof}

When  $\beta(t)$ and $\epsilon(t)$ are taken such that $\beta(t)\epsilon(t)=\frac{c}{t^r}$ with $0<r\leq2$ and $c>0$, the condition \eqref{intbe} holds.  In this case, we can  imrove Theorem \ref{ztt3.2} by etablishing some convergence rate results depending upon $r$ and $\beta(t)$.

\begin{theorem}\label{ztt3.3}
 Suppose that Assumption \ref{AS-F} with $\rho>0$ and  conditions \eqref{S-C} hold, and $\beta(t)\epsilon(t)=\frac{c}{t^r}$ with $0<r\leq2$ and $c>0$. Let $(x,\lambda): [t_{0},+\infty)\rightarrow \mathcal{X}\times\mathcal{Y}$ be a solution of  System \eqref{z2} and $(x^*,\lambda^*)\in\Omega$.
Then the following conclusions hold:
	\begin{itemize}
		\item[(i)] If $0<r<2$ and $\lim_{t\to+\infty}t^r\beta(t)=+\infty$, then the trajectory $(x(t))_{t\geq t_{0}}$ is bounded, and 
		$$\mathcal{L}(x(t),\lambda^*)-\mathcal{L}(x^*,\lambda^*)=\mathcal{O}\left(\frac{1}{t^r\beta(t)}\right), \quad |f(x(t))-f(x^*)|=\mathcal{O}\left(\frac{1}{\sqrt{t^r\beta(t)}}\right),$$
		$$\|Ax(t)-b\|=\mathcal{O}\left(\frac{1}{\sqrt{t^r\beta(t)}}\right),\quad\left\|\dot{x}(t)\right\|=\mathcal{O}\left(\frac{1}{\sqrt{t^r}}\right).$$
		\item[(ii)] If $r=2$ and $\lim_{t\to+\infty}\frac{t^2\beta(t)}{\ln t}=+\infty$,  then
		$$\mathcal{L}(x(t),\lambda^*)-\mathcal{L}(x^*,\lambda^*)=\mathcal{O}\left(\frac{\ln t}{t^2\beta(t)}\right),$$
		$$|f(x(t))-f(x^*)|=\mathcal{O}\left(\frac{\sqrt{\ln t}}{t\sqrt{\beta(t)}}\right), \quad \|Ax(t)-b\|=\mathcal{O}\left(\frac{\sqrt{\ln t}}{t\sqrt{\beta(t)}}\right).$$
	\end{itemize}
\end{theorem}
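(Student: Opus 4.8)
The plan is to reuse the Lyapunov function $\mathcal{E}$ from \eqref{zs3} together with the differential inequality $\dot{\mathcal{E}}(t)\le \frac{\theta\|x^*\|^2}{2}t\beta(t)\epsilon(t)$ already derived in the proof of Theorem \ref{ztt3.2} (which combines Lemma \ref{lemma21} with the sign conditions \eqref{zs7}), but now to integrate it against the explicit choice $\beta(t)\epsilon(t)=c/t^r$. Substituting gives $\dot{\mathcal{E}}(t)\le \frac{c\theta\|x^*\|^2}{2}t^{1-r}$, and integrating over $[t_0,t]$ produces two regimes: for $0<r<2$ one obtains $\mathcal{E}(t)\le \mathcal{E}(t_0)+\frac{c\theta\|x^*\|^2}{2(2-r)}(t^{2-r}-t_0^{2-r})=\mathcal{O}(t^{2-r})$, while for $r=2$ the primitive of $t^{-1}$ yields the logarithm, $\mathcal{E}(t)=\mathcal{O}(\ln t)$. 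Everything else is read off from the fact that each individual nonnegative summand in \eqref{zs3} is bounded above by $\mathcal{E}(t)$, since Assumption \ref{AS-F} forces $(\alpha-1)\theta-1\ge0$.

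The decisive observation for part (i) is that the Tikhonov block $\theta^2t^2\beta(t)\frac{\epsilon(t)}{2}\|x(t)\|^2=\frac{c\theta^2}{2}t^{2-r}\|x(t)\|^2$ sits inside $\mathcal{E}(t)$; dividing the bound $\mathcal{E}(t)=\mathcal{O}(t^{2-r})$ by $t^{2-r}$ immediately gives $\|x(t)\|^2=\mathcal{O}(1)$, i.e. boundedness of the primal trajectory. Next, since $x^*$ is a global minimizer of the convex map $\mathcal{L}_\rho(\cdot,\lambda^*)$ (by \eqref{zcs1}, $\nabla_x\mathcal{L}_\rho(x^*,\lambda^*)=0$), the gap $\mathcal{L}_\rho(x(t),\lambda^*)-\mathcal{L}_\rho(x^*,\lambda^*)\ge0$ is itself dominated by $\mathcal{E}(t)/(\theta^2t^2\beta(t))$, giving the rate $\mathcal{O}(1/(t^r\beta(t)))$. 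To split this into the feasibility and objective contributions I would write $\mathcal{L}_\rho(x(t),\lambda^*)-\mathcal{L}_\rho(x^*,\lambda^*)=[\mathcal{L}(x(t),\lambda^*)-\mathcal{L}(x^*,\lambda^*)]+\frac{\rho}{2}\|Ax(t)-b\|^2$ (using $Ax^*=b$); both bracketed terms are nonnegative, so each inherits the $\mathcal{O}(1/(t^r\beta(t)))$ bound, which gives $\|Ax(t)-b\|=\mathcal{O}(1/\sqrt{t^r\beta(t)})$. The objective residual then follows from $f(x(t))-f(x^*)=[\mathcal{L}(x(t),\lambda^*)-\mathcal{L}(x^*,\lambda^*)]-\langle\lambda^*,Ax(t)-b\rangle$ and Cauchy--Schwarz, the feasibility term $\|\lambda^*\|\,\|Ax(t)-b\|=\mathcal{O}(1/\sqrt{t^r\beta(t)})$ being the slower of the two (here the hypothesis $t^r\beta(t)\to+\infty$ guarantees $1/(t^r\beta(t))=o(1/\sqrt{t^r\beta(t)})$). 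Finally, the velocity estimate comes from $\frac12\|x(t)-x^*+\theta t\dot x(t)\|^2\le\mathcal{E}(t)=\mathcal{O}(t^{2-r})$: by the triangle inequality and boundedness of $x(t)-x^*$, one gets $\theta t\|\dot x(t)\|=\mathcal{O}(t^{(2-r)/2})$, hence $\|\dot x(t)\|=\mathcal{O}(t^{-r/2})=\mathcal{O}(1/\sqrt{t^r})$.

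For part (ii) the same extraction applies verbatim with $\mathcal{E}(t)=\mathcal{O}(\ln t)$ in place of $\mathcal{O}(t^{2-r})$: dividing by $\theta^2t^2\beta(t)$ gives $\mathcal{L}(x(t),\lambda^*)-\mathcal{L}(x^*,\lambda^*)=\mathcal{O}(\ln t/(t^2\beta(t)))$ and $\|Ax(t)-b\|=\mathcal{O}(\sqrt{\ln t}/(t\sqrt{\beta(t)}))$, and the objective residual is again dominated by the feasibility contribution, the hypothesis $t^2\beta(t)/\ln t\to+\infty$ ensuring the rates vanish and that $\ln t/(t^2\beta(t))=o(\sqrt{\ln t}/(t\sqrt{\beta(t)}))$. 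Note that for $r=2$ the Tikhonov block reduces to $\frac{c\theta^2}{2}\|x(t)\|^2$, so the energy bound only yields $\|x(t)\|^2=\mathcal{O}(\ln t)$ rather than boundedness, which is precisely why part (ii) omits the boundedness and velocity claims.

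I expect the computations to be entirely routine once the correct nonnegative quantities are isolated inside $\mathcal{E}$; the only genuinely non-mechanical step is recognizing that boundedness of $x(t)$ in the fast-decay regime $0<r<2$ is furnished for free by the Tikhonov penalty already carried in $\mathcal{E}$, matched against the sharpened growth rate $\mathcal{E}(t)=\mathcal{O}(t^{2-r})$. This is where the regularization term earns its keep and where the analysis departs from the unregularized systems, so it is the point I would check most carefully.
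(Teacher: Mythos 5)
Your proposal is correct and follows essentially the same route as the paper: it integrates the same energy inequality $\dot{\mathcal{E}}(t)\le\frac{\theta\|x^*\|^2}{2}t\beta(t)\epsilon(t)$ (i.e., \eqref{zd25}/\eqref{14-bf}) with the explicit choice $\beta(t)\epsilon(t)=c/t^r$, extracts boundedness of $x(t)$ from the Tikhonov block and the velocity bound from the $\|x(t)-x^*+\theta t\dot{x}(t)\|^2$ term, and splits $\mathcal{L}_\rho(x(t),\lambda^*)-\mathcal{L}_\rho(x^*,\lambda^*)$ into the two nonnegative pieces $\mathcal{L}(x(t),\lambda^*)-\mathcal{L}(x^*,\lambda^*)$ and $\frac{\rho}{2}\|Ax(t)-b\|^2$ to obtain the stated rates, exactly as in the paper's proofs of (i) and (ii). Your write-up is in fact slightly more explicit than the paper's (which compresses the splitting and Cauchy--Schwarz step into ``using \eqref{minp1}, $\rho>0$ and the definition of $\mathcal{L}_\rho$''), but the argument is the same.
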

\begin{proof}
When  $\beta(t)\epsilon(t)=\frac{c}{t^r}$ with $0<r\leq2$, all the conditions of Theorem \ref{ztt3.2} are satisfied.  It follows from  \eqref{zd25} that
\begin{eqnarray}\label{14-bf}
	\frac{\mathcal{E}(t)}{\theta^2 t^2}&\leq&\frac{\mathcal{E}(t_0)}{\theta^2 t^2}+\frac{c\|x^*\|^2}{2\theta t^2}\int_{t_{0}}^{t}s^{1-r}ds.
	.\end{eqnarray}

(i) When $0<r<2$, from \eqref{14-bf} we have
\begin{eqnarray*}
	\frac{\mathcal{E}(t)}{\theta^2 t^2}\leq\frac{\mathcal{E}(t_0)}{\theta^2 t^2}-\frac{c\|x^*\|^2t_{0}^{2-r}}{2(2-r)\theta t^2}+\frac{c\|x^*\|^2}{2(2-r)\theta t^r}\leq\frac{\mathcal{E}(t_0)}{\theta^2 t^2}+\frac{c\|x^*\|^2}{2(2-r)\theta t^r},
\end{eqnarray*}
which together with \eqref{zs3} implies 
\begin{equation}\label{minp1}
0\leq\mathcal{L}_{\rho}(x(t),\lambda^*)-\mathcal{L}_{\rho}(x^*,\lambda^*)\leq\frac{\mathcal{E}(t_0)}{\theta^2 t^2\beta(t)}+\frac{c\|x^*\|^2}{2(2-r)\theta t^r\beta(t)},
\end{equation}
\begin{equation}\label{minp2}
\frac{1}{2}\|x(t)\|^2\leq\frac{\mathcal{E}(t_0)}{\theta^2 t^2\beta(t)}+\frac{\|x^*\|^2}{2(2-r)\theta},\quad\frac{1}{2}\left\|\frac{1}{\theta t}(x(t)-x^*)+\dot{x}(t)\right\|^2\leq\frac{\mathcal{E}(t_0)}{\theta^2 t^2}+\frac{c\|x^*\|^2}{2(2-r)\theta t^r}.
\end{equation}
Since $\lim_{t\rightarrow+\infty}t^r\beta(t)=+\infty$, it follows from \eqref{minp2} that the primal trajectory $(x(t))_{t\geq t_{0}}$ is bounded,  and
$$\|\dot{x}(t)\|=\mathcal{O}\left(\frac{1}{\sqrt{t^r} }\right),$$
$$\mathcal{L}_{\rho}(x(t),\lambda^*)-\mathcal{L}_{\rho}(x^*,\lambda^*)=\mathcal{O}\left(\frac{1}{t^r\beta(t)}\right).$$  
Using \eqref{minp1}, $\rho>0$ and the definition of $\mathcal{L}_{\rho}$, we obtain
$$\mathcal{L}(x(t),\lambda^*)-\mathcal{L}(x^*,\lambda^*)=\mathcal{O}\left(\frac{1}{t^r\beta(t)}\right),$$  
$$|f(x(t))-f(x^*)|=\mathcal{O}\left(\frac{1}{\sqrt{t^r\beta(t)}}\right), \quad \|Ax(t)-b\|=\mathcal{O}\left(\frac{1}{\sqrt{t^r\beta(t)}}\right).$$

(ii) When $r=2$ and $c>0$, it follows from \eqref{14-bf} that
\begin{eqnarray*}
	\frac{\mathcal{E}(t)}{\theta^2 t^2}\leq\frac{\mathcal{E}(t_0)}{\theta^2 t^2}-\frac{c\|x^*\|^2\ln t_0}{2\theta t^2}+\frac{c\|x^*\|^2\ln t}{2\theta t^2}\leq\frac{\mathcal{E}(t_0)}{\theta^2 t^2}+\frac{c\|x^*\|^2\ln t}{2\theta t^2},\forall t\ge \max\{1,t_0\}.
\end{eqnarray*}
By similar arguments as in the proof of (i), we have
$$\mathcal{L}(x(t),\lambda^*)-\mathcal{L}(x^*,\lambda^*)=\mathcal{O}\left(\frac{\ln t}{t^2\beta(t)}\right),$$
$$|f(x(t))-f(x^*)|=\mathcal{O}\left(\frac{\sqrt{\ln t}}{t\sqrt{\beta(t)}}\right), \quad \|Ax(t)-b\|=\mathcal{O}\left(\frac{\sqrt{\ln t}}{t\sqrt{\beta(t)}}\right).$$
\end{proof}

\section{Fast convergence rates}

In this section, we will investigate fast convergence rates of  the primal-dual gap, the feasibility violation and the objective residual along the trajectory generated by System \eqref{z2} under the condition
\begin{equation}\label{FastSR}
\int_{t_{0}}^{+\infty}t{\beta(t)\epsilon(t)}dt<+\infty,
\end{equation}
which reflects  the case that the rescaled regularization paramter  $\beta(t)\epsilon(t)$  converges rapidly to zero. The convergence analysis is based on the methods developed in \cite{BNguyen2022, zhuhufang1, HulettNeuyen2023}.

%\begin{proposition}\label{zc3.1.1}
%	Assume that $\nabla f$ is $L$-Lipschitz continuous over $\mathcal{X}$ with $L>0$, $\beta(t)\in L_{loc}^{1}([t_0, +\infty))$ and $\epsilon(t)\in L_{loc}^{1}([t_0, +\infty))$, where $ L_{loc}^{1}([t_0, +\infty))$ denotes the family of local integrable functions on $[t_0,+\infty)$. Then, for any initial %condition $(x(t_0), \lambda(t_0), \dot{x}(t_0), \dot{\lambda}(t_0))=(x_0, \lambda_0, u_0, v_0)\in \mathcal{X}\times\mathcal{Y}\times\mathcal{X}\times\mathcal{Y}$, 
%	the dynamical system \eqref{z2} owns a unique strong global solution.
%\end{proposition}

\begin{theorem}\label{ztt3.1}
	Suppose that Assumption \ref{AS-F} and  conditions \eqref{S-C} and \eqref{FastSR} hold,  and $\lim_{t\rightarrow+\infty}t^2\beta(t)=+\infty$.
Let $(x,\lambda): [t_{0},+\infty)\rightarrow \mathcal{X}\times\mathcal{Y}$ be a solution of System \eqref{z2} and  $(x^*,\lambda^*)\in\Omega$. Then
	the trajectory $(x(t),\lambda(t))_{t\geq t_{0}}$ is bounded and  the following statements are ture:
	\begin{itemize}
     \item[(i)]  $\rho\int_{t_{0}}^{+\infty}t\beta(t)\|Ax(t)-b\|^2dt<+\infty, \quad \int_{t_{0}}^{+\infty}t \beta(t)\epsilon(t)\|x(t)\|^2dt<+\infty$
	$$((\alpha-1)\theta-1)\int_{t_{0}}^{+\infty}t(\|\dot{x}(t)\|^2+\|\dot{\lambda}(t)\|^2)dt<+\infty.$$
	\item[(ii)] $\mathcal{L}(x(t),\lambda^*)-\mathcal{L}(x^*,\lambda^*)=\mathcal{O}\left(\frac{1}{t^2\beta(t)}\right), \quad |f(x(t))-f(x^*)|=\mathcal{O}\left(\frac{1}{t^2\beta(t)}\right)$.
	$$\|Ax(t)-b\|=\mathcal{O}\left(\frac{1}{t^2\beta(t)}\right), \;
	\|(\dot{x}(t), \dot{\lambda}(t))\|
	=\mathcal{O}\left(\frac{1}{t}\right), \; \|\nabla f(x(t))-\nabla f(x^*)\|
	=\mathcal{O}\left(\frac{1}{t\sqrt{\beta(t)}}\right).$$
	
     \item[(iii)] Further, if the scaling condition \eqref{S-C} is replaced by \eqref{SS-C}, then
	it also holds
	$$\int_{t_{0}}^{+\infty}t\beta(t)(\mathcal{L}_{\rho}(x(t),\lambda^*)-\mathcal{L}_{\rho}(x^*,\lambda^*))dt<+\infty,\quad 
	\int_{t_{0}}^{+\infty}t\beta(t)\|\nabla f(x(t))-\nabla f(x^*)\|^2dt<+\infty.$$
\end{itemize}

\end{theorem}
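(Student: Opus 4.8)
The plan is to build everything around the single Lyapunov functional $\mathcal{E}(t)$ of Lemma~\ref{lemma21} and its estimate for $\dot{\mathcal{E}}(t)$, reading off from the sign conditions \eqref{zs7} that, apart from the term $\frac{\theta t\beta(t)\epsilon(t)}{2}\|x^*\|^2$, every summand in that estimate is nonpositive. Concretely I would first recast the inequality of Lemma~\ref{lemma21} in dissipative form, discarding the two manifestly nonpositive contributions of its first two lines (these are $\le 0$ by \eqref{zs7}) and transferring the three nonnegative dissipation quantities to the left. Since $\theta\ge\frac{1}{\alpha-1}$ gives $(\alpha-1)\theta-1\ge 0$ and $\rho\ge 0$, this produces the master inequality
\[
\dot{\mathcal{E}}(t)+((\alpha-1)\theta-1)\theta t(\|\dot{x}(t)\|^2+\|\dot{\lambda}(t)\|^2)+\frac{\theta t\beta(t)\epsilon(t)}{2}\|x(t)-x^*\|^2+\frac{\rho\theta t\beta(t)}{2}\|Ax(t)-b\|^2\le \frac{\theta\|x^*\|^2}{2}\,t\beta(t)\epsilon(t).
\]

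Next I would integrate this over $[t_0,t]$. Because $\mathcal{E}\ge 0$ and condition \eqref{FastSR} guarantees $\int_{t_0}^{+\infty}t\beta(t)\epsilon(t)\,dt<+\infty$, the right-hand side is uniformly bounded; hence $\mathcal{E}$ is bounded and all three left-hand integrals are finite, which is exactly part (i) (for the $\|x\|^2$-integral I would pass from $\|x-x^*\|^2$ to $\|x\|^2$ via $\|x\|^2\le 2\|x-x^*\|^2+2\|x^*\|^2$ together with \eqref{FastSR}). Boundedness of $\mathcal{E}$ then delivers most of (ii): reading off the coercive summands of \eqref{zs3} yields $\mathcal{L}_{\rho}(x,\lambda^*)-\mathcal{L}_{\rho}(x^*,\lambda^*)=\mathcal{O}(1/(t^2\beta(t)))$ and the boundedness of $\|x-x^*\|,\|\lambda-\lambda^*\|$, and then $\|x-x^*+\theta t\dot{x}\|=\mathcal{O}(1)$ forces $\|\dot{x}\|=\mathcal{O}(1/t)$, similarly $\|\dot{\lambda}\|=\mathcal{O}(1/t)$. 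Since $\mathcal{L}(x,\lambda^*)-\mathcal{L}(x^*,\lambda^*)\le \mathcal{L}_{\rho}(x,\lambda^*)-\mathcal{L}_{\rho}(x^*,\lambda^*)$ and is nonnegative by \eqref{zcs1} and convexity, the Lagrangian-gap rate follows; the gradient rate $\|\nabla f(x)-\nabla f(x^*)\|=\mathcal{O}(1/(t\sqrt{\beta(t)}))$ is obtained from the co-coercivity bound $\frac{1}{2L}\|\nabla f(x)-\nabla f(x^*)\|^2\le \mathcal{L}(x,\lambda^*)-\mathcal{L}(x^*,\lambda^*)$, valid because $f$ is convex and $L$-smooth with $\nabla f(x^*)=-A^T\lambda^*$.

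For part (iii) I would sharpen the first line of Lemma~\ref{lemma21}: under the strict scaling condition \eqref{SS-C} there is $\delta>0$ with $(2\theta-1)\theta t\beta(t)+\theta^2t^2\dot{\beta}(t)\le -\theta^2\delta\,t\beta(t)$, so the term $((2\theta-1)\theta t\beta+\theta^2t^2\dot{\beta})(\mathcal{L}_\rho-\mathcal{L}_\rho^*)$ is not merely nonpositive but bounded above by $-\theta^2\delta\,t\beta(t)(\mathcal{L}_\rho-\mathcal{L}_\rho^*)$. Keeping this quantity on the left and integrating as before yields $\int_{t_0}^{+\infty}t\beta(t)(\mathcal{L}_{\rho}(x,\lambda^*)-\mathcal{L}_{\rho}(x^*,\lambda^*))\,dt<+\infty$, and the same co-coercivity inequality upgrades this to $\int_{t_0}^{+\infty}t\beta(t)\|\nabla f(x)-\nabla f(x^*)\|^2\,dt<+\infty$.

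I expect the feasibility rate $\|Ax-b\|=\mathcal{O}(1/(t^2\beta(t)))$ in (ii) to be the main obstacle: the energy only controls $\|Ax-b\|^2$ at order $1/(t^2\beta)$ when $\rho>0$, hence $\|Ax-b\|$ at the weaker order $1/(t\sqrt{\beta})$, and gives nothing when $\rho=0$. To reach the sharp pointwise rate I would exploit the dual subsystem directly: setting $v:=Ax-b$ (so $\dot v=A\dot x$), the second equation of \eqref{z2} reads $v+\theta t\dot{v}=\frac{1}{\beta(t)}(\ddot{\lambda}(t)+\frac{\alpha}{t}\dot{\lambda}(t))$, a first-order linear ODE for $v$ with integrating factor $t^{1/\theta}$. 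Combining the resulting representation of $v$ with the already-established facts that $w:=\lambda-\lambda^*+\theta t\dot{\lambda}$ is bounded, $\|\dot{\lambda}\|=\mathcal{O}(1/t)$, and $\int_{t_0}^{+\infty}t\|\dot{\lambda}\|^2\,dt<+\infty$ from (i), should convert this $L^2$/boundedness information on the dual velocity into the claimed decay of $v$. Controlling the boundary and remainder terms in this integration, and covering the case $\rho=0$ uniformly, is where the delicate estimation will lie.
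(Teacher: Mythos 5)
Your Lyapunov core is the paper's: parts (i) and (iii) and most of (ii) are obtained in the paper exactly as you describe (integrate the estimate of Lemma~\ref{lemma21}, use \eqref{zs7} and \eqref{FastSR} to bound $\mathcal{E}$, extract the finite integrals, and under \eqref{SS-C} keep the margin $-b\theta^2 t\beta(t)\bigl(\mathcal{L}_{\rho}(x(t),\lambda^*)-\mathcal{L}_{\rho}(x^*,\lambda^*)\bigr)$ on the left). One small but real flaw: you claim boundedness of $\|x(t)-x^*\|$ and $\|\lambda(t)-\lambda^*\|$ can be ``read off the coercive summands'' of \eqref{zs3}. Assumption~\ref{AS-F} only guarantees $(\alpha-1)\theta-1\ge 0$, and in the admissible edge case $\theta=\frac{1}{\alpha-1}$ those summands vanish, so they carry no information; your subsequent deduction of $\|\dot{x}(t)\|=\mathcal{O}(1/t)$ from $\|x(t)-x^*+\theta t\dot{x}(t)\|=\mathcal{O}(1)$ then presupposes a boundedness you have not established. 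The paper closes this by applying Lemma~\ref{lemma2.1.1} to the bounded quantity $\|x(t)-x^*+\theta t\dot{x}(t)\|$, which yields boundedness of the trajectory with no help from the quadratic distance terms; you need that lemma (or an equivalent ODE argument) here.

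The serious gap is the feasibility rate $\|Ax(t)-b\|=\mathcal{O}\bigl(\tfrac{1}{t^2\beta(t)}\bigr)$, which you correctly single out as the hard step but for which your proposed route would not deliver the claimed order. Solving $v+\theta t\dot{v}=\tfrac{1}{\beta(t)}\bigl(\ddot{\lambda}+\tfrac{\alpha}{t}\dot{\lambda}\bigr)$ with the integrating factor $t^{1/\theta}$ and then estimating in norm forces you, after removing $\ddot{\lambda}$ by parts, to bound terms of the form $t^{-1/\theta}\int_{t_0}^{t}\frac{s^{1/\theta-2}}{\beta(s)}\|\dot{\lambda}(s)\|\,ds$ (plus a $\dot{\beta}$-term of the same size). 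In the critical regime allowed by \eqref{S-C}, namely $\beta(s)\sim s^{(1-2\theta)/\theta}$ with $\|\dot{\lambda}(s)\|\sim 1/s$, the integrand is $\sim s^{-1}$, so this term is $\sim t^{-1/\theta}\ln t$ while the target $\tfrac{1}{t^2\beta(t)}$ is $\sim t^{-1/\theta}$; Cauchy--Schwarz against $\int t\|\dot{\lambda}\|^2dt<+\infty$ still leaves a $\sqrt{\ln t}$ loss. The paper's argument is structurally different precisely to avoid this: it never divides by $\beta$. Starting from $\int_{t_0}^{t}s\ddot{\lambda}(s)ds$ and the second equation of \eqref{z2}, and integrating $\int\theta s^2\beta(s)\,d(Ax(s)-b)$ by parts, one arrives at $\bigl\|g(t)+\int_{t_0}^{t}a(s)g(s)ds\bigr\|\le C$ with $g(t)=\theta t^2\beta(t)(Ax(t)-b)$ and $a(s)=\frac{(1-2\theta)s\beta(s)-\theta s^2\dot{\beta}(s)}{\theta s^2\beta(s)}\ge 0$ by \eqref{S-C}; the Gronwall-type Lemma~\ref{lemma2.1.2} then gives $\sup_{t\ge t_0}\|g(t)\|<+\infty$ outright, with no integral of $1/\beta$ and hence no logarithmic loss, and uniformly in $\rho\ge 0$. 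Once this is in hand, $|f(x(t))-f(x^*)|=\mathcal{O}\bigl(\tfrac{1}{t^2\beta(t)}\bigr)$ is immediate from $f(x(t))-f(x^*)=\bigl(\mathcal{L}(x(t),\lambda^*)-\mathcal{L}(x^*,\lambda^*)\bigr)-\langle\lambda^*,Ax(t)-b\rangle$, a step your sketch also leaves implicit.
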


\begin{proof}
By Lemma \ref{lemma21}, we have 
\begin{eqnarray}\label{zs8}
	&&\mathcal{E}(t)-\int_{t_{0}}^{t}((2\theta-1)\theta s\beta(s)+\theta^2s^2\dot{\beta}(s))(\mathcal{L}_{\rho}(x(s),\lambda^*)-\mathcal{L}_{\rho}(x^*,\lambda^*))ds\nonumber\\
	&&\qquad-\frac{1}{2}\int_{t_{0}}^{t}\left(((2\theta-1)\theta s\beta(s)+\theta^2s^2\dot{\beta}(s))\epsilon(s)+\theta^2s^2\beta(s)\dot{\epsilon}(s)\right)\|x(s)\|^2ds\nonumber\\
	&&\qquad-(1+(1-\alpha)\theta)\theta\int_{t_{0}}^{t} s(\|\dot{x}(s)\|^2+\|\dot{\lambda}(s)\|^2)ds
	+\frac{\theta}{2}\int_{t_{0}}^{t}s\beta(s)\epsilon(s)\|x(s)-x^*\|^2ds\nonumber\\
	&&\qquad+\frac{\rho\theta}{2}\int_{t_{0}}^{t}s\beta(s)\|Ax(s)-b\|^2ds\leq\mathcal{E}(t_0)+\frac{\theta \|x^*\|^2}{2}\int_{t_{0}}^{t}s\beta(s)\epsilon(s)ds,\quad\forall t\geq t_{0}.
\end{eqnarray}
Since $\int_{t_{0}}^{+\infty}t\beta(t)\epsilon(t)dt<+\infty$ and  $\mathcal{L}_{\rho}(x(t),\lambda^*)-\mathcal{L}_{\rho}(x^*,\lambda^*)\geq0$,  the boundedness of  $(\mathcal{E}(t))_{t\geq t_0}$  follows from and \eqref{zs7}and \eqref{zs8}.

(i) Since  $(\mathcal{E}(t))_{t\geq t_0}$ is bounded, it follows from \eqref{zs8} and \eqref{FastSR} that

\begin{eqnarray}\label{zs12}
\int_{t_{0}}^{+\infty}((1-2\theta)\theta t\beta(t)-\theta^2t^2\dot{\beta}(t))(\mathcal{L}_{\rho}(x(t),\lambda^*)-\mathcal{L}_{\rho}(x^*,\lambda^*))dt<+\infty,
\end{eqnarray}
$$\rho\int_{t_{0}}^{+\infty}t\beta(t)\|Ax(t)-b\|^2dt<+\infty,$$
\begin{eqnarray*}
((\alpha-1)\theta-1)\int_{t_{0}}^{+\infty}t(\|\dot{x}(t)\|^2+\|\dot{\lambda}(t)\|^2)dt<+\infty,
\end{eqnarray*}
\begin{eqnarray}\label{zs14}
	\int_{t_{0}}^{+\infty}t\beta(t)\epsilon(t)\|x(t)-x^*\|^2dt<+\infty.
\end{eqnarray}
Using \eqref{zs14} and \eqref{FastSR}, we  obtain 
$$\int_{t_{0}}^{+\infty}t\beta(t)\epsilon(t)\|x(t)\|^2dt<+\infty.$$

(ii) Using \eqref{zs3} and the boundedness of  $(\mathcal{E}(t))_{t\geq t_0}$, we have 
\begin{eqnarray}\label{zs11}
	\mathcal{L}(x(t),\lambda^*)-\mathcal{L}(x^*,\lambda^*)=\mathcal{O}\left(\frac{1}{t^2\beta(t)}\right)
\end{eqnarray} 
and 
\begin{equation}\label{mix-fa}
\frac{1}{2}\|x(t)-x^*+\theta t\dot{x}(t)\|^2\leq C_1,\quad \frac{1}{2}\|\lambda(t)-\lambda^*+\theta t\dot{\lambda}(t)\|^2\leq C_2, \qquad \forall t\geq t_0.
\end{equation}
Applying Lemma \ref{lemma2.1.1} to \eqref{mix-fa}, we obtain that the trajectory $(x(t), \lambda(t))_{t\geq t_0}$ is bounded, and then
$$\|(\dot{x}(t),\dot{\lambda}(t) )\|=\mathcal{O}(\frac{1}{t}).$$

Since $f$ is convex and $\nabla f$ is $L$-Lipschitz continuous,
\begin{eqnarray*}
	f(x(t))-f(x^*)-\langle\nabla f(x^*), x(t)-x^*\rangle
	\geq\dfrac{1}{2L}\|\nabla f(x(t))-\nabla f(x^*)\|^2,
\end{eqnarray*}
which  implies
\begin{eqnarray}\label{zs13}
	\mathcal{L}(x(t),\lambda^*)-\mathcal{L}(x^*,\lambda^*)&=&f(x(t))-f(x^*)+\langle \lambda^*, Ax(t)-b\rangle \nonumber\\
	&\geq&\langle\nabla f(x^*), x(t)-x^*\rangle+\langle A^T\lambda^*, x(t)-x^*\rangle\nonumber\\
	&&+\dfrac{1}{2L}\|\nabla f(x(t))-\nabla f(x^*)\|^2\nonumber\\
	&=&\dfrac{1}{2L}\|\nabla f(x(t))-\nabla f(x^*)\|^2,
\end{eqnarray} 
where the last equality uses \eqref{zcs1}. From  \eqref{zs11} and \eqref{zs13}, we get 
$$\|\nabla f(x(t))-\nabla f(x^*)\|=\mathcal{O}\left(\frac{1}{t\sqrt{\beta(t)}}\right).$$

According to \eqref{z2}, we have 
\begin{eqnarray*}
	&&t\dot{\lambda}(t)-\lambda(t)-(t_0\dot{\lambda}(t_0)-\lambda(t_{0}))\\
	&&\quad=\int_{t_{0}}^{t}s\ddot{\lambda}(s)ds=\int_{t_{0}}^{t}(-\alpha\dot{\lambda}(s)+s\beta(s)(A(x+\theta s\dot{x}(s))-b))ds\nonumber\\
	&&\quad=-\alpha\int_{t_{0}}^{t}\dot{\lambda}(s)ds+\int_{t_{0}}^{t}s\beta(s)(Ax(s)-b)ds+\int_{t_{0}}^{t}\theta s^2\beta(s)d(Ax(s)-b)\nonumber\\
	&&\quad=-\alpha(\lambda(t)-\lambda(t_0))+\int_{t_{0}}^{t}s\beta(s)(Ax(s)-b)ds-\int_{t_{0}}^{t}(\theta s^2\dot{\beta}(s)+2\theta s\beta(s))(Ax(s)-b)ds\\
	&&\qquad+\theta t^2\beta(t)(Ax(t)-b)-\theta t_{0}^2\beta(t_{0})(Ax(t_{0})-b)\\
	&&\quad=-\alpha(\lambda(t)-\lambda(t_0))+\int_{t_{0}}^{t}((1-2\theta)s\beta(s)-\theta s^2\dot{\beta}(s))(Ax(s)-b)ds\\
	&&\qquad+\theta t^2\beta(t)(Ax(t)-b)-\theta t_{0}^2\beta(t_{0})(Ax(t_{0})-b), 
\end{eqnarray*}
which together with  the boundedness of $(x(t), \lambda(t))_{t\geq t_0}$ and 
$\|(\dot{x}(t),\dot{\lambda}(t) )\|=\mathcal{O}(\frac{1}{t})$ implies 
\begin{eqnarray}\label{zs10}
\|\theta t^2\beta(t)(Ax(t)-b)+\int_{t_{0}}^{t}\frac{((1-2\theta)s\beta(s)-\theta s^2\dot{\beta}(s))}{\theta s^2\beta(s)}\theta s^2\beta(s)(Ax(s)-b)ds\|\leq C_3,\forall t\ge t_0.
\end{eqnarray}
From \eqref{S-C}, we have $\frac{(1-2\theta)t\beta(t)-\theta t^2\dot{\beta}(t)}{\theta t^2\beta(t)}\geq0 $. 
Applying Lemma \ref{lemma2.1.2} with $g(t)=\theta t^2\beta(t)(Ax(t)-b)$ and $a(t)=\frac{(1-2\theta)t\beta(t)-\theta t^2\dot{\beta}(t)}{\theta t^2\beta(t)}$ to \eqref{zs10}, we obtain 
$$\sup_{t\geq t_{0}}\theta t^2\beta(t)\|Ax(t)-b\|<+\infty.$$
As a result,  
\begin{eqnarray*}
	\|Ax(t)-b\|=\mathcal{O}\left(\frac{1}{t^2\beta(t)}\right),
\end{eqnarray*}
which together with \eqref{zs11} yields 
\begin{eqnarray*}
	|f(x(t))-f(x^*)|=\mathcal{O}\left(\frac{1}{t^2\beta(t)}\right).
\end{eqnarray*}

(iii) By \eqref{SS-C}, there exists a constant $b>0$ small enough such that 
\begin{eqnarray}\label{zs19}
	 t\dot{\beta}(t)\leq(\frac{1-2\theta}{\theta}-b)\beta(t), \quad \forall t\geq t_0,
\end{eqnarray}
which together with \eqref{zs12} implies  
$$\int_{t_{0}}^{+\infty}t\beta(t)(\mathcal{L}_{\rho}(x(t),\lambda^*)-\mathcal{L}_{\rho}(x^*,\lambda^*))dt<+\infty.$$
By using \eqref{zs13}, we have 
\begin{eqnarray*}
\mathcal{L}_{\rho}(x(t),\lambda^*)-\mathcal{L}_{\rho}(x^*,\lambda^*)
\geq\mathcal{L}(x(t),\lambda^*)-\mathcal{L}(x^*,\lambda^*)\geq\dfrac{1}{2L}\|\nabla f(x(t))-\nabla f(x^*)\|^2.
\end{eqnarray*}
As a result,
$$\int_{t_{0}}^{+\infty}t\beta(t)\|\nabla f(x(t))-\nabla f(x^*)\|^2dt<+\infty.$$ 

\end{proof}

%\begin{remark}\label{corollary: eqfang1}
%By taking $\beta(t)\epsilon(t)=\frac{c}{t^r}$ with $r>2$ and $c>0$, we have $\int_{t_0}^{+\infty}t\beta(t)\epsilon(t)<+\infty$. Thus, we can obtain all conclusions of Theorem \ref{ztt3.1}.
%\end{remark}

\begin{remark}\label{corollary: eqfangw1}
Attouch et al. \cite[Theorem 3.1]{AttouchZH2018} showed  that $\text{(AVD)}_{\epsilon}$  exhibits  fast convergence rates to similar to  $\text{(AVD)}$ when $\int_{t_0}^{+\infty}\epsilon(t)dt<+\infty$, which reflects the case that  the Tikhonov regularization parameter
$\epsilon(t)$ converges rapidly to zero. The convergen rate results of Theorem \ref{ztt3.1} are consistent with the ones of  \cite[Theorem 3.2 and Theorem 3.3]{HulettNeuyen2023} and  \cite[Theorem 3.4]{BNguyen2022}. This indicates that when  \eqref{FastSR} is satisfied, System \eqref{z2}, the Tikhonov regularized version of  $\text{ (HN-AVD)}$ enjoys  convergence rates similar  to $\text{ (HN-AVD)}$.  In this sense,  Theorem \ref{ztt3.1} can be regarded as an extension of \cite[Theorem 3.1]{AttouchZH2018}. The convergence rate results in Theorem \ref{ztt3.1} are also consistent with the ones reported in \cite[Theorem 3.2]{zhuhufang1} on $\text{(ZHF-AVD)}$.
\end{remark}

\section{Weak convergence of the trajectory}

In this section,  following the appoaches developed in \cite{BNguyen2022, HulettNeuyen2023}, we will prove the weak covergence of the trajectory generated by System  \eqref{z2} to a primal-dual solution of Problem \eqref{z1}. To do this, we need  the following  strengthened version of Assumption \ref{AS-F}.
\begin{assumption}\label{as1}
	Suppose that $\epsilon: [t_0,+\infty)\rightarrow [0,+\infty)$ is a  nonincreasing and  continuously differentiable function satisfying $\lim_{t\to+\infty}\epsilon(t)=0$, $\beta: [t_0,+\infty)\rightarrow (0,+\infty)$ is a nondecreasing and continuously differentiable  function, and
	$$\frac{1}{\alpha-1}<\theta<\frac{1}{2}, \quad \alpha>3, \quad \rho>0, \quad \Omega\ne\emptyset.$$
\end{assumption}

Let $(x,\lambda): [t_{0},+\infty)\rightarrow \mathcal{X}\times\mathcal{Y}$ be a solution of System \eqref{z2} and  $(x^*,\lambda^*)\in\Omega$. Define  $h: [t_0,+\infty)\rightarrow [0, +\infty)$ and $W: [t_0,+\infty)\rightarrow[0, +\infty)$ by 
\begin{eqnarray*}
	h(t)=\frac{1}{2}\|(x(t),\lambda(t))-(x^*,\lambda^*)\|^2
\end{eqnarray*}
and 
\begin{eqnarray}\label{zs17}
W(t)=\beta(t)\left(\mathcal{L}_{\rho}(x(t),\lambda^*)-\mathcal{L}_{\rho}(x^*,\lambda^*)+\frac{\epsilon(t)}{2}\|x(t)\|^2\right)+\frac{1}{2}\|(\dot{x}(t),\dot{\lambda}(t))\|^2.
\end{eqnarray}

\begin{lemma}\label{lemma4.1.1}
	Suppose that Assumption \ref{as1}  and \eqref{SS-C} hold. Let $(x,\lambda): [t_{0},+\infty)\rightarrow \mathcal{X}\times\mathcal{Y}$ be a solution of  System \eqref{z2} and  $(x^*,\lambda^*)\in\Omega$. Then, it holds:
	\begin{eqnarray*}
		\ddot{h}(t)+\frac{\alpha}{t}\dot{h}(t)+\theta t\dot{W}(t)&\leq&\frac{\beta(t)\epsilon(t)}{2}\|x^*\|^2
		-\dfrac{\beta(t)}{2L}\|\nabla f(x(t))-\nabla f(x^*)\|^2\\
	&&-\frac{\rho\beta(t)}{2}\|Ax(t)-b\|^2, \quad \forall t\geq t_{0}.
	\end{eqnarray*}
\end{lemma}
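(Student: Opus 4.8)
The plan is to differentiate $h$ and $W$ along the trajectory, substitute the two equations of System \eqref{z2}, and then verify that after all the $A$/$A^T$ coupling terms cancel, every remaining term is either one of the three terms on the right-hand side or is nonpositive under Assumption \ref{as1} and \eqref{SS-C}.

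First I would compute
\[\ddot h(t)+\frac{\alpha}{t}\dot h(t)=\|\dot x\|^2+\|\dot\lambda\|^2+\langle x-x^*,\ddot x+\tfrac{\alpha}{t}\dot x\rangle+\langle \lambda-\lambda^*,\ddot\lambda+\tfrac{\alpha}{t}\dot\lambda\rangle\]
and substitute \eqref{z2}. Writing $\nabla_x\mathcal L_\rho(x,\lambda^*)=\nabla f(x)+A^T\lambda^*+\rho A^T(Ax-b)$ and using $Ax^*=b$, the two terms $\pm\beta\langle Ax-b,\lambda-\lambda^*\rangle$ arising from the primal and dual equations cancel, leaving the main term $-\beta\langle x-x^*,\nabla_x\mathcal L_\rho(x,\lambda^*)+\epsilon x\rangle$ together with the coupling terms $-\theta t\beta\langle Ax-b,\dot\lambda\rangle+\theta t\beta\langle A^T(\lambda-\lambda^*),\dot x\rangle$.

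Next I would differentiate $W$, obtaining $\dot W=\dot\beta E+\beta\dot E+\langle\dot x,\ddot x\rangle+\langle\dot\lambda,\ddot\lambda\rangle$, where $E=\mathcal L_\rho(x,\lambda^*)-\mathcal L_\rho(x^*,\lambda^*)+\frac{\epsilon}{2}\|x\|^2$ and $\dot E=\langle\nabla_x\mathcal L_\rho(x,\lambda^*)+\epsilon x,\dot x\rangle+\frac{\dot\epsilon}{2}\|x\|^2$. After substituting \eqref{z2} into $\langle\dot x,\ddot x\rangle$ and $\langle\dot\lambda,\ddot\lambda\rangle$ and using $\nabla_x\mathcal L_\rho(x,\lambda+\theta t\dot\lambda)=\nabla_x\mathcal L_\rho(x,\lambda^*)+A^T(\lambda-\lambda^*+\theta t\dot\lambda)$, the self-gradient pieces in $\theta t\beta\dot E$ cancel against those in $\theta t\langle\dot x,\ddot x\rangle$, the two terms $\pm\theta^2t^2\beta\langle A\dot x,\dot\lambda\rangle$ cancel, and $\theta t\dot W$ contributes exactly $-\theta\alpha(\|\dot x\|^2+\|\dot\lambda\|^2)$, the scaling piece $\theta t\dot\beta E+\frac{1}{2}\theta t\beta\dot\epsilon\|x\|^2$, and the negatives of the two coupling terms found above. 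The main bookkeeping obstacle is precisely this exact cancellation of all coupling terms; once it is in place one is left with
\[\ddot h+\frac{\alpha}{t}\dot h+\theta t\dot W=(1-\theta\alpha)(\|\dot x\|^2+\|\dot\lambda\|^2)-\beta\langle x-x^*,\nabla_x\mathcal L_\rho(x,\lambda^*)+\epsilon x\rangle+\theta t\dot\beta E+\frac{1}{2}\theta t\beta\dot\epsilon\|x\|^2.\]

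Finally I would estimate the surviving inner product. Treating the summands of $\nabla_x\mathcal L_\rho(x,\lambda^*)+\epsilon x$ separately and invoking the co-coercivity bound $\langle\nabla f(x),x-x^*\rangle\ge f(x)-f(x^*)+\frac{1}{2L}\|\nabla f(x)-\nabla f(x^*)\|^2$ (the rearranged form of \eqref{zs13}), the identities $\langle x-x^*,A^T\lambda^*\rangle=\langle Ax-b,\lambda^*\rangle$, $\rho\langle x-x^*,A^T(Ax-b)\rangle=\rho\|Ax-b\|^2$ and $\epsilon\langle x,x-x^*\rangle=\frac{\epsilon}{2}(\|x\|^2+\|x-x^*\|^2-\|x^*\|^2)$ give
\[-\beta\langle x-x^*,\nabla_x\mathcal L_\rho(x,\lambda^*)+\epsilon x\rangle\le-\beta(\mathcal L_\rho(x,\lambda^*)-\mathcal L_\rho(x^*,\lambda^*))-\frac{\rho\beta}{2}\|Ax-b\|^2-\frac{\beta}{2L}\|\nabla f(x)-\nabla f(x^*)\|^2-\frac{\beta\epsilon}{2}\|x\|^2-\frac{\beta\epsilon}{2}\|x-x^*\|^2+\frac{\beta\epsilon}{2}\|x^*\|^2.\]
Substituting this and expanding $\theta t\dot\beta E=\theta t\dot\beta(\mathcal L_\rho(x,\lambda^*)-\mathcal L_\rho(x^*,\lambda^*))+\frac{1}{2}\theta t\dot\beta\epsilon\|x\|^2$ produces, besides the three target terms, the error terms $(1-\theta\alpha)(\|\dot x\|^2+\|\dot\lambda\|^2)$, $(\theta t\dot\beta-\beta)(\mathcal L_\rho(x,\lambda^*)-\mathcal L_\rho(x^*,\lambda^*))$, $\frac{1}{2}(\theta t\dot\beta\epsilon+\theta t\beta\dot\epsilon-\beta\epsilon)\|x\|^2$ and $-\frac{\beta\epsilon}{2}\|x-x^*\|^2$. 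To conclude I would check each is nonpositive: $1-\theta\alpha<0$ since $\theta>\frac{1}{\alpha-1}$; and \eqref{SS-C} gives $\theta t\dot\beta<(1-2\theta)\beta$, so $\theta t\dot\beta-\beta<-2\theta\beta<0$ and, since $\dot\epsilon\le0$, $\theta t\dot\beta\epsilon+\theta t\beta\dot\epsilon-\beta\epsilon\le-2\theta\beta\epsilon\le0$. Discarding these nonpositive quantities yields the asserted inequality.
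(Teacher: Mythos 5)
Your proposal is correct and follows essentially the same route as the paper's proof: compute $\ddot h+\frac{\alpha}{t}\dot h$ and $\theta t\dot W$ from System \eqref{z2}, observe that the $A$/$A^T$ coupling terms cancel exactly, bound the surviving term $-\beta(t)\langle x(t)-x^*,\nabla_x\mathcal{L}_{\rho}(x(t),\lambda^*)+\epsilon(t)x(t)\rangle$ via the co-coercivity inequality for $\nabla f$ together with $Ax^*=b$, and discard the remaining terms as nonpositive using \eqref{SS-C}, $\dot\epsilon\le 0$, $\mathcal{L}_{\rho}(x(t),\lambda^*)-\mathcal{L}_{\rho}(x^*,\lambda^*)\ge 0$, and $\alpha\theta>1$. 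The only difference is cosmetic bookkeeping (you establish an exact identity for $\ddot h+\frac{\alpha}{t}\dot h+\theta t\dot W$ before estimating, and expand $\epsilon\langle x,x-x^*\rangle$ earlier than the paper does), which does not change the substance of the argument.
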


\begin{proof}
	By the definition of $h(t)$, we obtain
	$$\dot{h}(t)=\langle x(t)-x^*,\dot{x}(t)\rangle+\langle \lambda(t)-\lambda^*,\dot{\lambda}(t)\rangle$$
and
	$$\ddot{h}(t)=\langle x(t)-x^*,\ddot{x}(t)\rangle+\langle \lambda(t)-\lambda^*,\ddot{\lambda}(t)\rangle+\|\dot{x}(t)\|^2+\|\dot{\lambda}(t)\|^2.$$
	It follows from \eqref{z2} that
	\begin{eqnarray}\label{satf1}
		\ddot{h}(t)+\frac{\alpha}{t}\dot{h}(t)&=&\|\dot{x}(t)\|^2+\|\dot{\lambda}(t)\|^2+\langle x(t)-x^*,\ddot{x}(t)+\frac{\alpha}{t}\dot{x}(t)\rangle+\langle \lambda(t)-\lambda^*,\ddot{\lambda}(t)+\frac{\alpha}{t}\dot{\lambda}(t)\rangle\nonumber\\
		&=&\|\dot{x}(t)\|^2+\|\dot{\lambda}(t)\|^2-\beta(t)\langle x(t)-x^*, \nabla_{x}\mathcal{L}_{\rho}(x(t),\lambda^*)\rangle\nonumber\\
		&&-\beta(t)\langle x(t)-x^*, A^T(\lambda(t)-\lambda^*+\theta t\dot{\lambda}(t)\rangle-\beta(t)\epsilon(t)\langle x(t)-x^*, x(t)\rangle\nonumber\\
		&&+\beta(t)\langle \lambda(t)-\lambda^*, Ax(t)-b\rangle+\theta t\beta(t)\langle \lambda(t)-\lambda^*, A\dot{x}(t)\rangle\nonumber\\
		&=&\|\dot{x}(t)\|^2+\|\dot{\lambda}(t)\|^2-\beta(t)\langle x(t)-x^*, \nabla_{x}\mathcal{L}_{\rho}(x(t),\lambda^*)\rangle\nonumber\\
		&&-\theta t\beta(t)\langle x(t)-x^*, A^T\dot{\lambda}(t)\rangle-\beta(t)\epsilon(t)\langle x(t)-x^*, x(t)\rangle\nonumber\\
		&&+\theta t\beta(t)\langle \lambda(t)-\lambda^*, A\dot{x}(t)\rangle,
	\end{eqnarray}
	where the last equality uses \eqref{zcs1}.
	Since
	\begin{eqnarray*}
		f(x^*)-f(x(t))-\langle\nabla f(x(t)), x^*-x(t)\rangle
		\geq\dfrac{1}{2L}\|\nabla f(x(t))-\nabla f(x^*)\|^2,
	\end{eqnarray*}
	it follows that 
	\begin{eqnarray*}
		-\langle x(t)-x^*, \nabla_{x}\mathcal{L}_{\rho}(x(t),\lambda^*)\rangle &=&\langle x^*-x(t), \nabla f(x(t))\rangle-\langle \lambda^*, Ax(t)-b\rangle-\rho\|Ax(t)-b\|^2\\
		&\leq&f(x^*)-f(x(t))-\dfrac{1}{2L}\|\nabla f(x(t))-\nabla f(x^*)\|^2\\
		&&-\langle \lambda^*, Ax(t)-b\rangle-\rho\|Ax(t)-b\|^2\\
		&=&-\left(\mathcal{L}_{\rho}(x(t),\lambda^*)-\mathcal{L}_{\rho}(x^*,\lambda^*)\right)-\dfrac{1}{2L}\|\nabla f(x(t))-\nabla f(x^*)\|^2\\
		&&-\frac{\rho}{2}\|Ax(t)-b\|^2.
	\end{eqnarray*}
	This together with \eqref{satf1} yields
	\begin{eqnarray}\label{satf2}
		\ddot{h}(t)+\frac{\alpha}{t}\dot{h}(t)&\leq&\|\dot{x}(t)\|^2+\|\dot{\lambda}(t)\|^2-\beta(t)\left(\mathcal{L}_{\rho}(x(t),\lambda^*)-\mathcal{L}_{\rho}(x^*,\lambda^*)\right)\nonumber\\
		&&-\dfrac{\beta(t)}{2L}\|\nabla f(x(t))-\nabla f(x^*)\|^2-\frac{\rho\beta(t)}{2}\|Ax(t)-b\|^2\nonumber\\
		&&-\theta t\beta(t)\langle x(t)-x^*, A^T\dot{\lambda}(t)\rangle-\beta(t)\epsilon(t)\langle x(t)-x^*, x(t)\rangle\nonumber\\
		&&+\theta t\beta(t)\langle \lambda(t)-\lambda^*, A\dot{x}(t)\rangle.
	\end{eqnarray}
	According to \eqref{zs17}, we have 
	\begin{eqnarray*}
		\dot{W}(t)&=&\beta(t)\left(\langle\nabla_{x}\mathcal{L}_{\rho}(x(t),\lambda^*), \dot{x}(t)\rangle+\epsilon(t)\langle x(t),\dot{x}(t)\rangle+\frac{\dot{\epsilon}(t)}{2}\|x(t)\|^2\right)+\langle\dot{x}(t), \ddot{x}(t)\rangle\\
		&&+\langle\dot{\lambda}(t), \ddot{\lambda}(t)\rangle+\dot{\beta}(t)\left(\mathcal{L}_{\rho}(x(t),\lambda^*)-\mathcal{L}_{\rho}(x^*,\lambda^*)+\frac{\epsilon(t)}{2}\|x(t)\|^2\right)\\
		&=&\beta(t)\langle\nabla_{x}\mathcal{L}_{\rho}(x(t),\lambda^*), \dot{x}(t)\rangle+\beta(t)\epsilon(t)\langle x(t),\dot{x}(t)\rangle+\frac{\beta(t)\dot{\epsilon}(t)}{2}\|x(t)\|^2\\
		&&-\frac{\alpha}{t}\|\dot{x}(t)\|^2-\beta(t)\langle\dot{x}(t), \nabla_{x}\mathcal{L}_{\rho}(x(t),\lambda(t)+\theta t\dot{\lambda}(t))\rangle-\beta(t)\epsilon(t)\langle \dot{x}(t),x(t)\rangle\\
		&&-\frac{\alpha}{t}\|\dot{\lambda}(t)\|^2+\beta(t)\langle\dot{\lambda}(t), A(x(t)+\theta t\dot{x}(t))-b\rangle+\dot{\beta}(t)(\mathcal{L}_{\rho}(x(t),\lambda^*)-\mathcal{L}_{\rho}(x^*,\lambda^*))\\
		&&+\frac{\dot{\beta}(t)\epsilon(t)}{2}\|x(t)\|^2.
	\end{eqnarray*}
Since
	$$\nabla_{x}\mathcal{L}_{\rho}(x(t),\lambda(t)+\theta t\dot{\lambda}(t))=\nabla_{x}\mathcal{L}_{\rho}(x(t),\lambda^*)+A^T(\lambda(t)-\lambda^*+\theta t\dot{\lambda}(t)),$$  
it follows that
	\begin{eqnarray*}
		\dot{W}(t)&=&\frac{\beta(t)\dot{\epsilon}(t)}{2}\|x(t)\|^2-\frac{\alpha}{t}\|\dot{x}(t)\|^2-\frac{\alpha}{t}\|\dot{\lambda}(t)\|^2
		-\beta(t)\langle A\dot{x}(t), \lambda(t)-\lambda^*\rangle\\
		&&+\beta(t)\langle\dot{\lambda}(t), Ax(t)-b\rangle+\dot{\beta}(t)(\mathcal{L}_{\rho}(x(t),\lambda^*)-\mathcal{L}_{\rho}(x^*,\lambda^*))
		+\frac{\dot{\beta}(t)\epsilon(t)}{2}\|x(t)\|^2,
	\end{eqnarray*}
	which together with \eqref{satf2} implies
	\begin{eqnarray}\label{zs18}
		\ddot{h}(t)+\frac{\alpha}{t}\dot{h}(t)+\theta t\dot{W}(t)&\leq&(\theta t\dot{\beta}(t)-\beta(t))(\mathcal{L}_{\rho}(x(t),\lambda^*)-\mathcal{L}_{\rho}(x^*,\lambda^*))\nonumber\\
		&&+(1-\alpha\theta)(\|\dot{x}(t)\|^2+\|\dot{\lambda}(t)\|^2)+\left(\frac{\theta t\dot{\beta}(t)\epsilon(t)}{2}+\frac{\theta t\beta(t)\dot{\epsilon}(t)}{2}\right)\|x(t)\|^2\nonumber\\
		&&-\dfrac{\beta(t)}{2L}\|\nabla f(x(t))-\nabla f(x^*)\|^2-\frac{\rho\beta(t)}{2}\|Ax(t)-b\|^2\nonumber\\
		&&-\beta(t)\epsilon(t)\langle x(t)-x^*, x(t)\rangle\nonumber\\
		&\leq&(1-\alpha\theta)(\|\dot{x}(t)\|^2+\|\dot{\lambda}(t)\|^2)+\frac{1-2\theta}{2}\beta(t)\epsilon(t)\|x(t)\|^2\nonumber\\
		&&-\dfrac{\beta(t)}{2L}\|\nabla f(x(t))-\nabla f(x^*)\|^2-\frac{\rho\beta(t)}{2}\|Ax(t)-b\|^2\nonumber\\
		&&-\beta(t)\epsilon(t)\langle x(t)-x^*, x(t)\rangle,
	\end{eqnarray}    
	where the last inequality uses \eqref{SS-C}, $\dot{\epsilon}(t)\leq0$, and $\mathcal{L}_{\rho}(x(t),\lambda^*)-\mathcal{L}_{\rho}(x^*,\lambda^*)\geq0$.
	Further, we have
	\begin{eqnarray*}
		\frac{\epsilon(t)}{2}\|x(t)\|^2-\epsilon(t)\langle x(t)-x^*, x(t)\rangle=\frac{\epsilon(t)}{2}(\|x^*\|^2-\|x(t)-x^*\|^2)
		\leq\frac{\epsilon(t)}{2}\|x^*\|^2. 
	\end{eqnarray*}
	This, combined with  $\theta>\frac{1}{\alpha-1}$, $\alpha>3 $ and \eqref{zs18}, implies
	\begin{eqnarray*}
		\ddot{h}(t)+\frac{\alpha}{t}\dot{h}(t)+\theta t\dot{W}(t)&\leq&\frac{\beta(t)\epsilon(t)}{2}\|x^*\|^2
		-\dfrac{\beta(t)}{2L}\|\nabla f(x(t))-\nabla f(x^*)\|^2\nonumber\\
		&&-\frac{\rho\beta(t)}{2}\|Ax(t)-b\|^2.
	\end{eqnarray*}
\end{proof}

\begin{lemma}\label{lemma4.1.2}
Suppose that Assumption \ref{as1}  and conditions \eqref{SS-C} and \eqref{FastSR} hold.
Let $(x,\lambda): [t_{0},+\infty)\rightarrow \mathcal{X}\times\mathcal{Y}$ be a solution of System \eqref{z2} and  $(x^*,\lambda^*)\in\Omega$. Then, the positive part $[\dot{h}]_{+}$ of $\dot{h}$ satisfies $\int_{t_{0}}^{+\infty}[\dot{h}]_{+}dt<+\infty$ and the limit $\lim_{t\rightarrow+\infty}h(t)\in\mathbb{R}$ exists.
\end{lemma}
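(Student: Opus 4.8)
The plan is to start from the differential inequality of Lemma~\ref{lemma4.1.1}, discard its two manifestly non-positive terms, and turn the surviving estimate into an integrable bound on the positive part $[\dot h]_+$ by means of an integrating factor. The decisive preliminary fact will be that $\int_{t_0}^{+\infty} tW(t)\,dt<+\infty$.

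First I would check that, under Assumption~\ref{as1}, every hypothesis of Theorem~\ref{ztt3.1} is satisfied: Assumption~\ref{as1} strengthens Assumption~\ref{AS-F}, the strict scaling condition \eqref{SS-C} implies \eqref{S-C}, and since $\beta$ is nondecreasing and positive one has $t^2\beta(t)\ge t^2\beta(t_0)\to+\infty$. Hence all conclusions of Theorem~\ref{ztt3.1} are available. From part (i), using $(\alpha-1)\theta-1>0$, one gets $\int_{t_0}^{+\infty} t(\|\dot x(t)\|^2+\|\dot\lambda(t)\|^2)\,dt<+\infty$ and $\int_{t_0}^{+\infty} t\beta(t)\epsilon(t)\|x(t)\|^2\,dt<+\infty$; from part (iii), which is precisely where \eqref{SS-C} is needed, $\int_{t_0}^{+\infty} t\beta(t)\big(\mathcal{L}_{\rho}(x(t),\lambda^*)-\mathcal{L}_{\rho}(x^*,\lambda^*)\big)\,dt<+\infty$. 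Adding these three bounds and recalling the definition \eqref{zs17} of $W$ yields
$$\int_{t_0}^{+\infty} tW(t)\,dt<+\infty.$$

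Next I would drop the two non-positive terms in Lemma~\ref{lemma4.1.1}, leaving $\ddot h(t)+\frac{\alpha}{t}\dot h(t)+\theta t\dot W(t)\le \frac{\beta(t)\epsilon(t)}{2}\|x^*\|^2$. Multiplying by $t^\alpha$ rewrites the first two terms as $\frac{d}{dt}\big(t^\alpha\dot h(t)\big)$; integrating over $[t_0,t]$ and integrating by parts in $\int_{t_0}^t s^{\alpha+1}\dot W(s)\,ds$ gives
$$t^\alpha\dot h(t)\le C_0-\theta t^{\alpha+1}W(t)+\theta(\alpha+1)\int_{t_0}^t s^\alpha W(s)\,ds+\frac{\|x^*\|^2}{2}\int_{t_0}^t s^\alpha\beta(s)\epsilon(s)\,ds,$$
where $C_0=t_0^\alpha\dot h(t_0)+\theta t_0^{\alpha+1}W(t_0)$. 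The delicate point is that the boundary term $-\theta t^{\alpha+1}W(t)$ has the right sign: since $W\ge0$ it can simply be discarded. Dividing by $t^\alpha$ and taking positive parts (the two remaining integrals being nonnegative) yields $[\dot h(t)]_+\le \frac{|C_0|}{t^\alpha}+\frac{\theta(\alpha+1)}{t^\alpha}\int_{t_0}^t s^\alpha W(s)\,ds+\frac{\|x^*\|^2}{2t^\alpha}\int_{t_0}^t s^\alpha\beta(s)\epsilon(s)\,ds$. Integrating over $[t_0,+\infty)$ and applying Tonelli's theorem (all integrands being nonnegative) to swap the order of integration, each double integral collapses, e.g.
$$\int_{t_0}^{+\infty}\frac{1}{t^\alpha}\int_{t_0}^t s^\alpha W(s)\,ds\,dt=\frac{1}{\alpha-1}\int_{t_0}^{+\infty} sW(s)\,ds,$$
and the $\beta\epsilon$ term reduces likewise to $\frac{1}{\alpha-1}\int_{t_0}^{+\infty} s\beta(s)\epsilon(s)\,ds$, finite by \eqref{FastSR}, while $\int_{t_0}^{+\infty}|C_0|t^{-\alpha}\,dt<+\infty$ because $\alpha>3>1$. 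Combining these with the bound on $\int_{t_0}^{+\infty} tW(t)\,dt$ gives $\int_{t_0}^{+\infty}[\dot h]_+\,dt<+\infty$.

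Finally, for the limit I would use the standard device: set $p(t)=h(t)-\int_{t_0}^t[\dot h(s)]_+\,ds$, so that $\dot p=\dot h-[\dot h]_+=-[\dot h]_-\le0$ makes $p$ nonincreasing, while $p$ is bounded below since $h\ge0$ and $\int_{t_0}^{+\infty}[\dot h]_+\,ds<+\infty$. Hence $\lim_{t\to+\infty}p(t)$ exists, and as $\int_{t_0}^t[\dot h]_+\,ds$ converges, $h(t)=p(t)+\int_{t_0}^t[\dot h]_+\,ds$ converges to a finite limit. I expect the main obstacle to be concentrated in the first step: the whole argument rests on $\int_{t_0}^{+\infty} tW(t)\,dt<+\infty$, and it is exactly Theorem~\ref{ztt3.1}(iii), hence the strict scaling condition \eqref{SS-C}, that supplies the integrability of the primal-dual gap contribution to $tW(t)$ that the mere condition \eqref{S-C} would not guarantee.
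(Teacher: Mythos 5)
Your proposal is correct and follows essentially the same route as the paper's proof: drop the nonpositive terms in Lemma \ref{lemma4.1.1}, multiply by $t^{\alpha}$ to form $\frac{d}{dt}\bigl(t^{\alpha}\dot{h}(t)\bigr)$, integrate by parts in the $\dot W$ term, discard $-\theta t^{\alpha+1}W(t)$ using $W\ge 0$, collapse the double integrals (the paper cites Lemma \ref{lemma2.1.3}, you use Tonelli, which is the same computation), feed in $\int_{t_0}^{+\infty}tW(t)\,dt<+\infty$ from Theorem \ref{ztt3.1}(i) and (iii), and conclude via the monotone auxiliary function $h-\int[\dot h]_+$. Your explicit verification that Assumption \ref{as1} and \eqref{SS-C} supply all hypotheses of Theorem \ref{ztt3.1} (including $(\alpha-1)\theta-1>0$ and $t^2\beta(t)\to+\infty$) is a point the paper leaves implicit, but the argument is the same.
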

\begin{proof}
By  Lemma \ref{lemma4.1.1}, 
\begin{eqnarray*}
	\ddot{h}(t)+\frac{\alpha}{t}\dot{h}(t)+\theta t\dot{W}(t)&\leq&\frac{\beta(t)\epsilon(t)}{2}\|x^*\|^2, \quad \forall t\geq t_0.
\end{eqnarray*}
Multiplying its both sides by $t$ and then adding $\theta(\alpha+1)tW(t)$ to the both sides of the inequality, we have
\begin{eqnarray*}
 	t\ddot{h}(t)+\alpha\dot{h}(t)+\theta (t^2\dot{W}(t)+(\alpha+1)tW(t))
 	&\leq&\theta(\alpha+1)tW(t)+\frac{t\beta(t)\epsilon(t)}{2}\|x^*\|^2, \quad \forall t\geq t_0.
\end{eqnarray*}   
Multiplying its both sides by $t^{\alpha-1}$,  we have for any $t\geq t_0$,
\begin{eqnarray*}
	t^{\alpha}\ddot{h}(t)+\alpha t^{\alpha-1}\dot{h}(t)+\theta (t^{\alpha+1}\dot{W}(t)+(\alpha+1)t^\alpha W(t))
	&\leq&\theta(\alpha+1)t^{\alpha}W(t)+\frac{t^{\alpha}\beta(t)\epsilon(t)}{2}\|x^*\|^2.
\end{eqnarray*}      
This yields
\begin{eqnarray*}
	\frac{d}{dt}\left(t^{\alpha}\dot{h}(t)\right)+\theta\frac{d}{dt}\left(t^{\alpha+1}W(t)\right)\leq\theta(\alpha+1)t^{\alpha}W(t)+\frac{t^{\alpha}\beta(t)\epsilon(t)}{2}\|x^*\|^2, \quad \forall t\geq t_0.
\end{eqnarray*}   
Integrating it over  $[t_{0},t]$, we get
\begin{eqnarray*}
	t^{\alpha}\dot{h}(t)
	&\leq& t_{0}^{\alpha}\dot{h}(t_{0})+\theta t_{0}^{\alpha+1}W(t_{0})-\theta t^{\alpha+1}W(t)+\theta(\alpha+1)\int_{t_{0}}^{t}s^{\alpha}W(s)ds\\
	&&+\frac{\|x^*\|^2}{2}\int_{t_{0}}^{t}s^{\alpha}\beta(s)\epsilon(s)ds\\
	&\leq&C_3+\theta(\alpha+1)\int_{t_{0}}^{t}s^{\alpha}W(s)ds+\frac{\|x^*\|^2}{2}\int_{t_{0}}^{t}s^{\alpha}\beta(s)\epsilon(s)ds,
\end{eqnarray*} 
where $C_3\geq t_{0}^{\alpha}\dot{h}(t_{0})+\theta t_{0}^{\alpha+1}W(t_{0})$ is a nonnegative constant.
It follows that
\begin{eqnarray*}
\dot{h}(t)
&\leq&\frac{C_3}{t^{\alpha}}+\frac{\theta(\alpha+1)}{t^{\alpha}}\int_{t_{0}}^{t}s^{\alpha}W(s)ds+\frac{\|x^*\|^2}{2t^{\alpha}}\int_{t_{0}}^{t}s^{\alpha}\beta(s)\epsilon(s)ds,
\end{eqnarray*} 
and so
\begin{eqnarray*}
	[\dot{h}(t)]_{+}
	&\leq&\frac{C_3}{t^{\alpha}}+\frac{\theta(\alpha+1)}{t^{\alpha}}\int_{t_{0}}^{t}s^{\alpha}W(s)ds+\frac{\|x^*\|^2}{2t^{\alpha}}\int_{t_{0}}^{t}s^{\alpha}\beta(s)\epsilon(s)ds, \quad \forall t\geq t_0.
\end{eqnarray*} 
Integrating it over $[t_0,+\infty)$, we obtain 
\begin{eqnarray*}
	\int_{t_{0}}^{+\infty}[\dot{h}(t)]_{+}dt
	&\leq&C_3\int_{t_{0}}^{+\infty}\frac{1}{t^{\alpha}}dt+\theta(\alpha+1)\int_{t_{0}}^{+\infty}\frac{1}{t^{\alpha}}\left(\int_{t_{0}}^{t}s^{\alpha}W(s)ds\right)dt\\
	&&+\frac{\|x^*\|^2}{2}\int_{t_{0}}^{+\infty}\frac{1}{t^{\alpha}}\left(\int_{t_{0}}^{t}s^{\alpha}\beta(s)\epsilon(s)ds\right)dt\\
	&=&\frac{C_3}{(\alpha-1)t_{0}^{\alpha-1}}+\theta(\alpha+1)\int_{t_{0}}^{+\infty}\frac{1}{t^{\alpha}}\left(\int_{t_{0}}^{t}s^{\alpha}W(s)ds\right)dt\\	&&+\frac{\|x^*\|^2}{2}\int_{t_{0}}^{+\infty}\frac{1}{t^{\alpha}}\left(\int_{t_{0}}^{t}s^{\alpha}\beta(s)\epsilon(s)ds\right)dt.
\end{eqnarray*} 
Applying Lemma \ref{lemma2.1.3} with $K(t):=tW(t)$ and  $K(t):=t\beta(t)\epsilon(t)$, we get
$$\int_{t_{0}}^{+\infty}\frac{1}{t^{\alpha}}\left(\int_{t_{0}}^{t}s^{\alpha}W(s)ds\right)dt=\frac{1}{\alpha-1}\int_{t_{0}}^{+\infty}tW(t)dt$$ 
and
$$\int_{t_{0}}^{+\infty}\frac{1}{t^{\alpha}}\left(\int_{t_{0}}^{t}s^{\alpha}\beta(s)\epsilon(s)ds\right)dt=\frac{1}{\alpha-1}\int_{t_{0}}^{+\infty}t\beta(t)\epsilon(t)dt.$$
As a result, 
\begin{eqnarray}\label{sa-f3}
	\int_{t_{0}}^{+\infty}[\dot{h}(t)]_{+}dt
	\leq\frac{C_3}{(\alpha-1)t_{0}^{\alpha-1}}+\frac{\theta(\alpha+1)}{\alpha-1}\int_{t_{0}}^{+\infty}tW(t)dt	+\frac{\|x^*\|^2}{2(\alpha-1)}\int_{t_{0}}^{+\infty}t\beta(t)\epsilon(t)dt.
\end{eqnarray} 
By Theorem \ref{ztt3.1} and  using \eqref{zs17}, we have
\begin{eqnarray}\label{twint}
\int_{t_{0}}^{+\infty}tW(t)dt&=&\int_{t_{0}}^{+\infty}t\beta(t)(L_{\rho}(x(t),\lambda^*)-L_{\rho}(x^*,\lambda^*))dt+\frac{1}{2}\int_{t_{0}}^{+\infty}t\beta(t)\epsilon(t)\|x(t)\|^2dt\nonumber\\
&&+\frac{1}{2}\int_{t_{0}}^{+\infty}t(\|\dot{x}(t)\|^2+\|\dot{\lambda}(t)\|^2)dt<+\infty,
\end{eqnarray} 
which together with \eqref{sa-f3} and \eqref{FastSR} implies 
\begin{eqnarray*}
	\int_{t_{0}}^{+\infty}[\dot{h}(t)]_{+}dt<+\infty.
\end{eqnarray*} 
Define $\varphi: [t_0, +\infty)\rightarrow\mathbb{R}$ by 
$$\varphi(t)=h(t)-\int_{t_{0}}^{t}[\dot{h}(s)]_{+}ds.$$
Obviously, $\varphi(t)$ is  nonincreasing and bounded from below and so the limit $\lim_{t\rightarrow+\infty}\varphi(t)\in\mathbb{R}$ exists. As a result,  
$$\lim_{t\rightarrow+\infty}h(t)=\lim_{t\rightarrow+\infty}\varphi(t)+\int_{t_{0}}^{+\infty}[\dot{h}(t)]_{+}dt\in\mathbb{R}$$ 
exists.
\end{proof}

\begin{lemma}\label{lemma4.1.3}
	Suppose that Assumption \ref{as1}  and \eqref{SS-C} hold. 	Let $(x,\lambda): [t_{0},+\infty)\rightarrow \mathcal{X}\times\mathcal{Y}$ be a solution of System \eqref{z2} and  $(x^*,\lambda^*)\in\Omega$. Then,  for any $t\geq t_{0}$, the following inequality holds:
	\begin{eqnarray*}
		&&\quad\frac{\alpha}{t\beta(t)}\frac{d}{dt}\left(\|(\dot{x}(t), \dot{\lambda}(t))\|^2\right)+\theta\beta(t)\frac{d}{dt}\left(t\|A^T(\lambda(t)-\lambda^*)\|^2\right)\\
		&&\quad+2\langle \ddot{x}(t)+\frac{\alpha}{t}\dot{x}(t), A^T(\lambda(t)-\lambda^*)\rangle+(1-\theta)\beta(t)\|A^T(\lambda(t)-\lambda^*)\|^2\\
		&&\leq4\beta(t)\|\nabla f(x(t))-\nabla f(x^*)\|^2+4\beta(t)(\epsilon(t))^2\|x(t)\|^2+(1+2\rho^2\|A\|^2)\beta(t)\|Ax(t)-b\|^2.
	\end{eqnarray*} 
\end{lemma}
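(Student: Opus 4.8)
The plan is to produce the inequality pointwise by \emph{testing the primal equation against} $A^{T}(\lambda(t)-\lambda^{*})$ and by \emph{differentiating the kinetic energy} $\|(\dot x(t),\dot\lambda(t))\|^{2}$ along the flow, then closing everything with Young's inequality. Write $p(t):=A^{T}(\lambda(t)-\lambda^{*})$. The optimality conditions \eqref{zcs1} give $\nabla f(x^{*})+A^{T}\lambda^{*}=0$ and $Ax^{*}=b$, so the primal forcing simplifies as $\nabla f(x)+A^{T}\lambda=(\nabla f(x)-\nabla f(x^{*}))+p$; I would use this repeatedly, together with $Ax-b=A(x-x^{*})$.

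First I would handle the third summand. Taking the inner product of the primal equation of \eqref{z2} with $2p$ and substituting the dynamics yields
\[2\langle\ddot x+\tfrac{\alpha}{t}\dot x,p\rangle=-2\beta\langle\nabla f(x)-\nabla f(x^{*}),p\rangle-2\beta\|p\|^{2}-2\beta\theta t\langle A^{T}\dot\lambda,p\rangle-2\beta\rho\langle A^{T}(Ax-b),p\rangle-2\beta\epsilon\langle x,p\rangle.\]
The only genuinely coupled term is $-2\beta\theta t\langle A^{T}\dot\lambda,p\rangle=-\beta\theta t\,\tfrac{d}{dt}\|p\|^{2}$, and the product rule $\tfrac{d}{dt}(t\|p\|^{2})=\|p\|^{2}+t\tfrac{d}{dt}\|p\|^{2}$ rewrites it as $-\theta\beta\tfrac{d}{dt}(t\|p\|^{2})+\theta\beta\|p\|^{2}$. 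Transposing this exact derivative reproduces the second summand $\theta\beta\tfrac{d}{dt}(t\|p\|^{2})$ on the left, and adding the fourth summand $(1-\theta)\beta\|p\|^{2}$ collapses the coefficient of $\|p\|^{2}$ to $-2+\theta+(1-\theta)=-1$; this is exactly why the constant $1-\theta$ (positive since $\theta<\tfrac12$) is the one that appears.

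Next I would expand the first summand by differentiating $\|(\dot x,\dot\lambda)\|^{2}$ and inserting $\ddot x,\ddot\lambda$ from \eqref{z2}. The two $\theta t$-coupling contributions, $-2\beta\theta t\langle\dot x,A^{T}\dot\lambda\rangle$ from the primal line and $+2\beta\theta t\langle\dot\lambda,A\dot x\rangle$ from the dual line, cancel, so only the forcing cross products and a dual feasibility product $\langle\dot\lambda,Ax-b\rangle$ survive. Assembling all four left-hand terms reduces the expression to the coercive term $-\beta\|p\|^{2}$ together with the three cross products $-2\beta\langle\nabla f(x)-\nabla f(x^{*}),p\rangle$, $-2\beta\epsilon\langle x,p\rangle$, $-2\beta\rho\langle A^{T}(Ax-b),p\rangle$ (plus the feasibility mass carried by the kinetic-energy term). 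I would then apply Young's inequality to each cross product, spending the budget $\beta\|p\|^{2}$ in the proportions $\tfrac14+\tfrac14+\tfrac12=1$: the gradient and Tikhonov products give $4\beta\|\nabla f(x)-\nabla f(x^{*})\|^{2}+\tfrac14\beta\|p\|^{2}$ and $4\beta\epsilon^{2}\|x\|^{2}+\tfrac14\beta\|p\|^{2}$, while the $\rho$-product, after $\|A^{T}(Ax-b)\|\le\|A\|\,\|Ax-b\|$, gives $2\rho^{2}\|A\|^{2}\beta\|Ax-b\|^{2}+\tfrac12\beta\|p\|^{2}$; the three $\|p\|^{2}$ pieces consume $-\beta\|p\|^{2}$ exactly.

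The step I expect to be the main obstacle is the exact bookkeeping that makes every constant close simultaneously. One must verify that the $\theta t$ couplings cancel in \emph{both} the $\langle\ddot x+\tfrac{\alpha}{t}\dot x,p\rangle$ identity and the kinetic-energy derivative, that the choice $1-\theta$ leaves precisely $-\beta\|p\|^{2}$ (so the Young budget $\tfrac14+\tfrac14+\tfrac12$ is neither over- nor under-spent), and — the most delicate point — that the feasibility mass produced by the $\tfrac{\alpha}{t\beta}$-weighted kinetic-energy term combines with the $\rho$-cross product so that the coefficient in front of $\beta\|Ax-b\|^{2}$ comes out as exactly $1+2\rho^{2}\|A\|^{2}$, and not something with a residual $t$-dependent weight. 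The argument is purely algebraic and pointwise: no integration is needed here, and beyond \eqref{zcs1} the only structural input is the convexity and $L$-Lipschitz gradient of $f$, entering implicitly through the optimality relation $\nabla f(x^{*})+A^{T}\lambda^{*}=0$.
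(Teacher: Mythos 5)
Your reduction of the second, third, and fourth left-hand terms is correct and is, in differential form, part of what the paper also exploits: testing the primal equation of \eqref{z2} against $2p(t)$ with $p:=A^T(\lambda-\lambda^*)$, rewriting $-2\theta t\beta\langle A^T\dot\lambda,p\rangle=-\theta\beta\frac{d}{dt}\left(t\|p\|^2\right)+\theta\beta\|p\|^2$, and applying Young with weights $\tfrac14+\tfrac14+\tfrac12$ indeed gives
\[
\theta\beta\frac{d}{dt}\left(t\|p\|^2\right)+2\langle\ddot x+\tfrac{\alpha}{t}\dot x,p\rangle+(1-\theta)\beta\|p\|^2\le 4\beta\|\nabla f(x)-\nabla f(x^*)\|^2+4\beta\epsilon^2\|x\|^2+2\rho^2\|A\|^2\beta\|Ax-b\|^2.
\]
The genuine gap is the kinetic term. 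Since this allocation spends the \emph{entire} coercive budget $-\beta\|p\|^2$, what remains to prove is the pointwise bound $\frac{\alpha}{t\beta}\frac{d}{dt}\|(\dot x,\dot\lambda)\|^2\le\beta\|Ax-b\|^2$, and your plan for it (substitute $\ddot x,\ddot\lambda$ from \eqref{z2}, then Young) cannot deliver it. The substitution yields, after the $\theta t$-cancellation you note,
\[
\frac{\alpha}{t\beta}\frac{d}{dt}\|(\dot x,\dot\lambda)\|^2=-\frac{2\alpha^2}{t^2\beta}\|(\dot x,\dot\lambda)\|^2-\frac{2\alpha}{t}\bigl\langle\dot x,\,\nabla f(x)-\nabla f(x^*)+p+\rho A^T(Ax-b)+\epsilon x\bigr\rangle+\frac{2\alpha}{t}\langle\dot\lambda,Ax-b\rangle,
\]
so the surviving cross products are not only ``feasibility mass'': $\dot x$ is paired with every forcing component, \emph{including $p$ itself}. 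Any Young estimate of $-\frac{2\alpha}{t}\langle\dot x,p\rangle$ produces a positive multiple of $\beta\|p\|^2$ for which no budget is left, and no reallocation works: keeping the gradient and Tikhonov coefficients at most $4$ forces their $\|p\|^2$-shares above $\tfrac14$ each, the $\|\dot x\|^2$ absorption capacity is only $\frac{2\alpha^2}{t^2\beta}$ (shared among four cross terms), which forces the $\|p\|^2$-share of the $\langle\dot x,p\rangle$ term above $\tfrac12$, and the $\rho$-term needs a strictly positive share, so the total demand strictly exceeds the budget $1$. In fact the leftover claim is simply false: take a solution with data at $t_0$ satisfying $Ax(t_0)=b$, $\dot\lambda(t_0)=0$, $\dot x(t_0)=-sF$ where $F:=\nabla f(x(t_0))-\nabla f(x^*)+p(t_0)+\epsilon(t_0)x(t_0)\ne0$ and $s>0$ is small; then the left side equals $\frac{2\alpha s}{t_0}\|F\|^2-\frac{2\alpha^2 s^2}{t_0^2\beta(t_0)}\|F\|^2>0$ while the right side vanishes.

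The paper closes this step by a mechanism that never substitutes the dynamics into the kinetic term. It starts from the identity
\[
\beta^2\|\nabla f(x)-\nabla f(x^*)+\rho A^T(Ax-b)+\epsilon x\|^2+\beta^2\|Ax-b\|^2=\Bigl\|\ddot x+\tfrac{\alpha}{t}\dot x+\beta A^T(\lambda-\lambda^*+\theta t\dot\lambda)\Bigr\|^2+\Bigl\|\ddot\lambda+\tfrac{\alpha}{t}\dot\lambda-\theta t\beta A\dot x\Bigr\|^2,
\]
in which the $p$- and $\theta t\dot\lambda$-parts of the primal forcing are moved to the dynamics side, so the norm that eventually receives $\|u+v\|^2\le2\|u\|^2+2\|v\|^2$ contains no $p$ at all. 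Expanding the right-hand side and discarding two nonnegative complete squares, namely $\|(\ddot x,\ddot\lambda)+\theta t\beta(A^T\dot\lambda,-A\dot x)\|^2$ and $\frac{\alpha^2}{t^2}\|(\dot x,\dot\lambda)\|^2$, produces all four left-hand terms of the lemma simultaneously: the kinetic term arises as the cross term $\frac{2\alpha}{t}\langle(\dot x,\dot\lambda),(\ddot x,\ddot\lambda)\rangle$ of a square (the second derivatives are kept intact, never estimated), and your identity $\|A^T(\lambda-\lambda^*+\theta t\dot\lambda)\|^2-\theta^2t^2\|A^T\dot\lambda\|^2=\theta\frac{d}{dt}\left(t\|p\|^2\right)+(1-\theta)\|p\|^2$ appears there too. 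To repair your argument you would need to replace the Young step on the kinetic term by this kind of complete-square bookkeeping; term-by-term estimation of the velocity--forcing cross products cannot reach the stated constants.
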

\begin{proof}
By  the definition of $\mathcal{L}_{\rho}$ and using \eqref{zcs1}, we have
\begin{eqnarray*}
	&&\left\|\beta(t)\left(\nabla f(x(t))-\nabla f(x^*)+\rho A^T(Ax(t)-b)+\epsilon(t)x(t)\right)\right\|^2\\
		&&\quad=\left\|\beta(t)\left(\nabla_{x}\mathcal{L}_{\rho}(x(t),\lambda(t)+\theta t\dot{\lambda}(t))+\epsilon(t)x(t)\right)+\beta(t)A^T(\lambda^*-\lambda(t)-\theta t\dot{\lambda}(t))\right\|^2\\
	&&\quad=\left\|\ddot{x}(t)+\frac{\alpha}{t}\dot{x}(t)+\beta(t)A^T(\lambda(t)-\lambda^*+\theta t\dot{\lambda}(t))\right\|^2\\
	&&\quad=\left\|\ddot{x}(t)+\frac{\alpha}{t}\dot{x}(t)\right\|^2+
	(\beta(t))^2\left\|A^T(\lambda(t)-\lambda^*+\theta t\dot{\lambda}(t))\right\|^2\\
	&&\qquad+2\beta(t)\langle\ddot{x}(t)+\frac{\alpha}{t}\dot{x}(t), A^T(\lambda(t)-\lambda^*)\rangle+2\theta t\beta(t)\langle\ddot{x}(t), A^T\dot{\lambda}(t)\rangle\\
	&&\qquad+2\theta\alpha\beta(t)\langle\dot{x}(t), A^T\dot{\lambda}(t)\rangle,
\end{eqnarray*}
where the second equality follows from the first equation of \eqref{z2}.
Using the second equation of  \eqref{z2}, we have
\begin{eqnarray*}
\left\|\beta(t)(Ax(t)-b)\right\|^2&=&\left\|\beta(t)\left(A(x(t)+\theta t\dot{x}(t))-b\right)-\theta t\beta(t)A\dot{x}(t)\right\|^2\\
&=&\left\|\ddot{\lambda}(t)+\frac{\alpha}{t}\dot{\lambda}(t)-\theta t\beta(t)A\dot{x}(t)\right\|^2\\
&=&\left\|\ddot{\lambda}(t)+\frac{\alpha}{t}\dot{\lambda}(t)\right\|^2+\theta^2t^2(\beta(t))^2\left\|A\dot{x}(t)\right\|^2\\
&&-2\theta t\beta(t)\langle\ddot{\lambda}(t), A\dot{x}(t)\rangle-2\theta \alpha\beta(t) \langle\dot{\lambda}(t), A\dot{x}(t)\rangle.
\end{eqnarray*}
It follows that
\begin{eqnarray*}
&&\left\|\beta(t)\left(\nabla f(x(t))-\nabla f(x^*)+\rho A^T(Ax(t)-b)+\epsilon(t)x(t)\right)\right\|^2+\left\|\beta(t)(Ax(t)-b)\right\|^2\\
&&\quad=\left\|(\ddot{x}(t),\ddot{\lambda}(t))+\frac{\alpha}{t}(\dot{x}(t),\dot{\lambda}(t))\right\|^2+
(\beta(t))^2\left\|A^T(\lambda(t)-\lambda^*+\theta t\dot{\lambda}(t))\right\|^2\\
&&\qquad+2\beta(t)\langle\ddot{x}(t)+\frac{\alpha}{t}\dot{x}(t), A^T(\lambda(t)-\lambda^*)\rangle+2\theta t\beta(t)\langle\ddot{x}(t), A^T\dot{\lambda}(t)\rangle\\
&&\qquad+\theta^2t^2(\beta(t))^2\left\|A\dot{x}(t)\right\|^2-2\theta t\beta(t)\langle\ddot{\lambda}(t), A\dot{x}(t)\rangle.
\end{eqnarray*}
Since 
\begin{eqnarray*}
&&\theta^2t^2(\beta(t))^2\left\|A\dot{x}(t)\right\|^2+2\theta t\beta(t)\langle\ddot{x}(t), A^T\dot{\lambda}(t)\rangle-2\theta t\beta(t)\langle\ddot{\lambda}(t), A\dot{x}(t)\rangle\\
&&\quad=\theta^2t^2(\beta(t))^2\left\|(A^T\dot{\lambda}(t), -A\dot{x}(t))\right\|^2-\theta^2t^2(\beta(t))^2\left\|A^T\dot{\lambda}(t)\right\|^2\\
&&\qquad+2\theta t\beta(t)\langle(\ddot{x}(t),\ddot{\lambda}(t)), (A^T\dot{\lambda}(t), -A\dot{x}(t))\rangle\\
&&\quad=\left\|\theta t\beta(t)(A^T\dot{\lambda}(t), -A\dot{x}(t))+(\ddot{x}(t),\ddot{\lambda}(t))\right\|^2-\theta^2t^2(\beta(t))^2\left\|A^T\dot{\lambda}(t)\right\|^2\\
&&\qquad-\left\|(\ddot{x}(t),\ddot{\lambda}(t))\right\|^2\\
&&\quad\geq-\theta^2t^2(\beta(t))^2\left\|A^T\dot{\lambda}(t)\right\|^2-\left\|(\ddot{x}(t),\ddot{\lambda}(t))\right\|^2
\end{eqnarray*}
and
\begin{eqnarray*}
	&&(\beta(t))^2\left\|A^T(\lambda(t)-\lambda^*+\theta t\dot{\lambda}(t))\right\|^2-\theta^2t^2(\beta(t))^2\left\|A^T\dot{\lambda}(t)\right\|^2\\
	&&\quad=(\beta(t))^2\left(\|A^T(\lambda(t)-\lambda^*)\|^2+2\theta t\langle A^T(\lambda(t)-\lambda^*), A^T\dot{\lambda}(t)\rangle\right)\\
	&&\quad=(\beta(t))^2\left(\theta\|A^T(\lambda(t)-\lambda^*)\|^2+2\theta t\langle A^T(\lambda(t)-\lambda^*), A^T\dot{\lambda}(t)\rangle\right)\\
	&&\qquad+(1-\theta)(\beta(t))^2\|A^T(\lambda(t)-\lambda^*)\|^2\\
	&&\quad=\theta(\beta(t))^2\frac{d}{dt}\left(t\|A^T(\lambda(t)-\lambda^*)\|^2\right)+(1-\theta)(\beta(t))^2\|A^T(\lambda(t)-\lambda^*)\|^2,
\end{eqnarray*}
we obtain 
\begin{eqnarray*}
	&&\left\|\beta(t)\left(\nabla f(x(t))-\nabla f(x^*)+\rho A^T(Ax(t)-b)+\epsilon(t)x(t)\right)\right\|^2+\left\|\beta(t)(Ax(t)-b)\right\|^2\\
	&&\quad\geq\left\|(\ddot{x}(t),\ddot{\lambda}(t))+\frac{\alpha}{t}(\dot{x}(t),\dot{\lambda}(t))\right\|^2-\left\|(\ddot{x}(t),\ddot{\lambda}(t))\right\|^2+(\beta(t))^2\left\|A^T(\lambda(t)-\lambda^*+\theta t\dot{\lambda}(t))\right\|^2\\
	&&\qquad-\theta^2t^2(\beta(t))^2\left\|A^T\dot{\lambda}(t)\right\|^2
	+2\beta(t)\langle\ddot{x}(t)+\frac{\alpha}{t}\dot{x}(t), A^T(\lambda(t)-\lambda^*)\rangle\\
	&&\quad=\left\|(\ddot{x}(t),\ddot{\lambda}(t))+\frac{\alpha}{t}(\dot{x}(t),\dot{\lambda}(t))\right\|^2-\left\|(\ddot{x}(t),\ddot{\lambda}(t))\right\|^2+	\theta(\beta(t))^2\frac{d}{dt}\left(t\|A^T(\lambda(t)-\lambda^*)\|^2\right)\\
	&&\qquad
    +(1-\theta)(\beta(t))^2\|A^T(\lambda(t)-\lambda^*)\|^2+2\beta(t)\langle\ddot{x}(t)+\frac{\alpha}{t}\dot{x}(t), A^T(\lambda(t)-\lambda^*)\rangle\\
    &&\quad\geq\frac{2\alpha}{t}\langle(\dot{x}(t),\dot{\lambda}(t)), (\ddot{x}(t),\ddot{\lambda}(t))\rangle+\theta(\beta(t))^2\frac{d}{dt}\left(t\|A^T(\lambda(t)-\lambda^*)\|^2\right)\\
    &&\qquad
    +(1-\theta)(\beta(t))^2\|A^T(\lambda(t)-\lambda^*)\|^2+2\beta(t)\langle\ddot{x}(t)+\frac{\alpha}{t}\dot{x}(t), A^T(\lambda(t)-\lambda^*)\rangle,
\end{eqnarray*}
which  implies
\begin{eqnarray*}
	&&\beta(t)\left\|\nabla f(x(t))-\nabla f(x^*)+\rho A^T(Ax(t)-b)+\epsilon(t)x(t)\right\|^2+\beta(t)\left\|Ax(t)-b\right\|^2\\
	&&\quad\geq\frac{2\alpha}{t\beta(t)}\langle(\dot{x}(t),\dot{\lambda}(t)), (\ddot{x}(t),\ddot{\lambda}(t))\rangle+\theta\beta(t)\frac{d}{dt}\left(t\|A^T(\lambda(t)-\lambda^*)\|^2\right)\\
	&&\qquad
	+(1-\theta)\beta(t)\|A^T(\lambda(t)-\lambda^*)\|^2+2\langle\ddot{x}(t)+\frac{\alpha}{t}\dot{x}(t), A^T(\lambda(t)-\lambda^*)\rangle\\
	&&\quad=\frac{\alpha}{t\beta(t)}\frac{d}{dt}\left(\|(\dot{x}(t),\dot{\lambda}(t))\|^2\right)+\theta\beta(t)\frac{d}{dt}\left(t\|A^T(\lambda(t)-\lambda^*)\|^2\right)\\
	&&\qquad+(1-\theta)\beta(t)\|A^T(\lambda(t)-\lambda^*)\|^2+2\langle\ddot{x}(t)+\frac{\alpha}{t}\dot{x}(t), A^T(\lambda(t)-\lambda^*)\rangle.
\end{eqnarray*}
Since 
\begin{eqnarray*}
&&\left\|\nabla f(x(t))-\nabla f(x^*)+\rho A^T(Ax(t)-b)+\epsilon(t)x(t)\right\|^2\\
&&\quad\leq2\left\|\nabla f(x(t))-\nabla f(x^*)+\epsilon(t)x(t)\right\|^2+2\rho^2\|A\|^2\|Ax(t)-b\|^2, 
\end{eqnarray*}
we have 
\begin{eqnarray*}
	&&\quad\frac{\alpha}{t\beta(t)}\frac{d}{dt}\left(\|(\dot{x}(t), \dot{\lambda}(t))\|^2\right)+\theta\beta(t)\frac{d}{dt}\left(t\|A^T(\lambda(t)-\lambda^*)\|^2\right)\\
	&&\quad+2\langle \ddot{x}(t)+\frac{\alpha}{t}\dot{x}(t), A^T(\lambda(t)-\lambda^*)\rangle+(1-\theta)\beta(t)\|A^T(\lambda(t)-\lambda^*)\|^2\\
	&&\leq4\beta(t)\|\nabla f(x(t))-\nabla f(x^*)\|^2+4\beta(t)(\epsilon(t))^2\|x(t)\|^2+(1+2\rho^2\|A\|^2)\beta(t)\|Ax(t)-b\|^2.
\end{eqnarray*} 
\end{proof}

\begin{lemma}\label{lemma4.1.4}
	Suppose that Assumption \ref{as1} and conditions \eqref{SS-C}  and \eqref{FastSR}  hold.
	Let $(x,\lambda): [t_{0},+\infty)\rightarrow \mathcal{X}\times\mathcal{Y}$ be a solution of System \eqref{z2} and  $(x^*,\lambda^*)\in\Omega$. Then, it holds:
	$$\int_{t_{0}}^{+\infty}t\beta(t)\|A^T(\lambda(t)-\lambda^*)\|^2dt
	<+\infty.$$
\end{lemma}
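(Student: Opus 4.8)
The plan is to integrate the pointwise estimate of Lemma \ref{lemma4.1.3} against the weight $t$ and to show that, after integration by parts, the integral $\int t\beta(t)\|A^T(\lambda(t)-\lambda^*)\|^2dt$ survives with a \emph{strictly positive} coefficient, while every other contribution is either finite by Theorem \ref{ztt3.1} or absorbed back into this integral. First I would multiply the inequality in Lemma \ref{lemma4.1.3} by $t$ and integrate over $[t_0,T]$. The right-hand side then becomes $4\int_{t_0}^T t\beta\|\nabla f(x)-\nabla f(x^*)\|^2dt+4\int_{t_0}^T t\beta\epsilon^2\|x\|^2dt+(1+2\rho^2\|A\|^2)\int_{t_0}^T t\beta\|Ax-b\|^2dt$. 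The first integral is finite by Theorem \ref{ztt3.1}(iii); the last is finite by Theorem \ref{ztt3.1}(i) since $\rho>0$; and the middle one is finite because $\epsilon$ is nonincreasing, so $\epsilon(t)^2\le\epsilon(t_0)\epsilon(t)$ and Theorem \ref{ztt3.1}(i) applies. Thus the whole right-hand side is bounded uniformly in $T$.

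The heart of the argument is the term $\theta\int_{t_0}^T t\beta\frac{d}{dt}\bigl(t\|A^T(\lambda-\lambda^*)\|^2\bigr)dt$. Integrating by parts produces the nonnegative boundary term $\theta T^2\beta(T)\|A^T(\lambda(T)-\lambda^*)\|^2$ (which only helps, being on the correct side) and the integral $-\theta\int_{t_0}^T(\beta+t\dot\beta)\,t\|A^T(\lambda-\lambda^*)\|^2dt$. Combining the latter with the surviving left-hand term $(1-\theta)\int_{t_0}^T t\beta\|A^T(\lambda-\lambda^*)\|^2dt$ gives the weight $(1-2\theta)\beta-\theta t\dot\beta$; here the strict scaling condition \eqref{SS-C}, which provides a constant $b>0$ with $t\dot\beta\le\bigl(\tfrac{1-2\theta}{\theta}-b\bigr)\beta$, yields $(1-2\theta)\beta-\theta t\dot\beta\ge\theta b\,\beta$, so the target integral reappears with the strictly positive coefficient $\theta b$.

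It then remains to show the two remaining left-hand contributions are harmless. The term $\int_{t_0}^T\frac{\alpha}{\beta}\frac{d}{dt}\|(\dot x,\dot\lambda)\|^2dt$, after integration by parts, produces a boundary term that tends to $0$ because $\|(\dot x,\dot\lambda)\|=\mathcal O(1/t)$ and $\beta\ge\beta(t_0)$, together with $\alpha\int\frac{\dot\beta}{\beta^2}\|(\dot x,\dot\lambda)\|^2dt$, which is finite since \eqref{S-C} gives $\frac{\dot\beta}{\beta^2}=\mathcal O\bigl(\tfrac{1}{t\beta}\bigr)$ and $\|(\dot x,\dot\lambda)\|^2=\mathcal O(1/t^2)$. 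The genuinely delicate piece is the cross term $2\int_{t_0}^T t\langle\ddot x+\tfrac\alpha t\dot x,\,A^T(\lambda-\lambda^*)\rangle dt$. I would integrate the $t\langle\ddot x,\cdot\rangle$ part by parts, which yields the boundary term $2T\langle\dot x(T),A^T(\lambda(T)-\lambda^*)\rangle$ — bounded via the $\mathcal O(1/t)$ decay of $\dot x$ and boundedness of $\lambda$ from Theorem \ref{ztt3.1} — plus $-2\int t\langle\dot x,A^T\dot\lambda\rangle dt$, controlled by $\|A\|\int t(\|\dot x\|^2+\|\dot\lambda\|^2)dt<+\infty$ from Theorem \ref{ztt3.1}(i), and a residual $(2\alpha-2)\int\langle\dot x,A^T(\lambda-\lambda^*)\rangle dt$.

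This last integral is only $\mathcal O(1/t)$ and hence \emph{not} absolutely convergent, so it cannot be bounded directly: the main obstacle is precisely here. I would resolve it with a weighted Young inequality, $|\langle\dot x,A^T(\lambda-\lambda^*)\rangle|\le\frac{\delta t\beta}{2}\|A^T(\lambda-\lambda^*)\|^2+\frac{1}{2\delta t\beta}\|\dot x\|^2$, choosing $\delta$ small enough that $(\alpha-1)\delta<\theta b$; the first piece is then absorbed into the $\theta b\int t\beta\|A^T(\lambda-\lambda^*)\|^2dt$ on the left, while $\int\frac{1}{t\beta}\|\dot x\|^2dt=\mathcal O\bigl(\int t^{-3}dt\bigr)<+\infty$. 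Collecting all the bounds and discarding the nonnegative boundary term $\theta T^2\beta(T)\|A^T(\lambda(T)-\lambda^*)\|^2$, the quantity $\frac{\theta b}{2}\int_{t_0}^T t\beta\|A^T(\lambda-\lambda^*)\|^2dt$ stays bounded uniformly in $T$, which gives the claim upon letting $T\to+\infty$.
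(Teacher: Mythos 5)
Your proof is correct, and it takes a genuinely different route from the paper's. The paper does not work from Lemma \ref{lemma4.1.3} alone: it first \emph{adds} the inequality of Lemma \ref{lemma4.1.1} to it (so that the terms $\ddot{h}+\frac{\alpha}{t}\dot{h}$ and $\theta t\dot{W}$ ride along, later controlled via $h\ge 0$ and $\int_{t_0}^{+\infty}tW(t)\,dt<+\infty$), then multiplies by the weight $t^{\alpha}$, integrates to get the terms $J_1,\dots,J_5$, derives a pointwise bound for $\theta t^{\alpha+1}\beta(t)\|A^T(\lambda(t)-\lambda^*)\|^2$, divides by $t^{\alpha}$ and integrates a \emph{second} time, invoking Lemma \ref{lemma2.1.3} to collapse the double integrals; the absorption then works because $\frac{(\alpha-b-1)\theta}{\alpha-1}<\theta$. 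You instead use only Lemma \ref{lemma4.1.3}, with the natural weight $t$ and a single integration pass, which eliminates the need for Lemma \ref{lemma4.1.1}, Lemma \ref{lemma2.1.3}, and the two-pass structure altogether. The other substantive difference is the treatment of the non--absolutely-convergent cross term: the paper converts $-\int\langle A^T(\lambda-\lambda^*),\dot{x}\rangle\,dt$ by integration by parts into $\int\langle Ax-b,\dot{\lambda}\rangle\,dt$ plus boundary terms, and then uses Young's inequality together with $\|Ax(t)-b\|=\mathcal{O}\bigl(\frac{1}{t^2\beta(t)}\bigr)$ and $\int t\beta\|Ax-b\|^2\,dt<+\infty$ from Theorem \ref{ztt3.1}; you apply a weighted Young inequality directly to $\langle\dot{x},A^T(\lambda-\lambda^*)\rangle$ with a small parameter $\delta$ and absorb the $\delta$-piece into the target integral, which is legitimate since $\int_{t_0}^{T}t\beta\|A^T(\lambda-\lambda^*)\|^2dt$ is finite for each fixed $T$ and the margin $b>0$ from \eqref{SS-C} leaves room to choose $(\alpha-1)\delta<\theta b$. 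Both arguments ultimately rest on the same two pillars — the strict scaling margin $b$ and the integrability estimates of Theorem \ref{ztt3.1} (parts (i) and (iii), the latter requiring \eqref{SS-C}, plus $\epsilon^2\le\epsilon(t_0)\epsilon$) — but yours is shorter and more self-contained, while the paper's version reuses machinery shared with the proof of Lemma \ref{lemma4.1.2} and keeps every absorption explicit in the coefficients. One cosmetic simplification available to you: the terms you work to show finite (the boundary term $\frac{\alpha}{\beta(T)}\|(\dot{x}(T),\dot{\lambda}(T))\|^2$ and the integral $\alpha\int\frac{\dot{\beta}}{\beta^2}\|(\dot{x},\dot{\lambda})\|^2dt$) are nonnegative (the latter because $\beta$ is nondecreasing under Assumption \ref{as1}), and since they sit on the favorable side of the inequality they can simply be discarded.
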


\begin{proof}
By  Lemma \ref{lemma4.1.1} and Lemma \ref{lemma4.1.3},
\begin{eqnarray*}
	&&\quad\ddot{h}(t)+\frac{\alpha}{t}\dot{h}(t)+\theta t\dot{W}(t)+\frac{\alpha}{t\beta(t)}\frac{d}{dt}\left(\|(\dot{x}(t), \dot{\lambda}(t))\|^2\right)+\theta\beta(t)\frac{d}{dt}\left(t\|A^T(\lambda(t)-\lambda^*)\|^2\right)\\
	&&\quad+2\langle \ddot{x}(t)+\frac{\alpha}{t}\dot{x}(t), A^T(\lambda(t)-\lambda^*)\rangle\\
	&&\leq(4-\dfrac{1}{2L})\beta(t)\|\nabla f(x(t))-\nabla f(x^*)\|^2
	+(1+2\rho^2\|A\|^2-\frac{\rho}{2})\beta(t)\|Ax(t)-b\|^2\\
	&&\quad+\frac{\beta(t)\epsilon(t)}{2}\|x^*\|^2+4\beta(t)(\epsilon(t))^2\|x(t)\|^2+(\theta-1)\beta(t)\|A^T(\lambda(t)-\lambda^*)\|^2\\
	&&\leq C_4\beta(t)\|\nabla f(x(t))-\nabla f(x^*)\|^2+C_5\beta(t)\|Ax(t)-b\|^2\\
	&&\quad+\frac{\beta(t)\epsilon(t)}{2}\|x^*\|^2+4\beta(t)(\epsilon(t))^2\|x(t)\|^2+(\theta-1)\beta(t)\|A^T(\lambda(t)-\lambda^*)\|^2,
\end{eqnarray*}	
where $C_4=[4-\dfrac{1}{2L}]_{+}$ and $C_5=[1+2\rho^2\|A\|^2-\frac{\rho}{2}]_{+}$. Multiplying the above inequality by $t^{\alpha}$ and then integrating over $[t_{0},t]$,  we have 
\begin{eqnarray}\label{zd5}
 &&J_1(t)+\theta J_2(t)+\alpha J_3(t)+\theta J_4(t)+2J_5(t)\nonumber\\
	&&\quad\leq C_4\int_{t_{0}}^{t}s^{\alpha}\beta(s)\|\nabla f(x(s))-\nabla f(x^*)\|^2ds+C_5\int_{t_{0}}^{t}s^{\alpha}\beta(s)\|Ax(s)-b\|^2ds\nonumber\\
	&&\qquad+\frac{\|x^*\|^2}{2}\int_{t_{0}}^{t}s^{\alpha}\beta(s)\epsilon(s)ds+4\int_{t_{0}}^{t}s^{\alpha}\beta(s)(\epsilon(s))^2\|x(s)\|^2ds\nonumber\\
	&&\qquad+(\theta-1)\int_{t_{0}}^{t}s^{\alpha}\beta(s)\|A^T(\lambda(s)-\lambda^*)\|^2ds, \quad\forall t\geq t_0,
\end{eqnarray}	
where 
$$J_1(t):=\int_{t_{0}}^{t}(s^{\alpha}\ddot{h}(s)+\alpha s^{\alpha-1}\dot{h}(s))ds, \quad J_2(t):=\int_{t_{0}}^{t} s^{\alpha+1}\dot{W}(s)ds,$$
$$J_3(t):=\int_{t_{0}}^{t}\frac{ s^{\alpha-1}}{\beta(s)}d\left(\|(\dot{x}(s), \dot{\lambda}(s))\|^2\right), \quad J_4(t):=\int_{t_{0}}^{t} s^{\alpha}\beta(s)d\left(s\|A^T(\lambda(s)-\lambda^*)\|^2\right),$$
$$J_5(t):=\int_{t_{0}}^{t}\langle s^{\alpha}\ddot{x}(s)+\alpha s^{\alpha-1}\dot{x}(s), A^T(\lambda(s)-\lambda^*)\rangle ds.$$

Next we continue to compute $J_1(t), J_2(t), J_3(t), J_4(t), J_5(t)$.   Let $t\ge t_0$.
$$J_1(t)=\int_{t_{0}}^{t}(s^{\alpha}\ddot{h}(s)+\alpha s^{\alpha-1}\dot{h}(s))ds=\int_{t_{0}}^{t}d(s^{\alpha}\dot{h}(s))=t^{\alpha}\dot{h}(t)-t_0^{\alpha}\dot{h}(t_0),$$
which yields
\begin{eqnarray}\label{zd1}
0=J_1(t)-t^{\alpha}\dot{h}(t)+t_0^{\alpha}\dot{h}(t_0)\leq J_1(t) -t^{\alpha}\dot{h}(t)+t_0^{\alpha}|\dot{h}(t_0)|.
\end{eqnarray}
By using integration by parts, we get
$$J_2(t)=\int_{t_{0}}^{t}s^{\alpha+1}d(W(s))=t^{\alpha+1}W(t)-t_0^{\alpha+1}W(t_0)-(\alpha+1)\int_{t_{0}}^{t}s^{\alpha}W(s)ds,$$
which  implies 
\begin{eqnarray}\label{zd2}
0\leq t^{\alpha+1}W(t)=J_2(t)+t_0^{\alpha+1}W(t_0)+(\alpha+1)\int_{t_{0}}^{t}s^{\alpha}W(s)ds.
\end{eqnarray}
Applying  integration by parts to $J_3(t)$, we have
\begin{eqnarray*}
	J_3(t	)&=&\frac{ t^{\alpha-1}}{\beta(t)}\|(\dot{x}(t), \dot{\lambda}(t))\|^2-\frac{ t_{0}^{\alpha-1}}{\beta(t_{0})}\|(\dot{x}(t_{0}), \dot{\lambda}(t_{0}))\|^2-(\alpha-1)\int_{t_{0}}^{t}\frac{s^{\alpha-2}}{\beta(s)}\|(\dot{x}(s), \dot{\lambda}(s))\|^2ds\\
	&&+\int_{t_{0}}^{t}\frac{s^{\alpha-1}\dot{\beta}(s)}{(\beta(s))^2}\|(\dot{x}(s), \dot{\lambda}(s))\|^2ds\\
	&\geq&\frac{ t^{\alpha-1}}{\beta(t)}\|(\dot{x}(t), \dot{\lambda}(t))\|^2-\frac{ t_{0}^{\alpha-1}}{\beta(t_{0})}\|(\dot{x}(t_{0}), \dot{\lambda}(t_{0}))\|^2-\frac{\alpha-1}{t_{0}^2\beta(t_{0})}\int_{t_{0}}^{t}s^{\alpha}\|(\dot{x}(s), \dot{\lambda}(s))\|^2ds,
\end{eqnarray*}		
which yields
\begin{eqnarray}\label{zd3}
	0\leq\frac{ t^{\alpha-1}}{\beta(t)}\|(\dot{x}(t), \dot{\lambda}(t))\|^2\leq
	J_3(t)+\frac{ t_{0}^{\alpha-1}}{\beta(t_{0})}\|(\dot{x}(t_{0}), \dot{\lambda}(t_{0}))\|^2+\frac{\alpha-1}{t_{0}^2\beta(t_{0})}\int_{t_{0}}^{t}s^{\alpha}\|(\dot{x}(s), \dot{\lambda}(s))\|^2ds.
\end{eqnarray}	
Integration by parts to $J_4(t)$ gives	
\begin{eqnarray*}
	J_4(t)&=&t^{\alpha+1}\beta(t)\|A^T(\lambda(t)-\lambda^*)\|^2-t_0^{\alpha+1}\beta(t_0)\|A^T(\lambda(t_0)-\lambda^*)\|^2\\
	&&-\int_{t_{0}}^{t}s(\alpha s^{\alpha-1}\beta(s)+s^\alpha\dot{\beta}(s))\|A^T(\lambda(s)-\lambda^*)\|^2ds.
\end{eqnarray*}		
Remembering \eqref{SS-C}, from \eqref{zs19} we get
\begin{eqnarray*}
\alpha s^{\alpha-1}\beta(s)+s^\alpha\dot{\beta}(s)\le\left(\frac{1-2\theta}{\theta}-b+\alpha\right)s^{\alpha-1}\beta(s), \quad \forall s\geq t_0.
\end{eqnarray*}			
Thus, 
\begin{eqnarray}\label{zd4}
t^{\alpha+1}\beta(t)\|A^T(\lambda(t)-\lambda^*)\|^2&\le&
	J_4(t)+t_0^{\alpha+1}\beta(t_0)\|A^T(\lambda(t_0)-\lambda^*)\|^2\\
	&&+\left(\frac{1-2\theta}{\theta}-b+\alpha\right)\int_{t_{0}}^{t}s^{\alpha}\beta(s)\|A^T(\lambda(s)-\lambda^*)\|^2ds.\nonumber
\end{eqnarray}
Again by integration by parts to $J_5(t)$, we get		
\begin{eqnarray*}
	J_5(t)&=&\int_{t_{0}}^{t}\langle \frac{d}{ds}\left(s^{\alpha}\dot{x}(s)\right), A^T(\lambda(s)-\lambda^*)\rangle ds\\
	&=&t^{\alpha}\langle A^T(\lambda(t)-\lambda^*), \dot{x}(t)\rangle-t_0^{\alpha}\langle A^T(\lambda(t_0)-\lambda^*), \dot{x}(t_0)\rangle-\int_{t_{0}}^{t}s^{\alpha}\langle\dot{x}(s), A^T\dot{\lambda}(s)\rangle ds\\
&\ge&t^{\alpha}\langle A^T(\lambda(t)-\lambda^*), \dot{x}(t)\rangle-t_0^{\alpha}\langle A^T(\lambda(t_0)-\lambda^*), \dot{x}(t_0)\rangle-\frac{\max\{1, \|A\|^2\}}{2}\int_{t_{0}}^{t}s^{\alpha}(\|\dot{x}(s)\|^2+\|\dot{\lambda}(s)\|^2) ds,
\end{eqnarray*}	
which  leads to
\begin{eqnarray}\label{saj5}
0&\leq& J_5(t)-t^{\alpha}\langle A^T(\lambda(t)-\lambda^*), \dot{x}(t)\rangle+t_0^{\alpha}|\langle A^T(\lambda(t_0)-\lambda^*), \dot{x}(t_0)\rangle|\nonumber\\
&&+\frac{\max\{1, \|A\|^2\}}{2}\int_{t_{0}}^{t}s^{\alpha}(\|\dot{x}(s)\|^2+\|\dot{\lambda}(s)\|^2) ds.
\end{eqnarray}	
Combining  \eqref{zd1}, \eqref{zd2}, \eqref{zd3}, \eqref{zd4}, and \eqref{saj5} together, we obtain 
\begin{eqnarray*}
	&&\theta t^{\alpha+1}\beta(t)\|A^T(\lambda(t)-\lambda^*)\|^2\\
	&&\quad\leq J_1(t)+\theta J_2(t)+\alpha J_3(t)+\theta J_4(t)+2J_5(t)-t^{\alpha}\dot{h}(t)\\
	&&\qquad+\int_{t_{0}}^{t}s^{\alpha}\left((\alpha+1)\theta W(s)+\left(\frac{\alpha(\alpha-1)}{t_{0}^2\beta(t_{0})}+\max\{1, \|A\|^2\}\right)\|(\dot{x}(s), \dot{\lambda}(s))\|^2\right)ds\\
	&&\qquad+\left(1-2\theta+(\alpha-b)\theta\right)\int_{t_{0}}^{t}s^{\alpha}\beta(s)\|A^T(\lambda(s)-\lambda^*)\|^2ds-2t^{\alpha}\langle A^T(\lambda(t)-\lambda^*), \dot{x}(t)\rangle+C_6,
\end{eqnarray*}	
where
$C_6$ is a nonnegative constant.  This together with \eqref{zd5} yields 
\begin{eqnarray}\label{zd7}
	&&\theta t^{\alpha+1}\beta(t)\|A^T(\lambda(t)-\lambda^*)\|^2\nonumber\\
	&\le&-t^{\alpha}\dot{h}(t)+(\alpha-b-1)\theta\int_{t_{0}}^{t}s^{\alpha}\beta(s)\|A^T(\lambda(s)-\lambda^*)\|^2ds\nonumber\\	
&&\qquad-2t^{\alpha}\langle A^T(\lambda(t)-\lambda^*), \dot{x}(t)\rangle+\int_{t_{0}}^{t}s^{\alpha}G(s)ds+C_6, \quad \forall t \geq t_0,
\end{eqnarray}	
where 
\begin{eqnarray}\label{zd11}
G(s)&=&(\alpha+1)\theta W(s)+\left(\frac{\alpha(\alpha-1)}{t_0^2\beta(t_0)}+\max\{1, \|A\|^2\}\right)\|(\dot{x}(s), \dot{\lambda}(s))\|^2\nonumber\\
&&+C_4\beta(s)\|\nabla f(x(s))-\nabla f(x^*)\|^2+C_5\beta(s)\|Ax(s)-b\|^2+\frac{\|x^*\|^2}{2}\beta(s)\epsilon(s)\nonumber\\
&&+4\beta(s)(\epsilon(s))^2\|x(s)\|^2, \quad \forall s\geq t_0.
\end{eqnarray}
Dividing both sides of \eqref{zd7} by $t^{\alpha}$, we have 
\begin{eqnarray*}
\theta t\beta(t)\|A^T(\lambda(t)-\lambda^*)\|^2
	&\leq&-\dot{h}(t)+\frac{(\alpha-b-1)\theta}{t^{\alpha}}\int_{t_{0}}^{t}s^{\alpha}\beta(s)\|A^T(\lambda(s)-\lambda^*)\|^2ds\nonumber\\
	&&-2\langle A^T(\lambda(t)-\lambda^*), \dot{x}(t)\rangle+\frac{1}{t^{\alpha}}\int_{t_{0}}^{t}s^{\alpha}G(s)ds+\frac{C_6}{t^{\alpha}}, \quad \forall t \geq t_0.
\end{eqnarray*}	
Take $\tau \geq t_0$. Integrating the above inequality from $t_0$ to $\tau$ gives 
\begin{eqnarray}\label{intf}
	&&\theta \int_{t_{0}}^{\tau}t\beta(t)\|A^T(\lambda(t)-\lambda^*)\|^2dt\nonumber\\
	&&\quad\leq h(t_0)-h(\tau)
	+(\alpha-b-1)\theta\int_{t_{0}}^{\tau}\frac{1}{t^{\alpha}}\left(\int_{t_{0}}^{t}s^{\alpha}\beta(s)\|A^T(\lambda(s)-\lambda^*)\|^2ds\right)dt\nonumber\\
	&&\qquad-2\int_{t_{0}}^{\tau}\langle A^T(\lambda(t)-\lambda^*), \dot{x}(t)\rangle dt
	+\int_{t_{0}}^{\tau}\frac{1}{t^{\alpha}}\left(\int_{t_{0}}^{t}s^{\alpha}G(s)ds\right)dt+C_6\int_{t_{0}}^{\tau}\frac{1}{t^{\alpha}}dt.
\end{eqnarray}
Now we estimate the four integration terms in the right side of \eqref{intf}.
\begin{equation}\label{intf1}
\int_{t_{0}}^{\tau}\frac{1}{t^{\alpha}}dt=\frac{\tau^{1-\alpha}}{1-\alpha}+\frac{t_{0}^{1-\alpha}}{\alpha-1}\leq\frac{t_{0}^{1-\alpha}}{\alpha-1}.
\end{equation}

By Lemma \ref{lemma2.1.3} with $K(t)=tG(t)$ and  $K(t)=t\beta(t)\|A^T(\lambda(t)-\lambda^*)\|^2$, we get 
\begin{equation}\label{intf2}
\int_{t_{0}}^{\tau}\frac{1}{t^{\alpha}}\left(\int_{t_{0}}^{t}s^{\alpha}G(s)ds\right)dt\leq\frac{1}{\alpha-1}\int_{t_{0}}^{\tau}tG(t)dt
\end{equation}
and
\begin{equation}\label{intf3}
\int_{t_{0}}^{\tau}\frac{1}{t^{\alpha}}\left(\int_{t_{0}}^{t}s^{\alpha}\beta(s)\|A^T(\lambda(s)-\lambda^*)\|^2ds\right)dt\leq\frac{1}{\alpha-1}\int_{t_{0}}^{\tau}t\beta(t)\|A^T(\lambda(t)-\lambda^*)\|^2dt.
\end{equation}
Further, we have
\begin{eqnarray}\label{intf4}
-\int_{t_{0}}^{\tau}\langle A^T(\lambda(t)-\lambda^*), \dot{x}(t)\rangle dt
&=&\langle \lambda(t_0)-\lambda^*, Ax(t_0)-b\rangle-\langle \lambda(\tau)-\lambda^*, Ax(\tau)-b\rangle\nonumber\\
&&+\int_{t_{0}}^{\tau}\langle Ax(t)-b, \dot{\lambda}(t)\rangle dt\nonumber\\
&\leq&\|\lambda(t_0)-\lambda^*\|\|Ax(t_0)-b\|+\|\lambda(\tau)-\lambda^*\|\|Ax(\tau)-b\|\nonumber\\
&&+\frac{1}{2}\int_{t_{0}}^{\tau}t\beta(t)\|Ax(t)-b\|^2dt+\frac{1}{2}\int_{t_{0}}^{\tau}\frac{\|\dot{\lambda}(t)\|^2}{t\beta(t)}dt\nonumber\\
&\leq&\frac{1}{2}\int_{t_{0}}^{\tau}\frac{t\|\dot{\lambda}(t)\|^2}{t^2\beta(t)}dt+C_7,
\end{eqnarray}	
where $C_7>0$ is a constant and the last inequality uses the  facts that $\|Ax(t)-b\|=\mathcal{O}\left(\frac{1}{t^2\beta(t)}\right)$ and  $(\lambda(t))_{t\geq t_{0}}$ is bounded, and
$\int_{t_{0}}^{+\infty}t\beta(t)\|Ax(t)-b\|^2dt<+\infty$ which are guaranteed by Theorem \ref{ztt3.1}.
Remember that the constant $b>0$ appearing in \eqref{zs19} can be taken small enough such $\alpha-b-1>0$. It follows from \eqref{intf},\eqref{intf1},\eqref{intf2},\eqref{intf3}, and \eqref{intf4} that 
\begin{eqnarray*}
	&&\theta \int_{t_{0}}^{\tau}t\beta(t)\|A^T(\lambda(t)-\lambda^*)\|^2dt\\
	&&\quad\leq h(t_0)-h(\tau)
	+\frac{(\alpha-b-1)\theta}{\alpha-1}\int_{t_{0}}^{\tau}t\beta(t)\|A^T(\lambda(t)-\lambda^*)\|^2dt\\
	&&\qquad+\int_{t_{0}}^{\tau}\frac{t\|\dot{\lambda}(t)\|^2}{t^2\beta(t)}dt
	+\frac{1}{\alpha-1}\int_{t_{0}}^{\tau}tG(t)dt+\frac{C_6t_{0}^{1-\alpha}}{\alpha-1}+2C_7\\
&&\quad\leq \frac{(\alpha-b-1)\theta}{\alpha-1}\int_{t_{0}}^{\tau}t\beta(t)\|A^T(\lambda(t)-\lambda^*)\|^2dt\\
	&&\qquad	+\frac{1}{\alpha-1}\int_{t_{0}}^{\tau}tG(t)dt+C_8,
\end{eqnarray*}
where $C_8$ is a positive constant and the last inequality uses  the fact $h(t)\ge 0$ and 
$$\int_{t_{0}}^{\tau}\frac{t\|\dot{\lambda}(t)\|^2}{t^2\beta(t)}dt\le \frac{1}{t_{0}^2\beta(t_{0})}\int_{t_{0}}^{\tau}t\|\dot{\lambda}(t)\|^2dt<+\infty,$$
which is guaranteed by  Theorem \ref{ztt3.1}. As a result,
\begin{eqnarray*}
	\frac{b\theta}{\alpha-1} \int_{t_{0}}^{+\infty}t\beta(t)\|A^T(\lambda(t)-\lambda^*)\|^2dt\leq\frac{1}{\alpha-1}\int_{t_{0}}^{+\infty}tG(t)dt+C_8.
\end{eqnarray*}	
To complete the proof, we need to prove 
$$\int_{t_{0}}^{+\infty}tG(t)dt<+\infty$$
which follows from  Theorem \ref{ztt3.1}, \eqref{zd11}, \eqref{twint}, \eqref{FastSR}, and $\lim_{t\to+\infty}\epsilon(t)=0$. 
\end{proof}

Next, we derive some  convergence rate results for KKT system associated with Problem \eqref{z1}, which will be used to prove that the weak convergence of the trajectory of System \eqref{z2}.
\begin{theorem}\label{theorem4.1.5}
	Suppose that Assumption \ref{as1} and conditions \eqref{SS-C} and \eqref{FastSR}  hold.
	Let $(x,\lambda): [t_{0},+\infty)\rightarrow \mathcal{X}\times\mathcal{Y}$ be a solution of System \eqref{z2} and  $(x^*,\lambda^*)\in\Omega$. Then, it holds:
	%$$\|A^T(\lambda(t)-\lambda^*)\|=o\left(\frac{1}{\sqrt{t}(\beta(t))^{\frac{1}{4}}}\right), \quad \|\nabla f(x(t))-\nabla f(x^*)\|=o\left(\frac{1}{\sqrt{t}(\beta(t))^{\frac{1}{4}}}\right).$$
	$$\|\nabla_{x}\mathcal{L}(x(t),\lambda(t))\|=\|\nabla f(x(t))+A^T(\lambda(t)\|=o\left(\frac{1}{\sqrt{t}(\beta(t))^{\frac{1}{4}}}\right)$$ 
and
$$ \|\nabla_{\lambda}\mathcal{L}(x(t),\lambda(t))\|=\|Ax(t)-b\|=\mathcal{O}\left(\frac{1}{t^2\beta(t)}\right).$$
\end{theorem}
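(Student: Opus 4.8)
The plan is to prove the two asymptotic estimates separately. The feasibility estimate $\|\nabla_{\lambda}\mathcal{L}(x(t),\lambda(t))\|=\|Ax(t)-b\|=\mathcal{O}\left(\frac{1}{t^2\beta(t)}\right)$ has already been obtained in Theorem \ref{ztt3.1}(ii), so it requires no further work. For the primal gradient of the Lagrangian I would start from the optimality condition \eqref{zcs1}, which gives $\nabla f(x^*)+A^T\lambda^*=0$, and use the decomposition
\[
\nabla f(x(t))+A^T\lambda(t)=\big(\nabla f(x(t))-\nabla f(x^*)\big)+A^T(\lambda(t)-\lambda^*),
\]
then bound the two summands and combine by the triangle inequality.

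For the first summand, Theorem \ref{ztt3.1}(ii) already provides $\|\nabla f(x(t))-\nabla f(x^*)\|=\mathcal{O}\left(\frac{1}{t\sqrt{\beta(t)}}\right)$. Since $\beta$ is bounded below by $\beta(t_0)>0$, the quantity $\sqrt{t}\,(\beta(t))^{1/4}\to+\infty$, and writing $\frac{1}{t\sqrt{\beta(t)}}=\frac{1}{\sqrt{t}\,(\beta(t))^{1/4}}\cdot\frac{1}{\sqrt{t}\,(\beta(t))^{1/4}}$ shows this term is in fact $o\left(\frac{1}{\sqrt{t}\,(\beta(t))^{1/4}}\right)$, already matching the target rate.

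The substance lies in the second summand, where I claim $\|A^T(\lambda(t)-\lambda^*)\|=o\left(\frac{1}{\sqrt{t}\,(\beta(t))^{1/4}}\right)$, i.e. $t\sqrt{\beta(t)}\,\|A^T(\lambda(t)-\lambda^*)\|^2\to 0$. The input is Lemma \ref{lemma4.1.4}, which yields $\int_{t_0}^{+\infty}t\beta(t)\|A^T(\lambda(t)-\lambda^*)\|^2dt<+\infty$. Integrability alone does not force pointwise decay, so I would supply the missing regularity from the dynamics: setting $v(t):=\|A^T(\lambda(t)-\lambda^*)\|^2$, one has $\dot v(t)=2\langle AA^T(\lambda(t)-\lambda^*),\dot\lambda(t)\rangle$, and since the trajectory is bounded (Theorem \ref{ztt3.1}) and $\|\dot\lambda(t)\|=\mathcal{O}(\tfrac1t)$, this gives $|\dot v(t)|\le C/t$ for some $C>0$. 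I would then argue by contradiction: if $t_n\sqrt{\beta(t_n)}\,v(t_n)\ge c>0$ along a sequence $t_n\to+\infty$, the bound $|\dot v|\le C/t$ keeps $v(t)\ge\tfrac12 v(t_n)$ on an interval $[t_n,(1+\delta_n)t_n]$ with $\delta_n\gtrsim v(t_n)$; using that $t\mapsto t\beta(t)$ is nondecreasing (here $\beta$ is nondecreasing by Assumption \ref{as1}), the contribution of each such interval to $\int t\beta(t)v(t)\,dt$ is bounded below by a fixed multiple of $\big(t_n\sqrt{\beta(t_n)}\,v(t_n)\big)^2\ge c^2$. Passing to a subsequence making the intervals disjoint then forces $\int_{t_0}^{+\infty}t\beta(t)v(t)\,dt=+\infty$, contradicting Lemma \ref{lemma4.1.4}; hence $t\sqrt{\beta(t)}\,v(t)\to 0$, which is exactly the desired $o$-rate for $\|A^T(\lambda(t)-\lambda^*)\|$.

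Combining the two summands via the triangle inequality yields $\|\nabla f(x(t))+A^T\lambda(t)\|=o\left(\frac{1}{\sqrt{t}\,(\beta(t))^{1/4}}\right)$, and the feasibility estimate is quoted directly from Theorem \ref{ztt3.1}(ii). The main obstacle is precisely the passage from the $L^1$-type bound of Lemma \ref{lemma4.1.4} to a pointwise rate: one must inject enough regularity of $\|A^T(\lambda-\lambda^*)\|^2$ (through $\|\dot\lambda\|=\mathcal{O}(\tfrac1t)$ and the boundedness of $\lambda$) and exploit the monotonicity of $t\beta(t)$. It is exactly this interplay that produces the squared quantity $\big(t\sqrt{\beta}\,v\big)^2$ in the interval estimate and therefore pins down the specific rate $o\left(\frac{1}{\sqrt{t}\,(\beta(t))^{1/4}}\right)$ rather than a faster one.
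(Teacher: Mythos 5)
Your proposal is correct, and it reaches the conclusion by a genuinely different route at the key step. Both you and the paper reduce everything to the decomposition $\nabla f(x(t))+A^T\lambda(t)=(\nabla f(x(t))-\nabla f(x^*))+A^T(\lambda(t)-\lambda^*)$ via \eqref{zcs1}, quote Theorem \ref{ztt3.1}(ii) for the feasibility estimate, and use Lemma \ref{lemma4.1.4} as the essential input for the dual term. The difference is in how the $L^1$ bound $\int_{t_0}^{+\infty}t\beta(t)\|A^T(\lambda(t)-\lambda^*)\|^2dt<+\infty$ is converted into pointwise decay. The paper forms $F(t)=t(\beta(t))^{1/2}\|A^T(\lambda(t)-\lambda^*)\|^2$ (and the analogous quantity for $\nabla f$), derives a differential inequality $\dot F(t)\leq V(t)$ with $V$ integrable --- using the scaling condition \eqref{SS-C} through \eqref{zs19} to control the $\dot\beta$ term and Theorem \ref{ztt3.1}(i) for $\int t\|\dot\lambda(t)\|^2dt<+\infty$ --- and then invokes the quoted Lemma \ref{lemma2.1.5} (Abbas--Attouch--Svaiter) to conclude $F(t)\to 0$. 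You instead run a self-contained Barbalat-type contradiction argument: from boundedness of the trajectory and $\|\dot\lambda(t)\|=\mathcal{O}(1/t)$ you get $|\dot v(t)|\leq C/t$ for $v=\|A^T(\lambda-\lambda^*)\|^2$, and the interval estimate combined with monotonicity of $t\beta(t)$ shows that any sequence with $t_n\sqrt{\beta(t_n)}\,v(t_n)\geq c>0$ would make the integral diverge. Your argument checks out (the lower bound $\tfrac12 v(t_n)\cdot t_n\beta(t_n)\cdot\delta_n t_n\gtrsim\big(t_n\sqrt{\beta(t_n)}v(t_n)\big)^2$ is exactly right, and disjointness of the intervals can be arranged by passing to a subsequence), and it buys elementarity: you never need Lemma \ref{lemma2.1.5}, nor the differential inequality bookkeeping with $\dot\beta$. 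A second simplification on your side is the $\nabla f$ term: you observe that the pointwise rate $\mathcal{O}\big(\tfrac{1}{t\sqrt{\beta(t)}}\big)$ from Theorem \ref{ztt3.1}(ii) already implies the $o\big(\tfrac{1}{\sqrt{t}(\beta(t))^{1/4}}\big)$ rate, since $\sqrt{t}(\beta(t))^{1/4}\to+\infty$ when $\beta$ is nondecreasing; the paper instead reruns the full Lyapunov machinery for this term, which is unnecessary. What the paper's approach buys in exchange is uniformity: one identical computation handles both terms, it relies only on integral (not pointwise) information about $\dot\lambda$, and it is the standard technique in this literature, so it transfers unchanged to variants of the system.
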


\begin{proof}
Since  $\beta(t)>0$ for all $t\ge t_0$ is nondedreasing, we have $(\beta(t))^\frac{1}{2}\le t\beta(t)$ for all $t$ large enough.
It follows from \eqref{SS-C} and \eqref{zs19} that  
\begin{eqnarray*}
(\beta(t))^{\frac{1}{2}}+\frac{t}{2}(\beta(t))^{-\frac{1}{2}}\dot{\beta}(t)\leq\frac{1-b\theta}{2\theta}(\beta(t))^{\frac{1}{2}}\le \frac{1-b\theta}{2\theta}t\beta(t)
\end{eqnarray*}
for all $t$ large enough,  where $b>0$ is a small enough constant.

It follows that for all $t$ large enough, 
\begin{eqnarray}\label{sun1}
&&\frac{d}{dt}\left(t(\beta(t))^{\frac{1}{2}}\|A^T(\lambda(t)-\lambda^*)\|^2\right)\nonumber\\
&&\quad=\left((\beta(t))^{\frac{1}{2}}+\frac{t}{2}(\beta(t))^{-\frac{1}{2}}\dot{\beta}(t)\right)\|A^T(\lambda(t)-\lambda^*)\|^2+2t(\beta(t))^{\frac{1}{2}}\langle AA^T(\lambda(t)-\lambda^*), \dot{\lambda}(t)\rangle\nonumber\\
&&\quad\leq\frac{1-b\theta}{2\theta}t\beta(t)\|A^T(\lambda(t)-\lambda^*)\|^2+t\beta(t)\|A\|^2\cdot\|A^T(\lambda(t)-\lambda^*)\|^2+t\|\dot{\lambda}(t)\|^2
\end{eqnarray}
and 
\begin{eqnarray}\label{sun2}
	&&\frac{d}{dt}\left(t(\beta(t))^{\frac{1}{2}}\|\nabla f(x(t))-\nabla f(x^*)\|^2\right)\nonumber\\
	&&\quad=\left((\beta(t))^{\frac{1}{2}}+\frac{t}{2}(\beta(t))^{-\frac{1}{2}}\dot{\beta}(t)\right)\|\nabla f(x(t))-\nabla f(x^*)\|^2\nonumber\\
	&&\qquad+2t(\beta(t))^{\frac{1}{2}}\langle\nabla f(x(t))-\nabla f(x^*), \frac{d}{dt}\nabla f(x(t))\rangle\nonumber\\
	&&\quad \leq\frac{1-b\theta}{2\theta}t\beta(t)\|\nabla f(x(t))-\nabla f(x^*)\|^2+t\beta(t)\|\nabla f(x(t))-\nabla f(x^*)\|^2\nonumber\\
	&&\qquad+t\|\frac{d}{dt}\nabla f(x(t))\|^2\nonumber\\
	&&\quad \leq\frac{1+\theta(2-b)}{2\theta}t\beta(t)\|\nabla f(x(t))-\nabla f(x^*)\|^2
	+L^2t\|\dot{x}(t)\|^2,
\end{eqnarray}
where the last inequality uses $\|\frac{d}{dt}\nabla f(x(t))\|\le L\|\dot{x}(t)\|$ which is guaranteed by $L$-Lipschitz continuity of  $\nabla f$.
By Theorem \ref{ztt3.1} and Lemma \ref{lemma4.1.4}, we obtain,
\begin{eqnarray}\label{ad14}
	\int_{t_0}^{+\infty}\left(\frac{1-b\theta}{2\theta}+\|A\|^2\right)t\beta(t)\|A^T(\lambda(t)-\lambda^*)\|^2+t\|\dot{\lambda}(t)\|^2dt<+\infty
\end{eqnarray}
and 
\begin{eqnarray}\label{ad15}
	\int_{t_0}^{+\infty}\frac{1+\theta(2-b)}{2\theta}t\beta(t)\|\nabla f(x(t))-\nabla f(x^*)\|^2dt<+\infty.
\end{eqnarray}
By Lemma \ref{lemma2.1.5}, \eqref{sun1}, \eqref{sun2}, \eqref{ad14}, and \eqref{ad15},  we have
$$\lim_{t\rightarrow+\infty}t(\beta(t))^{\frac{1}{2}}\|A^T(\lambda(t)-\lambda^*)\|^2=0$$
and 
$$\lim_{t\rightarrow+\infty}t(\beta(t))^{\frac{1}{2}}\|\nabla f(x(t))-\nabla f(x^*)\|^2=0,$$
which means 
$$\|A^T(\lambda(t)-\lambda^*)\|=o\left(\frac{1}{\sqrt{t}(\beta(t))^{\frac{1}{4}}}\right)$$
 and
$$\|\nabla f(x(t))-\nabla f(x^*)\|=o\left(\frac{1}{\sqrt{t}(\beta(t))^{\frac{1}{4}}}\right).$$
Consequently,
$$\|\nabla_{x}\mathcal{L}(x(t),\lambda(t))\|=o\left(\frac{1}{\sqrt{t}(\beta(t))^{\frac{1}{4}}}\right)$$
since
\begin{eqnarray*}
	\|\nabla_{x}\mathcal{L}(x(t),\lambda(t))\|&=&\|\nabla f(x(t))+A^T\lambda(t)\|\nonumber\\
		&\leq&\|\nabla f(x(t))-\nabla f(x^*)\|+\|A^T(\lambda(t)-\lambda^*)\|.
\end{eqnarray*}
The result 
$$ \|\nabla_{\lambda}\mathcal{L}(x(t),\lambda(t))\|=\|Ax(t)-b\|=\mathcal{O}\left(\frac{1}{t^2\beta(t)}\right)$$
follows directly from  (ii) of  Theorem \ref{ztt3.1}.
\end{proof}

Now we present  the main result of this section.

\begin{theorem}\label{theorem4.1.6}
	Suppose that Assumption \ref{as1} and conditions \ref{SS-C} and \eqref{FastSR} hold.	Let $(x,\lambda): [t_{0},+\infty)\rightarrow \mathcal{X}\times\mathcal{Y}$ be a solution of System \eqref{z2} and  $(x^*,\lambda^*)\in\Omega$. Then, the trajectory $(x(t), \lambda(t))$ of System \eqref{z2} converges weakly to a primal\mbox{-}dual optimal solution of  Problem \eqref{z1} as $t\to +\infty$.
\end{theorem}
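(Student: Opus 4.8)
The plan is to deduce weak convergence from the continuous-time Opial lemma applied with the solution set $S=\Omega$. Recall that this lemma requires two ingredients: (a) for every $(x^*,\lambda^*)\in\Omega$ the limit $\lim_{t\to+\infty}\|(x(t),\lambda(t))-(x^*,\lambda^*)\|$ exists; and (b) every weak sequential cluster point of the trajectory $(x(t),\lambda(t))$ as $t\to+\infty$ belongs to $\Omega$. Once both are verified, Opial's lemma yields weak convergence of $(x(t),\lambda(t))$ to some point of $\Omega$, which by the characterization \eqref{zcs1} is exactly a primal-dual optimal solution of Problem \eqref{z1}.

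Ingredient (a) will be immediate from Lemma \ref{lemma4.1.2}: for an arbitrary $(x^*,\lambda^*)\in\Omega$ the function $h(t)=\frac{1}{2}\|(x(t),\lambda(t))-(x^*,\lambda^*)\|^2$ has a finite limit as $t\to+\infty$, hence so does $\|(x(t),\lambda(t))-(x^*,\lambda^*)\|$. Moreover, Theorem \ref{ztt3.1} guarantees that the trajectory is bounded, so weak sequential cluster points exist and ingredient (b) is meaningful.

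For ingredient (b), I would first fix a sequence $t_n\to+\infty$ with $(x(t_n),\lambda(t_n))\rightharpoonup(\bar{x},\bar{\lambda})$ weakly. By Theorem \ref{theorem4.1.5} one has
$$\nabla f(x(t_n))+A^T\lambda(t_n)\to 0\quad\text{and}\quad Ax(t_n)-b\to 0$$
strongly as $n\to+\infty$. The feasibility part is easy: since $A$ is a bounded linear operator it is weak-to-weak continuous, so $Ax(t_n)\rightharpoonup A\bar{x}$; combined with $Ax(t_n)\to b$ strongly and uniqueness of weak limits, this forces $A\bar{x}=b$. The delicate part is passing to the limit in the stationarity relation, because $\nabla f$ need not be weak-to-weak continuous. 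To handle this I would introduce the saddle-point operator $T(x,\lambda)=(\nabla f(x)+A^T\lambda,\,-(Ax-b))$ associated with the convex-concave Lagrangian $\mathcal{L}$. This operator is maximally monotone, being the sum of the single-valued, full-domain, continuous monotone map $(x,\lambda)\mapsto(\nabla f(x),0)$ and the skew-symmetric bounded linear map $(x,\lambda)\mapsto(A^T\lambda,-Ax+b)$. Since the graph of a maximally monotone operator is sequentially closed for the weak-strong topology, and since $(x(t_n),\lambda(t_n))\rightharpoonup(\bar{x},\bar{\lambda})$ while $T(x(t_n),\lambda(t_n))\to(0,0)$ strongly, this demiclosedness will give $T(\bar{x},\bar{\lambda})=(0,0)$, i.e. $\nabla f(\bar{x})+A^T\bar{\lambda}=0$ and $A\bar{x}-b=0$. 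By \eqref{zcs1} we then conclude $(\bar{x},\bar{\lambda})\in\Omega$, establishing (b).

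The hard part will be exactly this final passage to the limit in the stationarity equation: the strong decay of the KKT residuals supplied by Theorem \ref{theorem4.1.5} is precisely what activates the weak-strong closedness argument, and checking maximal monotonicity of $T$ (equivalently, invoking demiclosedness of the subdifferential of the convex-concave saddle function $\mathcal{L}$) is the only nonroutine point, the feasibility limit and ingredient (a) being straightforward. With (a) and (b) in place, a direct application of Opial's lemma closes the proof.
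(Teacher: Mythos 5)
Your proposal is correct and follows essentially the same route as the paper: existence of the limit $\lim_{t\to+\infty}\|(x(t),\lambda(t))-(x^*,\lambda^*)\|$ via Lemma \ref{lemma4.1.2}, strong decay of the KKT residuals via Theorem \ref{theorem4.1.5}, identification of weak cluster points through the maximal monotonicity and weak--strong sequential graph closedness of the saddle-point operator $(x,\lambda)\mapsto(\nabla f(x)+A^T\lambda,\,-(Ax-b))$ (the paper's $\mathcal{T}_{\mathcal{L}}$), and conclusion by Opial's Lemma \ref{lemma2.1.6}. Your additional remarks---the direct weak-to-weak continuity argument for feasibility and the explicit justification of maximal monotonicity as a sum of a continuous monotone map and a skew bounded linear map---are correct refinements of steps the paper simply asserts with a citation.
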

 
\begin{proof}
By Lemma \ref{lemma4.1.2},  the limit $\lim_{t\rightarrow+\infty}\|(x(t), \lambda(t))-(x^*, \lambda^*)\|\in \mathbb{R}$ exists  for any $(x^*, \lambda^*)\in \Omega$. This proves condition (i) of Opial's Lemma \ref{lemma2.1.6}. 

Let $(\widetilde{x}, \widetilde{\lambda})\in \mathcal{X}\times\mathcal{Y}$ be an 
arbitrary weak sequential cluster point of $(x(t), \lambda(t))_{t\geq t_0}$. Then, there exists  $t_n\ge t_0$ with $\lim_{n\to+\infty}t_n=+\infty$ such that 
\begin{eqnarray*}
	(x(t_n), \lambda(t_n))\rightharpoonup(\widetilde{x}, \widetilde{\lambda})   \quad as \quad n\rightarrow +\infty.
\end{eqnarray*}
By  Theorem \ref{theorem4.1.5}, we obtain
$$\lim_{t\rightarrow+\infty}\|\nabla_{x}\mathcal{L}(x(t_n),\lambda(t_n))\|=0\quad\text{and} \quad \lim_{t\rightarrow+\infty}\|\nabla_{\lambda}\mathcal{L}(x(t_n),\lambda(t_n))\|=0.$$
Define $\mathcal{T}_{\mathcal{L}}: \mathcal{X}\times\mathcal{Y}\rightarrow \mathcal{X}\times\mathcal{Y}$ by 
\begin{eqnarray*}
	\mathcal{T}_{\mathcal{L}}(x, \lambda)=\begin{pmatrix}\nabla_{x}\mathcal{L}(x, \lambda) \\-\nabla_{\lambda}\mathcal{L}(x, \lambda)\end{pmatrix}.
\end{eqnarray*}
The operator $\mathcal{T}_{\mathcal{L}}$ is maximally monotone and so its graph is sequentially closed in $(\mathcal{X}\times\mathcal{Y})^{\text{weak}}\times(\mathcal{X}\times\mathcal{Y})^{\text{strong}}$ (see \cite[Proposition $20.38$]{BauschkeCombet2017}). Thus, $(0,0)=\mathcal{T}_{\mathcal{L}}(\widetilde{x}, \widetilde{\lambda})$, which means
\begin{eqnarray*}
	\begin{cases}
		\nabla f(\widetilde{x})+A^T\widetilde{\lambda}=0,\\
		A\widetilde{x}-b=0,
	\end{cases}
\end{eqnarray*}
which together with \eqref{zcs1} implies $(\widetilde{x}, \widetilde{\lambda})\in\Omega$. This proves condition (ii) of Opial's Lemma.

By the Opial's Lemma,  $(x(t), \lambda(t))$ converges weakly to  a primal\mbox{-}dual optimal solutions of Problem \eqref{z1} as $t\to +\infty$.
\end{proof}

\begin{remark}\label{rmkweak}
Bot and Nguyen \cite[Theorem 4.8]{BNguyen2022} showed that the  trajectory generated by $\text{(Z-AVD)}$ with $\alpha>3$ converges weakly to a primal-dual optimal solution of Problem \eqref{z1}.  When Assumption \ref{as1} and the strict scaling condition \eqref{SS-C} hold, Hulett and Nguyen \cite[Theorem 4.10]{HulettNeuyen2023} further establised the weak convergence of the trajectory   generated by  $\text{(HN-AVD)}$ to a primal-dual optimal solution. As shown in Theorem \ref{theorem4.1.5}, the property on the weak convergence of the trajectory to a primal-dual solution retains true  forS ystem \eqref{z2}  when the rescaled regularization parameter satisfies the condition \eqref{FastSR}.
\end{remark}

\section{Strong convergence of the primal  trajectory to the minimal norm solution}

In this section, following the approaches presented in Attouch et al. \cite{AttouchZH2018} and Zhu et al. \cite{zhuhufang1},  we will discuss the strong convergence of the primal trajectory $(x(t))_{t\geq t_{0}}$ of System \eqref{z2} to the minimal norm  solution of Problem \eqref{z1} under the condition
\begin{equation}\label{Slow}
\lim_{t\rightarrow+\infty}t^2\beta(t)\epsilon(t)=+\infty,
\end{equation}
which reflects the case  that the rescaled regularization parameter $\beta(t)\epsilon(t)$ vanishes slowly.

Let $\bar{x}^*\in\mathcal{X}$ be the minimal norm solution of Problem \eqref{z1}, i.e., $\bar{x}^*=\text{Proj}_{S}0$, where  $S$ is the solution set of Problem \eqref{z1} and $\text{Proj}_S$ is the projection operator associated with $S$.  Then, there exists a solution $\bar{\lambda}^*\in\mathcal{Y}$ of the dual problem \eqref{dudu5} such that $(\bar{x}^*,\bar{\lambda}^*)\in\Omega$. For every $\epsilon>0$, define $\mathcal{L}_{\epsilon} : \mathcal{X}\rightarrow\mathbb{R}$  by
\begin{eqnarray}\label{zf1}
	\mathcal{L}_{\epsilon}(x):=\mathcal{L}_{\rho}(x,\bar{\lambda}^*)+\frac{\epsilon}{2}\|x\|^2,\quad \forall x\in  \mathcal{X}.
\end{eqnarray}
Since $\mathcal{L}_{\epsilon}$ is strongly convex,  and it  has a unique minimizer $x_{\epsilon}$, i.e.,
$$x_{\epsilon}:=\arg\min_{x\in\mathcal{X}}\mathcal{L}_{\epsilon}(x).$$
It is well-known that
\begin{eqnarray}\label{zf3}
	\lim_{\epsilon\rightarrow0}\|x_{\epsilon}-\bar{x}^*\|=0,\quad \|x_{\epsilon}\|\leq\|\bar{x}^*\|, \quad \forall \epsilon> 0,
\end{eqnarray}
and
\begin{eqnarray}\label{zf2}
	\nabla\mathcal{L}_{\epsilon}(x_{\epsilon})=\nabla_{x}\mathcal{L}_{\rho}(x_{\epsilon},\bar{\lambda}^*)+\epsilon x_{\epsilon}=0.
\end{eqnarray}
From \eqref{zcs1}, we have $\bar{x}^*\in \mathcal{D}(\bar{\lambda}^*)$, where $\mathcal{D}(\bar{\lambda}^*)=\arg\min_{x\in\mathcal{X}}\mathcal{L}_{\rho}(x, \bar{\lambda}^*)$. From
 \eqref{zttonefang11}, we obtain
\begin{eqnarray*}
	\bar{x}^*=\text{Proj}_{\mathcal{D}(\bar{\lambda}^*)}0.
\end{eqnarray*}

To prove the strong convergence of the primal trajectory, we need the following lemma which  was essentially established  in \cite[Lemma 5.1]{zhuhufang1}.
%If the reader wants more details of the results associated with Tikhonov regularization, we refer to (\cite{Attouch(1996)},\cite{AttouchCominetti1996}).

\begin{lemma}\label{lemma 5.1}
Suppose that $\bar{x}^*=\text{Proj}_{S}0$ and $\epsilon: [t_0, +\infty)\rightarrow [0,+\infty)$ is a  nonincreasing  and continuously differentiable function satisfying $\lim_{t\rightarrow+\infty}\epsilon(t)=0$. Let $(x,\lambda): [t_{0},+\infty)\rightarrow \mathcal{X}\times\mathcal{Y}$ be a solution of System \eqref{z2}.
Then, it holds
\begin{eqnarray*}
	\frac{\epsilon(t)}{2}(\|x(t)-x_{\epsilon(t)}\|^2+\|x_{\epsilon(t)}\|^2-\|\bar{x}^*\|^2)\leq\mathcal{L}_{\epsilon(t)}(x(t))-\mathcal{L}_{\epsilon(t)}(\bar{x}^*).
\end{eqnarray*}
\end{lemma}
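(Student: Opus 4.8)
The plan is to treat the claimed inequality as a pointwise convexity estimate evaluated at $x=x(t)$ and $\epsilon=\epsilon(t)$; the trajectory itself plays no role beyond supplying these arguments, so the statement holds for any $x\in\mathcal{X}$. The two ingredients I would assemble are that $\mathcal{L}_{\epsilon(t)}$ is $\epsilon(t)$-strongly convex with minimizer $x_{\epsilon(t)}$, and that $\bar{x}^*$ is a global minimizer of the convex function $\mathcal{L}_{\rho}(\cdot,\bar{\lambda}^*)$.

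First I would record the minimization property of $\bar{x}^*$. Since $(\bar{x}^*,\bar{\lambda}^*)\in\Omega$, the characterization \eqref{zcs1} gives $\nabla f(\bar{x}^*)+A^T\bar{\lambda}^*=0$ and $A\bar{x}^*-b=0$, hence $\nabla_{x}\mathcal{L}_{\rho}(\bar{x}^*,\bar{\lambda}^*)=\nabla f(\bar{x}^*)+A^T\bar{\lambda}^*+\rho A^T(A\bar{x}^*-b)=0$. Because $\mathcal{L}_{\rho}(\cdot,\bar{\lambda}^*)$ is convex, this stationarity forces $\bar{x}^*$ to be a global minimizer, so
\begin{equation*}
\mathcal{L}_{\rho}(x_{\epsilon(t)},\bar{\lambda}^*)-\mathcal{L}_{\rho}(\bar{x}^*,\bar{\lambda}^*)\ge 0.
\end{equation*}

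Next I would invoke strong convexity. The function $\mathcal{L}_{\epsilon(t)}=\mathcal{L}_{\rho}(\cdot,\bar{\lambda}^*)+\frac{\epsilon(t)}{2}\|\cdot\|^2$ in \eqref{zf1} is $\epsilon(t)$-strongly convex, and by \eqref{zf2} its unique minimizer $x_{\epsilon(t)}$ satisfies $\nabla\mathcal{L}_{\epsilon(t)}(x_{\epsilon(t)})=0$. Applying the strong-convexity inequality at the point $x(t)$ with base point $x_{\epsilon(t)}$, and using that the resulting gradient term vanishes, yields
\begin{equation*}
\mathcal{L}_{\epsilon(t)}(x(t))\ge \mathcal{L}_{\epsilon(t)}(x_{\epsilon(t)})+\frac{\epsilon(t)}{2}\|x(t)-x_{\epsilon(t)}\|^2.
\end{equation*}

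Finally I would expand the gap $\mathcal{L}_{\epsilon(t)}(x_{\epsilon(t)})-\mathcal{L}_{\epsilon(t)}(\bar{x}^*)$ via \eqref{zf1} into the Lagrangian difference $\mathcal{L}_{\rho}(x_{\epsilon(t)},\bar{\lambda}^*)-\mathcal{L}_{\rho}(\bar{x}^*,\bar{\lambda}^*)$ plus $\frac{\epsilon(t)}{2}(\|x_{\epsilon(t)}\|^2-\|\bar{x}^*\|^2)$, discard the nonnegative Lagrangian difference supplied by the first step, and substitute into the previous display; collecting the $\frac{\epsilon(t)}{2}$-terms reproduces exactly $\frac{\epsilon(t)}{2}(\|x(t)-x_{\epsilon(t)}\|^2+\|x_{\epsilon(t)}\|^2-\|\bar{x}^*\|^2)$. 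I do not expect a serious obstacle: the only point requiring care is the identification of $\bar{x}^*$ as a minimizer of $\mathcal{L}_{\rho}(\cdot,\bar{\lambda}^*)$ through \eqref{zcs1}, after which the conclusion is a short combination of the strong-convexity lower bound with the nonnegativity of that minimizing gap.
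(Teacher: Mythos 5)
Your proof is correct. The paper itself gives no argument for this lemma (it is stated as "essentially established in" Lemma 5.1 of the cited preprint \cite{zhuhufang1}), and your proposal supplies exactly the standard argument behind that citation: the $\epsilon(t)$-strong convexity of $\mathcal{L}_{\epsilon(t)}$ at its minimizer $x_{\epsilon(t)}$ (where the gradient term drops by \eqref{zf2}), combined with the nonnegativity of $\mathcal{L}_{\rho}(x_{\epsilon(t)},\bar{\lambda}^*)-\mathcal{L}_{\rho}(\bar{x}^*,\bar{\lambda}^*)$, which follows since \eqref{zcs1} makes $\bar{x}^*$ a stationary, hence global, minimizer of the convex function $\mathcal{L}_{\rho}(\cdot,\bar{\lambda}^*)$. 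Your observation that the inequality is a pointwise statement valid for every $x\in\mathcal{X}$, with the trajectory merely supplying the evaluation point, is also accurate.
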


\begin{theorem}\label{theoremztt5.1}

Suppose that Assumption \ref{AS-F} and  conditions \eqref{S-C}, \eqref{intbe} and \eqref{Slow} hold.
Let $(x,\lambda): [t_{0},+\infty)\rightarrow \mathcal{X}\times\mathcal{Y}$ be a solution of System \eqref{z2} and $\bar{x}^*=\text{Proj}_{S}0$. Then,
$$\liminf_{t\rightarrow+\infty}\|x(t)-\bar{x}^*\|=0.$$
Further, if there exists a constant $T>0$ such that the primal trajectory $\{x(t): t\geq T\}$ stays in either the open ball $B(0,\|\bar{x}^*\|)$ or its complement, then 
 $$\lim_{t\rightarrow+\infty}\|x(t)-\bar{x}^*\|=0.$$
\end{theorem}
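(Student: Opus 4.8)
The plan is to transfer the whole analysis onto the Tikhonov path $x_{\epsilon(t)}$ and to exploit the energy $\mathcal{E}$ of Lemma~\ref{lemma21} anchored at the minimal-norm pair $(\bar{x}^*,\bar{\lambda}^*)\in\Omega$. By the reverse triangle inequality and \eqref{zf3}, $\big|\,\|x(t)-\bar{x}^*\|-\|x(t)-x_{\epsilon(t)}\|\,\big|\le\|x_{\epsilon(t)}-\bar{x}^*\|\to0$, so $\liminf_t\|x(t)-\bar{x}^*\|=\liminf_t\|x(t)-x_{\epsilon(t)}\|$ and likewise for the full limit; it therefore suffices to control $\|x(t)-x_{\epsilon(t)}\|$. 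The key tool is the master bound
\begin{equation*}
\|x(t)-x_{\epsilon(t)}\|^2\le\frac{2\mathcal{E}(t)}{\theta^2t^2\beta(t)\epsilon(t)}.
\end{equation*}
I would derive it by reading off from the definition \eqref{zs3} that $\mathcal{L}_{\rho}(x(t),\bar{\lambda}^*)-\mathcal{L}_{\rho}(\bar{x}^*,\bar{\lambda}^*)+\tfrac{\epsilon(t)}{2}\|x(t)\|^2\le\mathcal{E}(t)/(\theta^2t^2\beta(t))$, hence $\mathcal{L}_{\epsilon(t)}(x(t))-\mathcal{L}_{\epsilon(t)}(\bar{x}^*)\le\mathcal{E}(t)/(\theta^2t^2\beta(t))-\tfrac{\epsilon(t)}{2}\|\bar{x}^*\|^2$, and then feeding this into Lemma~\ref{lemma 5.1} and cancelling the $\tfrac{\epsilon(t)}{2}\|\bar{x}^*\|^2$ terms (discarding the harmless $-\tfrac{\epsilon(t)}{2}\|x_{\epsilon(t)}\|^2\le0$). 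Since \eqref{Slow} gives $t^2\beta(t)\epsilon(t)\to+\infty$, the whole statement reduces to proving $\liminf_t\mathcal{E}(t)/(t^2\beta(t)\epsilon(t))=0$ and, under the extra hypothesis, the corresponding limit.

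Next I would keep the term $-\tfrac{\theta t\beta\epsilon}{2}\|x-\bar{x}^*\|^2$ that was dropped in the proof of Theorem~\ref{ztt3.2}: combining Lemma~\ref{lemma21} with \eqref{zs7} yields the sharp estimate
\begin{equation*}
\dot{\mathcal{E}}(t)\le-\frac{\theta t\beta(t)\epsilon(t)}{2}\big(\|x(t)-\bar{x}^*\|^2-\|\bar{x}^*\|^2\big),\qquad\forall t\ge t_0.
\end{equation*}
Integrating and using $\mathcal{E}\ge0$ gives $\int_{t_0}^{+\infty}t\beta(t)\epsilon(t)\big(\|x(t)-\bar{x}^*\|^2-\|\bar{x}^*\|^2\big)\,dt<+\infty$. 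As \eqref{Slow} forces $\int_{t_0}^{+\infty}t\beta(t)\epsilon(t)\,dt=+\infty$ (because $t^2\beta(t)\epsilon(t)\ge M$ for large $t$ gives $t\beta(t)\epsilon(t)\ge M/t$), I conclude $\liminf_t\big(\|x(t)-\bar{x}^*\|^2-\|\bar{x}^*\|^2\big)\le0$, i.e. along some $t_n\to+\infty$ the trajectory returns to the ball of radius $\|\bar{x}^*\|$ about $\bar{x}^*$ and $(x(t_n))$ is bounded.

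The hard part is exactly the Tikhonov source term $\tfrac{\theta t\beta\epsilon}{2}\|\bar{x}^*\|^2$: when $\bar{x}^*\neq0$ a pure magnitude argument only confines the trajectory to within distance $\|\bar{x}^*\|$ of $\bar{x}^*$ and cannot by itself yield $\liminf=0$. To cross this barrier I would invoke the variational geometry of the Tikhonov path, namely that $x_{\epsilon(t)}$ minimizes the strongly convex $\mathcal{L}_{\epsilon(t)}$, whence $\|x_{\epsilon(t)}\|\le\|\bar{x}^*\|$ and, from \eqref{zf2} together with the monotonicity of $\nabla_x\mathcal{L}_{\rho}(\cdot,\bar{\lambda}^*)$ and $\nabla_x\mathcal{L}_{\rho}(\bar{x}^*,\bar{\lambda}^*)=0$, the projection inequality $\langle x_{\epsilon(t)},x_{\epsilon(t)}-\bar{x}^*\rangle\le0$. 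Inserting the identity $\|x-\bar{x}^*\|^2-\|\bar{x}^*\|^2=\|x-x_{\epsilon(t)}\|^2-\|x_{\epsilon(t)}\|^2+2\langle x,x_{\epsilon(t)}-\bar{x}^*\rangle$ into the integral estimate, the divergent contribution is governed by $\|x_{\epsilon(t)}\|^2-\|\bar{x}^*\|^2\to0$ rather than by $\|\bar{x}^*\|^2$, which, used in tandem with the master bound and \eqref{Slow} as in Attouch et al.\ and Zhu et al.\ \cite{AttouchZH2018,zhuhufang1}, is what promotes $\liminf_t\big(\|x(t)-\bar{x}^*\|^2-\|\bar{x}^*\|^2\big)\le0$ to $\liminf_t\|x(t)-x_{\epsilon(t)}\|=0$. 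I expect the verification that the cross term $\langle x,x_{\epsilon(t)}-\bar{x}^*\rangle$ does not reintroduce a divergence to be the technically heaviest point.

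Finally, for the strengthened conclusion the dichotomy that $\{x(t):t\ge T\}$ lies entirely in $B(0,\|\bar{x}^*\|)$ or entirely in its complement makes the sign of $\|x(t)\|-\|\bar{x}^*\|$ constant for $t\ge T$, and since $\|x_{\epsilon(t)}\|\to\|\bar{x}^*\|$ this pins down the sign of $\|x(t)\|-\|x_{\epsilon(t)}\|$ asymptotically. Using the master bound to secure boundedness of $(x(t))$, and the sequence $t_n\to+\infty$ with $x(t_n)\to\bar{x}^*$ provided by the first part (so that $\|x(t_n)\|\to\|\bar{x}^*\|$), I would rule out any weak cluster point of $(x(t))$ other than $\bar{x}^*$: weak lower semicontinuity of the norm, the constant sign of $\|x(t)\|^2-\|\bar{x}^*\|^2$, and the Radon--Riesz property of the Hilbert space then force $x(t)\to\bar{x}^*$ strongly. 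I anticipate this upgrade from subsequential to full convergence to be the decisive and most delicate use of the ball hypothesis.
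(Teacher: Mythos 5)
Your preparatory computations are individually correct (the sharpened energy inequality $\dot{\mathcal{E}}(t)\le-\frac{\theta t\beta(t)\epsilon(t)}{2}\left(\|x(t)-\bar{x}^*\|^2-\|\bar{x}^*\|^2\right)$, the divergence of $\int t\beta(t)\epsilon(t)\,dt$ under \eqref{Slow}, the projection inequality $\langle x_{\epsilon(t)},x_{\epsilon(t)}-\bar{x}^*\rangle\le0$), but the reduction you build on them is a dead end. When you discard the term $-\frac{\epsilon(t)}{2}\|x_{\epsilon(t)}\|^2$ as ``harmless'', you throw away exactly what makes the argument work: by \eqref{zs3} one has $\mathcal{E}(t)\ge\theta^2t^2\beta(t)\frac{\epsilon(t)}{2}\|x(t)\|^2$, so the right-hand side of your master bound satisfies $\frac{2\mathcal{E}(t)}{\theta^2t^2\beta(t)\epsilon(t)}\ge\|x(t)\|^2$. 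Hence, if $\liminf_{t}\mathcal{E}(t)/(t^2\beta(t)\epsilon(t))=0$ held, then along a sequence both $\|x(t_n)\|\to0$ and $\|x(t_n)-x_{\epsilon(t_n)}\|\to0$, which with \eqref{zf3} would force $\bar{x}^*=0$. So the target of your reduction is provably unreachable whenever $\bar{x}^*\ne0$, even though the theorem is true. The paper instead works with $\widetilde{\mathcal{E}}(t)=\mathcal{E}(t)/(\theta^2t^2)-\frac{\beta(t)\epsilon(t)}{2}\|\bar{x}^*\|^2$ (see \eqref{zr1}) and keeps \emph{both} $\|\bar{x}^*\|^2$-sized terms, arriving (in the favourable case) at $\|x(t)-x_{\epsilon(t)}\|^2\le\frac{2T^2\widetilde{\mathcal{E}}(T)}{t^2\beta(t)\epsilon(t)}+\|\bar{x}^*\|^2-\|x_{\epsilon(t)}\|^2$, where the last two terms cancel in the limit by \eqref{zf3}. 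Your attempted repair via the identity and the projection inequality fails for the same structural reason: it merely relabels the divergent contribution $\|\bar{x}^*\|^2\int t\beta\epsilon$ as $\|x_{\epsilon(t)}\|^2\int t\beta\epsilon$, and $\|x_{\epsilon(t)}\|^2\to\|\bar{x}^*\|^2$, so the barrier you yourself identified is not crossed; what you actually prove is only $\liminf_t\left(\|x(t)-\bar{x}^*\|^2-\|\bar{x}^*\|^2\right)\le0$, which is strictly weaker than the theorem.

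The missing idea is the mechanism the paper uses to identify limit points, namely weak compactness combined with the minimization property of Theorem \ref{ztt3.2} and the inclusion \eqref{zttonefang11}; note that this is precisely where the hypothesis \eqref{intbe} enters, and your proposal never uses \eqref{intbe} at all. The paper's proof is a trichotomy: if the trajectory eventually stays outside $B(0,\|\bar{x}^*\|)$, the right-hand side of the differential inequality \eqref{sunaf} is nonpositive, giving $\widetilde{\mathcal{E}}(t)=\mathcal{O}(1/t^2)$ and strong convergence via Lemma \ref{lemma 5.1} and \eqref{Slow}; if it eventually stays inside, every weak cluster point $\bar{x}$ satisfies $\mathcal{L}_{\rho}(\bar{x},\bar{\lambda}^*)\le\lim_n\mathcal{L}_{\rho}(x(t_n),\bar{\lambda}^*)=\mathcal{L}_{\rho}(\bar{x}^*,\bar{\lambda}^*)$ by Theorem \ref{ztt3.2}, hence $\bar{x}\in\mathcal{D}(\bar{\lambda}^*)\subseteq S$ by \eqref{zttonefang11}, and then $\|\bar{x}\|\le\|\bar{x}^*\|$ together with minimality of the norm gives $\bar{x}=\bar{x}^*$, after which norm convergence upgrades weak to strong convergence; if neither, the same cluster-point argument along crossing times $\|x(t_n)\|=\|\bar{x}^*\|$ yields the $\liminf$ claim. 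Your treatment of the dichotomy case inherits both gaps: it cites ``the sequence $x(t_n)\to\bar{x}^*$ provided by the first part,'' which is exactly the unproved part, and its cluster-point step (weak lower semicontinuity of the norm, sign constancy, Radon--Riesz) never shows that cluster points lie in $S$ --- without that, nothing identifies them with $\bar{x}^*$, and in the ``outside the ball'' case weak lower semicontinuity gives no information at all.
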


\begin{proof}
Let $\bar{\lambda}^*\in\mathcal{Y}$ be a dual solution such that $(\bar{x}^*, \bar{\lambda}^*)\in\Omega$. It follows from \eqref{zs3} that 
\begin{eqnarray*}
	\frac{\mathcal{E}(t)}{\theta^2t^2}&=&\beta(t)\left(\mathcal{L}_{\rho}(x(t),\bar{\lambda}^*)-\mathcal{L}_{\rho}(\bar{x}^*,\bar{\lambda}^*)+\frac{\epsilon(t)}{2}\|x(t)\|^2\right)
	+\frac{1}{2}\left\|\frac{1}{\theta t}(x(t)-\bar{x}^*)+\dot{x}(t)\right\|^2\nonumber\\
	&&+\frac{1}{2}\left\|\frac{1}{\theta t}(\lambda(t)-\bar{\lambda}^*)+ \dot{\lambda}(t)\right\|^2+\frac{(\alpha-1)\theta-1}{2\theta^2 t^2}\|x(t)-\bar{x}^*\|^2\nonumber\\
	&&+\frac{(\alpha-1)\theta-1}{2\theta^2 t^2}\|\lambda(t)-\bar{\lambda}^*\|^2.
\end{eqnarray*}
Denote
\begin{eqnarray}\label{zr1}
	\widetilde{\mathcal{E}}(t)&=&\beta(t)\left(\mathcal{L}_{\epsilon(t)}(x(t))-\mathcal{L}_{\epsilon(t)}(\bar{x}^*)\right)
	+\frac{1}{2}\left\|\frac{1}{\theta t}(x(t)-\bar{x}^*)+\dot{x}(t)\right\|^2\nonumber\\
	&&+\frac{1}{2}\left\|\frac{1}{\theta t}(\lambda(t)-\bar{\lambda}^*)+ \dot{\lambda}(t)\right\|^2+\frac{(\alpha-1)\theta-1}{2\theta^2 t^2}\|x(t)-\bar{x}^*\|^2\nonumber\\
	&&+\frac{(\alpha-1)\theta-1}{2\theta^2 t^2}\|\lambda(t)-\bar{\lambda}^*\|^2.
\end{eqnarray}
Then, 
\begin{eqnarray*}
	\widetilde{\mathcal{E}}(t)=\frac{\mathcal{E}(t)}{\theta^2t^2}-\frac{\beta(t)\epsilon(t)}{2}\|\bar{x}^*\|^2.
\end{eqnarray*}
Using Assumption \ref{AS-F}, \eqref{S-C}, and \eqref{zr2},  we have 
\begin{eqnarray}\label{sunaf}
\frac{d}{dt}\left(\widetilde{\mathcal{E}}(t)\right)+\frac{2}{t}\widetilde{\mathcal{E}}(t)&=&\frac{\dot{\mathcal{E}}(t)}{\theta^2t^2}-\frac{\dot{\beta}(t)\epsilon(t)+\beta(t)\dot{\epsilon}(t)}{2}\|\bar{x}^*\|^2-\frac{\beta(t)\epsilon(t)}{t}\|\bar{x}^*\|^2\nonumber\\
&\leq&\left(\frac{(2\theta-1)\beta(t)+\theta t\dot{\beta}(t)}{2\theta t}\epsilon(t)+\frac{\beta(t)\dot{\epsilon}(t)}{2}\right)\left(\|x(t)\|^2-\|\bar{x}^*\|^2\right).
\end{eqnarray}

We will complete the proof by  analyzing separately the following three situations  according to the sign of $\|\bar{x}^*\|-\|x(t)\|$.

{\bf Case I:} There exists a large enough $T>0$ such that $\{x(t): t\geq T\}$ stays in the complement of the open ball $B(0,\|\bar{x}^*\|)$, which means 
\begin{eqnarray}\label{zf4}
	\|x(t)\|\geq\|\bar{x}^*\|, \quad \forall t\geq T.
\end{eqnarray}

Using  Assumption \ref{AS-F}, \eqref{S-C} again, we get
$$\frac{(2\theta-1)\beta(t)+\theta t\dot{\beta}(t)}{2\theta t}\epsilon(t)+\frac{\beta(t)\dot{\epsilon}(t)}{2}\leq0.$$  
This together with \eqref{sunaf} and \eqref{zf4} implies
\begin{eqnarray*}
	\frac{d}{dt}\left(\widetilde{\mathcal{E}}(t)\right)+\frac{2}{t}\widetilde{\mathcal{E}}(t)\leq0, \qquad \forall t\geq T.
\end{eqnarray*}    
Multiplying it by $t^{2}$, we obtain
\begin{eqnarray*}
\frac{d}{dt}\left(t^2\widetilde{\mathcal{E}}(t)\right)\leq0, \qquad \forall t\geq T.
\end{eqnarray*}    
Integrating the above inequality over $[T, t]$ and  then dividing by $t^2$, we have    
\begin{eqnarray*}
\widetilde{\mathcal{E}}(t)\leq \frac{T^2\widetilde{\mathcal{E}}(T)}{t^2}, \qquad \forall t\geq t_0, 
\end{eqnarray*} 
which together with \eqref{zr1} leads to
$$\beta(t)\left(\mathcal{L}_{\epsilon(t)}(x(t))-\mathcal{L}_{\epsilon(t)}(\bar{x}^*)\right)\leq\widetilde{\mathcal{E}}(t)\leq \frac{T^2\widetilde{\mathcal{E}}(T)}{t^2}.$$
By Lemma \ref{lemma 5.1}, \eqref{Slow}, and \eqref{zf3}, 
\begin{eqnarray*}
0\leq\|x(t)-x_{\epsilon(t)}\|^2\leq\frac{2T^2\widetilde{\mathcal{E}}(T)}{t^2\beta(t)\epsilon(t)}+\|\bar{x}^*\|^2-\|x_{\epsilon(t)}\|^2\to 0 \text{ as } t\to+\infty,
\end{eqnarray*} 
which means
$$\lim_{t\rightarrow+\infty}\|x(t)-\bar{x}^*\|=0.$$

{\bf Case II:} There exists a large enough $T>0$ such that $\{x(t): t\geq T\}$ is in the open ball $B(0,\|\bar{x}^*\|)$, i.e., 
\begin{eqnarray}\label{zr3}
	\|x(t)\|<\|\bar{x}^*\|, \quad \forall t\geq T.
\end{eqnarray}
Let $\bar{x}\in\mathcal{X}$ be a weak sequential cluster point of $(x(t))_{t\geq t_{0}}$. Then, there exists a subsequence $\{t_n\}_{n\in \mathbb{N}}\subseteq[T, +\infty)$ satisfying $t_n\to+\infty$ such that
\begin{eqnarray}\label{zr4}
	x(t_{n})\rightharpoonup\bar{x}   \quad as \quad n\rightarrow +\infty.
\end{eqnarray}
Then, we have 
\begin{eqnarray*}\label{zr5}
	\mathcal{L}_{\rho}(\bar{x},\bar{\lambda}^*)\leq\liminf_{n\rightarrow+\infty}\mathcal{L}_{\rho}(x(t_n),\bar{\lambda}^*).
\end{eqnarray*}
Further, according to Theorem \ref{ztt3.2}, we have
\begin{eqnarray*}
	\lim_{n\rightarrow+\infty}\mathcal{L}_{\rho}(x(t_n),\bar{\lambda}^*)-\mathcal{L}_{\rho}(\bar{x}^*,\bar{\lambda}^*)=0.
\end{eqnarray*}
Thus, we obtain 
\begin{eqnarray*}
	\mathcal{L}_{\rho}(\bar{x},\bar{\lambda}^*)\leq\mathcal{L}_{\rho}(\bar{x}^*,\bar{\lambda}^*)=\min_{x\in\mathcal{X}}\mathcal{L}_{\rho}(x,\bar{\lambda}^*)\leq\mathcal{L}_{\rho}(\bar{x},\bar{\lambda}^*))\leq\mathcal{L}_{\rho}(\bar{x}^*,\bar{\lambda}^*).
\end{eqnarray*}
This yields
$$\bar{x}\in\arg\min_{x\in\mathcal{X}}\mathcal{L}_{\rho}(x,\bar{\lambda}^*),$$
which together with \eqref{zttonefang11} implies $\bar{x}\in S$. 
Using \eqref{zr3}, we obtain
\begin{eqnarray*}
	\limsup_{n\rightarrow+\infty}\|x(t_{n})\|\leq\|\bar{x}^*\|.
\end{eqnarray*}
Since $\|\cdot\|$ is weakly lower semicontinuous, from \eqref{zr4} we get
\begin{eqnarray*}
	\|\bar{x}\|\leq\liminf_{t_{n}\rightarrow+\infty}\|x(t_{n})\|\leq\limsup_{n\rightarrow+\infty}\|x(t_{n})\|\le\|\bar{x}^*\|,
\end{eqnarray*}
which together with $\bar{x}\in S$ and $\bar{x}^*=\text{Proj}_{S}0$ implies that $\bar{x}=\bar{x}^*$.
Thus, $(x(t))_{t\geq t_0}$ enjoys a unique weak sequential cluster point $\bar{x}^*$. Consequently, 
\begin{equation}\label{xweak}
x(t)\rightharpoonup\bar{x}^*   \quad as \quad t\rightarrow +\infty
\end{equation}
and then
$$\|\bar{x}^*\|\leq\liminf_{t\rightarrow+\infty}\|x(t)\|.$$
On the other hand, from \eqref{zr3} we have
$$\limsup_{t\rightarrow+\infty}\|x(t)\|\leq\|\bar{x}^*\|.$$
As a result,
$$\lim_{t\rightarrow+\infty}\|x(t)\|=\|\bar{x}^*\|,$$
which, combined with  \eqref{xweak}, implies
$$\lim_{t\rightarrow+\infty}\|x(t)-\bar{x}^*\|=0.$$

{\bf Case III:} For any $T>t_0$, the trajectory $\{x(t): t\geq T\}$ does not remain in the  open ball $B(0,\|\bar{x}^*\|)$, nor its complent. In this case, there exists a subsequence $(t_n)_{n\in\mathbb{N}}\subseteq[t_0,+\infty)$ satisfying $t_n\rightarrow+\infty$ as $n\rightarrow+\infty$ such that
\begin{eqnarray*}\label{zr6}
	\|x(t_n)\|=\|\bar{x}^*\|, \quad \forall n\in\mathbb{N}.
\end{eqnarray*}
Let   $\tilde{x}\in\mathcal{X}$ be a weak sequential cluster point of $(x(t_n))_{n\in\mathbb{N}}$.  By arguments similar to Case II, we have
$$x(t_n)\rightharpoonup \bar{x}^*, \quad as \quad n\rightarrow+\infty.$$
Hence, we can obtain 
\begin{eqnarray*}
	\lim_{n\rightarrow+\infty}\|x(t_n)-\bar{x}^*\|=0,
\end{eqnarray*}
which implies
\begin{eqnarray*}
	\liminf_{t\rightarrow+\infty}\|x(t)-\bar{x}^*\|=0.
\end{eqnarray*}
\end{proof}

\begin{remark}\label{rmkstrong}
 Theorem \ref{ztt3.1} can be regarded as an extension of \cite[Theorem 4.1]{AttouchZH2018}, which establishes  the strong trajectory of  $\text{(AVD)}_{\epsilon}$  to the minimal norm solution of Problem \eqref{zfg1} under similar assumptions.   The results of Theorem \ref{ztt3.1} are also consistent with the ones reported in \cite[Theorem 4.2]{zhuhufang1} on $\text{(ZHF-AVD)}$.
\end{remark}

%\begin{remark}\label{corollary: eqfangwe2}
%	Taking $\beta(t)\epsilon(t)=\frac{c}{t^r}$ with $0<r<2$ and $c>0$, we have $\int_{t_0}^{+\infty}\frac{\beta(t)\epsilon(t)}{t}<+\infty$ and $\lim_{t\rightarrow+\infty}t^2\beta(t)%\epsilon(t)=+\infty$. As a consequence, we can obtain all conclusions of Theorem \ref{theoremztt5.1}.
%\end{remark}

\section{Numerical experiments}

In this section, we illustrate the theoretical results on System \eqref{z2} by  two examples. All codes are run on a PC (with 2.20GHz Dual-Core Intel Core i7 and 16GB memory) under MATLAB Version R2017b. All dynamical systems are solved numerically by using the ode23 in MATLAB. 

\begin{example}\label{exam1}
	Consider the linear equality constrained quadratic programming problem	
	\begin{eqnarray}\label{LCPP}
		\min_{x}\quad \frac{1}{2}x^TQx+q^Tx, \quad\text{ s.t. } Ax=b, 
	\end{eqnarray}	
	where $A\in \mathbb{R}^{m\times n}$, $b\in\mathbb{R}^{m}$, $q\in\mathbb{R}^{n}$, and $Q\in\mathbb{R}^{n\times n}$ is a symmetric and positive definite matrix.
\end{example}
By this example, we test the behavior of System \eqref{z2} in  $|f(x(t))-f(x^*)|$,  $\|Ax(t)-b\|$, and  $\|(x(t), \lambda(t))-(x^*, \lambda^*)\|$, compared  with $\text{(Z\mbox{-}AVD)}$ considered in \cite{ZengXLandLeiJLandChenJ(2022),BNguyen2022}. 

Let $b$ be generated by the uniform distribution and $q$ be generated by the standard Gaussian distribution. Set $Q=H^TH+0.01Id$, where  $H\in\mathbb{R}^{n\times n}$ is generated by the standard Gaussian distribution and $Id$ denotes the identity matrix.
By using the Matlab function $quadprog$ with tolerance $10^{-15}$, we can obtain the unique  solution $x^*$ and the optimal value $f(x^*)$ of Problem \eqref{LCPP}. Further, take $\alpha=15$, $\theta=\frac{1}{13}$, $\rho=1$, $\epsilon(t)=\frac{1}{t^{r_1}}$ 
and $\beta(t)=t^{r_2}$ with $r_1-r_2>2$ in System \eqref{z2}, and take $\alpha=15$, $\theta=\frac{1}{13}$ and $\rho=1$ in $\text{(Z\mbox{-}AVD)}$.  Take the starting point $(x(1),\lambda(1),\dot{x}(1),\dot{\lambda}(1))=\mathbf{1}^{2(n+m)\times1}$ in  both System \eqref{z2} and $\text{(Z\mbox{-}AVD)}$.

First, let $m=20$, $n=50$, $r_1=4$ and $r_2=\{1, 1.5, 1.8\}$, and let $A\in\mathbb{R}^{m\times n}$ be generated by the standard Gaussian distribution. Figure \ref{fig:testfigwe} displays the behaviors of System \eqref{z2} and  $\text{(Z\mbox{-}AVD)}$ in  $|f(x(t))-f(x^*)|$ and $\|Ax(t)-b\|$.  The experiment result supports the theoretical result of Theorem \ref{ztt3.1}.

Second, let $A\in \mathbb{R}^{m\times n}$ with $m=n$ be an orthogonal matrix obtained by using QR decomposition for an invertible matrix $D\in \mathbb{R}^{n\times n}$, where 
$D\in \mathbb{R}^{n\times n} $ is generated by the standard Gaussian distribution. Then, the associated dual problem has a unique $\lambda^*\in\mathbb{R}^{m}$. In Figure \ref{fig:testfigwe2}, we plot the errors  $\|(x(t), \lambda(t))-(x^*, \lambda^*)\|$ of System \eqref{z2} under different choices of $m$ and $r_2$.

 As shown in Figure \ref{fig:testfigwe} and  Figure \ref{fig:testfigwe2},  System \eqref{z2} outperforms $\text{(PD\mbox{-}AVD)}$, and  the bigger the parameter $r_2$ is, the better  System \eqref{z2} performs.

\begin{figure*}[h]
	\centering
	{
		\begin{minipage}[t]{0.48\linewidth}
			\centering
			\includegraphics[width=2.75in]{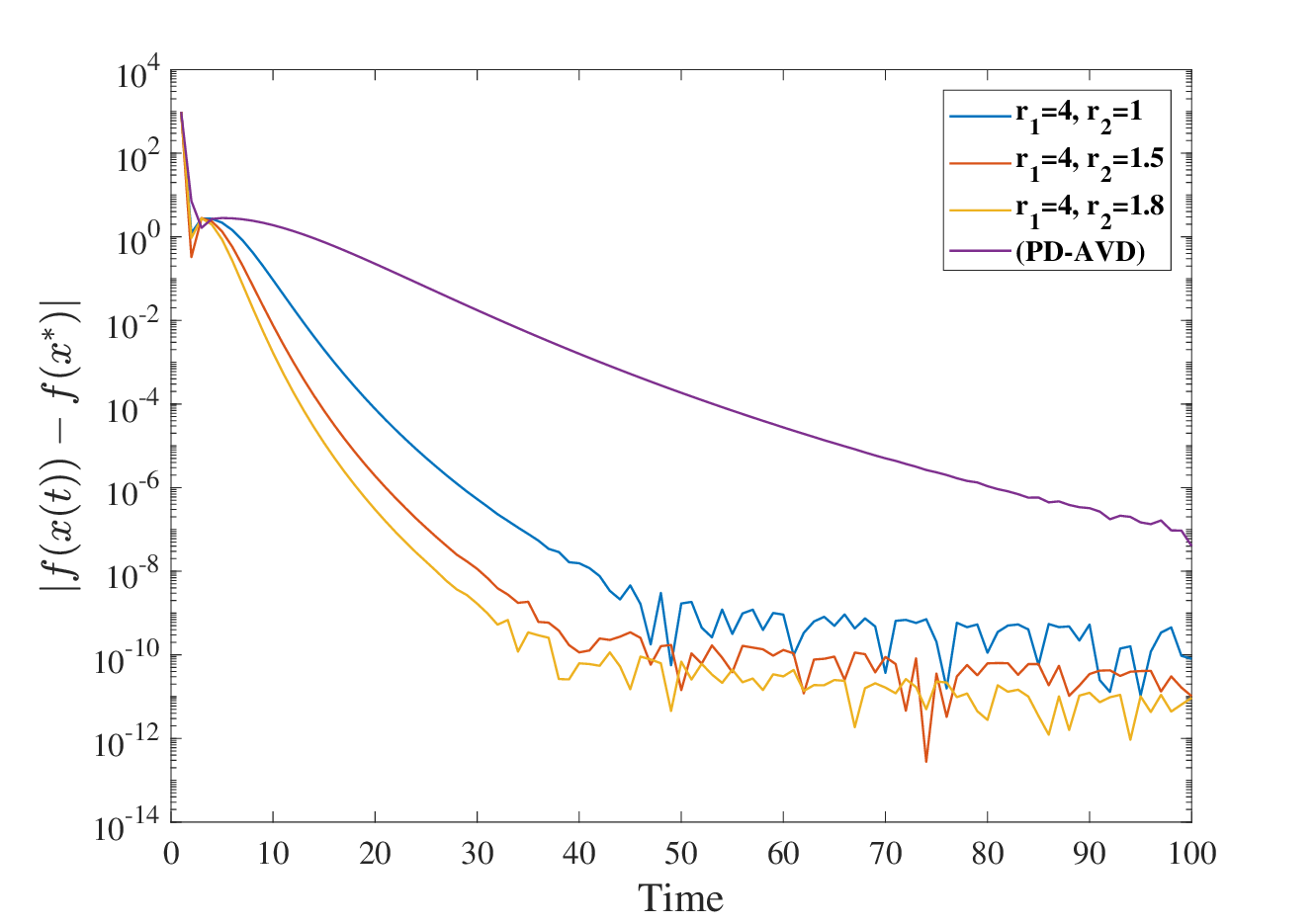}
		\end{minipage}%
	}
	{
		\begin{minipage}[t]{0.48\linewidth}
			\centering
			\includegraphics[width=2.75in]{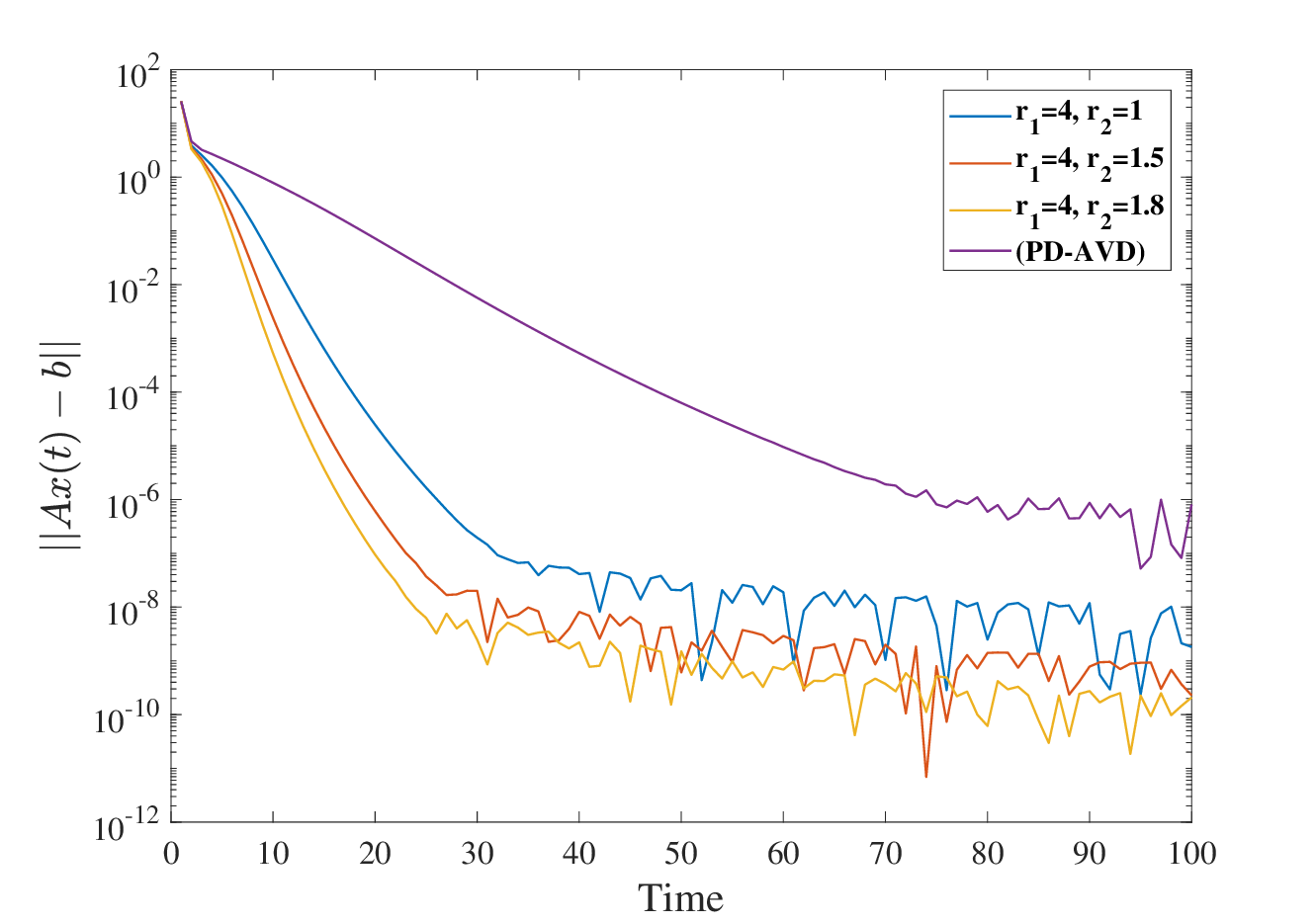}
		\end{minipage}%
	}
	\caption{Behaviors of   $|f(x(t))-f(x^*)|$ and $\|Ax(t)-b\|$ of System \eqref{z2} and (Z-AVD) in Example \ref{exam1}}\label{fig:testfigwe}
	\centering
\end{figure*}
\begin{figure*}[h]
	\centering
	\subfloat[m=n=10.]
	{
		\begin{minipage}[t]{0.48\linewidth}
			\centering
			\includegraphics[width=2.75in]{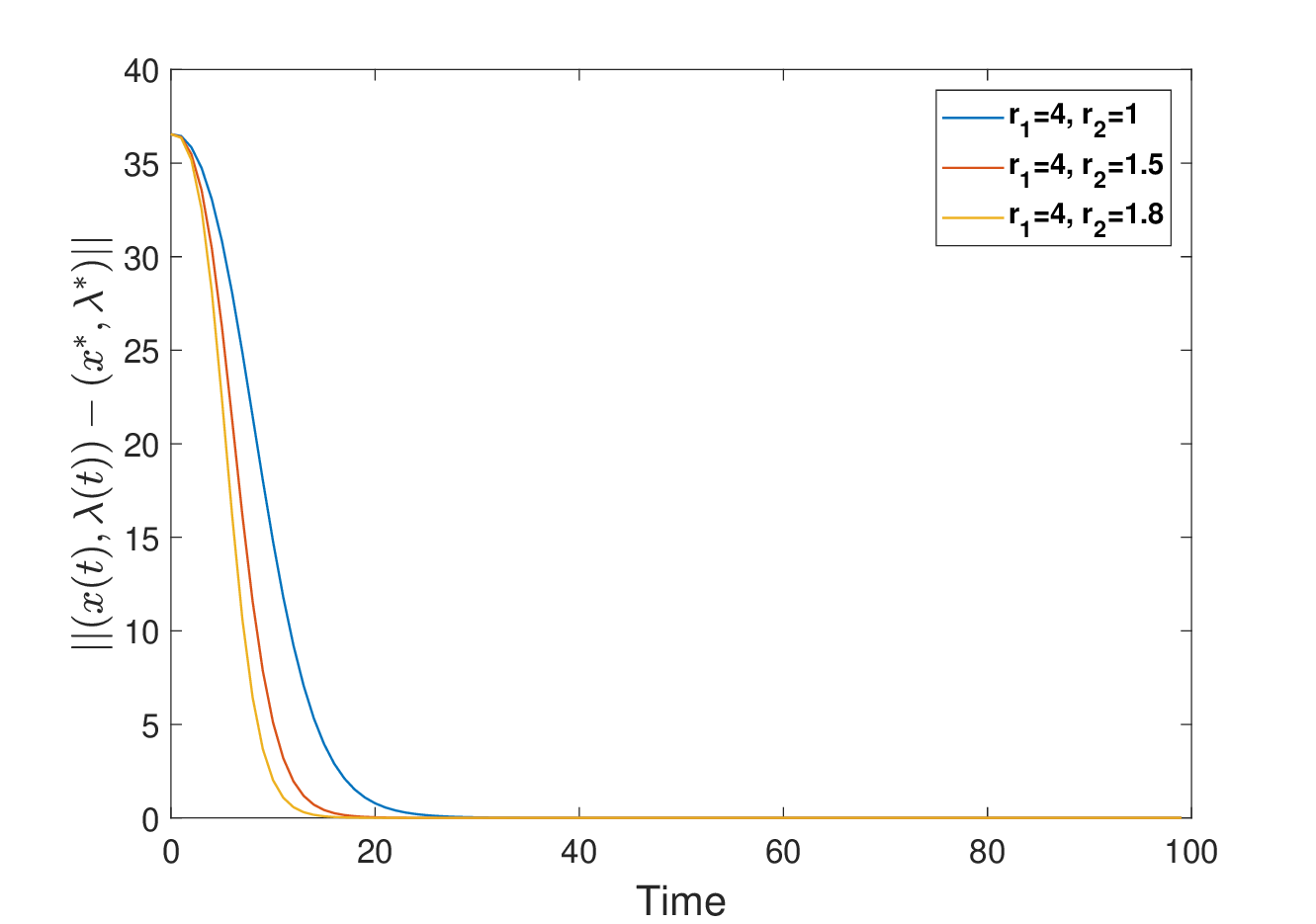}
		\end{minipage}%
	}
	\subfloat[m=n=50.]
	{
		\begin{minipage}[t]{0.48\linewidth}
			\centering
			\includegraphics[width=2.75in]{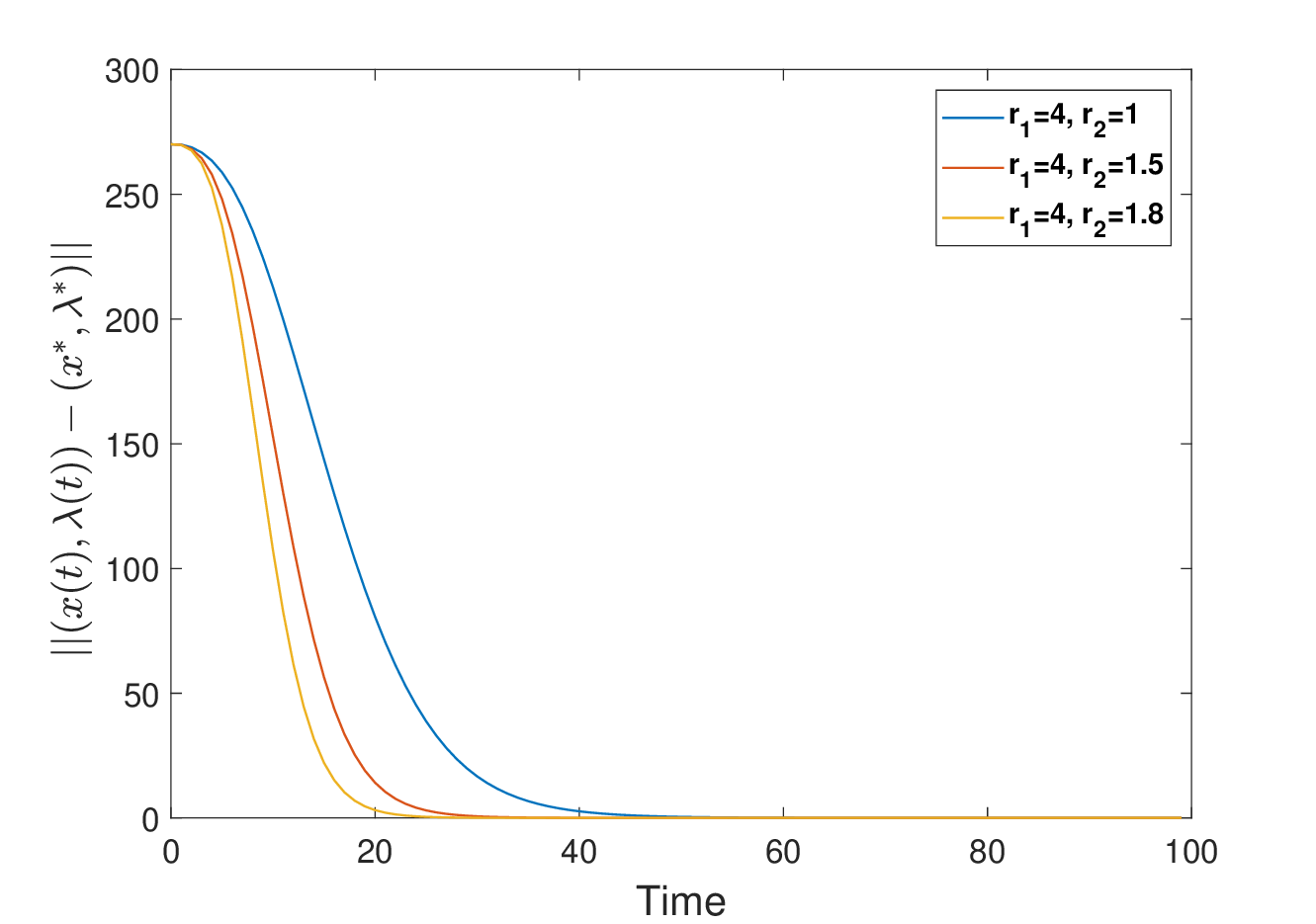}
		\end{minipage}%
	}
	\caption{Behavior of $\|(x(t), \lambda(t))-(x^*, \lambda^*)\|$ of  System \eqref{z2} in Example \ref{exam1} }\label{fig:testfigwe2}
	\centering
\end{figure*}

\begin{example}\label{exam2}
	Consider the equality constrained convex optimization problem
	\begin{eqnarray}\label{ztthufang2}
		\min_{x\in\mathbb{R}^3}\quad f(x)=(dx_{1}+ex_{2}+vx_{3})^2, \quad\text{ s.t. } Ax=b,
	\end{eqnarray}
	where $f:\mathbb{R}^3\rightarrow\mathbb{R}$, $ A=(d,-e,v)^T$, $b=0$, and $d,e,v\in\mathbb{R}\backslash\{0\}$.
\end{example}
This example was considered in \cite[Section 5]{zhuhufang1}, which was motivated by the example  in \cite[Section 4]{Laszlo-JDE} for an unconstrained optimization problem.  The solution set of Problem \eqref{ztthufang2} is $\{x^*=(x_{1},0,-\frac{d}{v}x_{1}): x_{1}\in\mathbb{R}\}$, the optimal value $f(x^*)=0$, and the minimal norm solution is $\bar{x}^*=(0,0,0)$.
Take the starting points $x(1)=(1,1,-1)$, $\lambda(1)=1$, $\dot{x}(1)=(1,1,1)$, $\dot{\lambda}(1)=1$.

Take $d=5,$ $e=1$,  $v=1$, $\rho=1$, $\alpha=13$, $\theta=\frac{1}{8}$, $\epsilon(t)=\frac{2.8}{t^{r_1}}$ with $r_1=\{1, 1.9, 2.4\}$ and $\beta(t)= t^{r_2}$ with $r_2=0.9$. Figure \ref{fig:testfig} presents the behaviors of  $\|x(t)-\bar{x}^*\|$ and $|f(x(t))-f(x^*)|$ along  the trajectory $(x(t))_{t\geq t_{0}}$ of System \eqref{z2}.
\begin{figure*}[htbp]
	\centering
	{
		\begin{minipage}[t]{0.48\linewidth}
			\centering
			\includegraphics[width=2.75in]{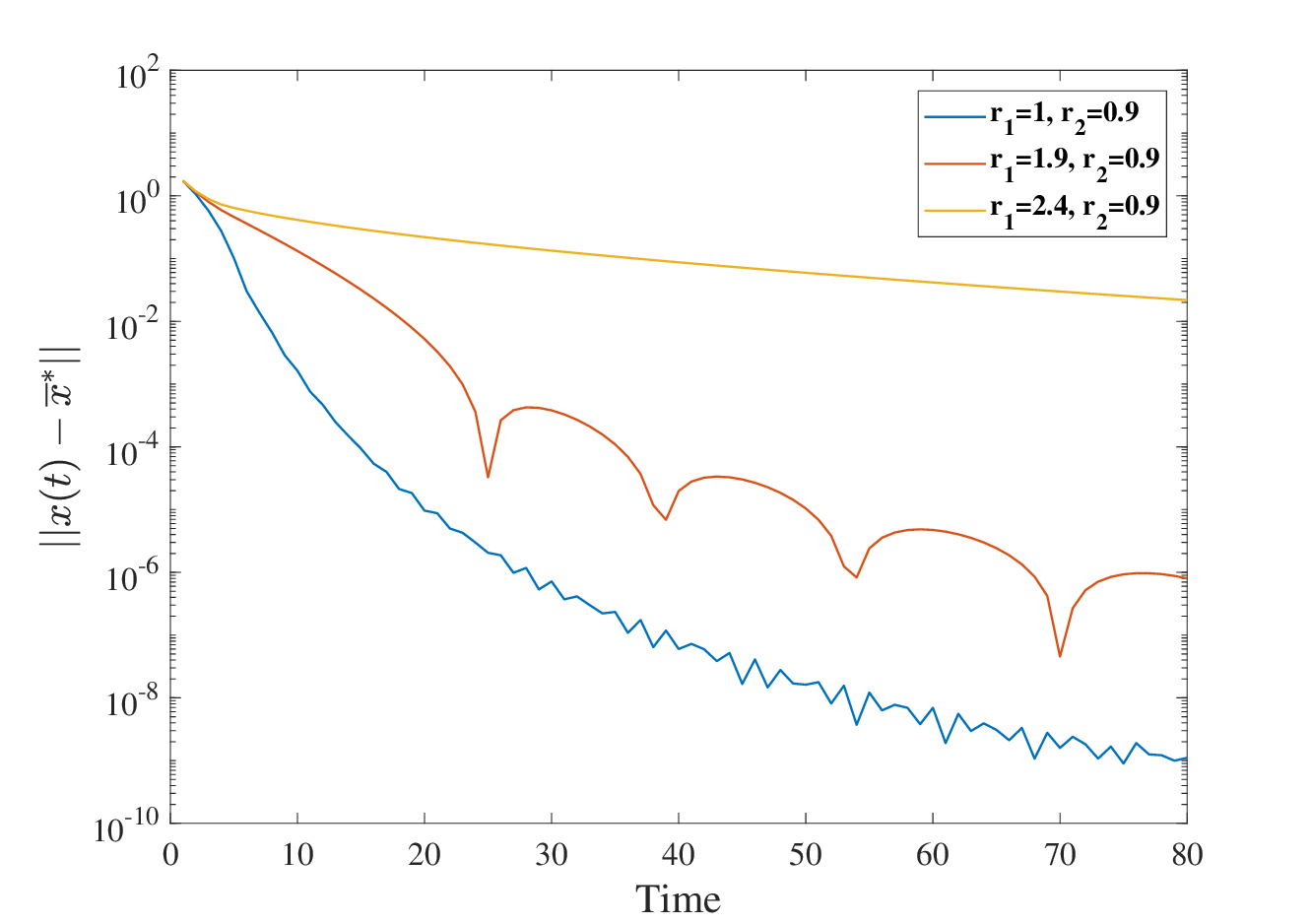}
		\end{minipage}%
	}
	{
		\begin{minipage}[t]{0.48\linewidth}
			\centering
			\includegraphics[width=2.75in]{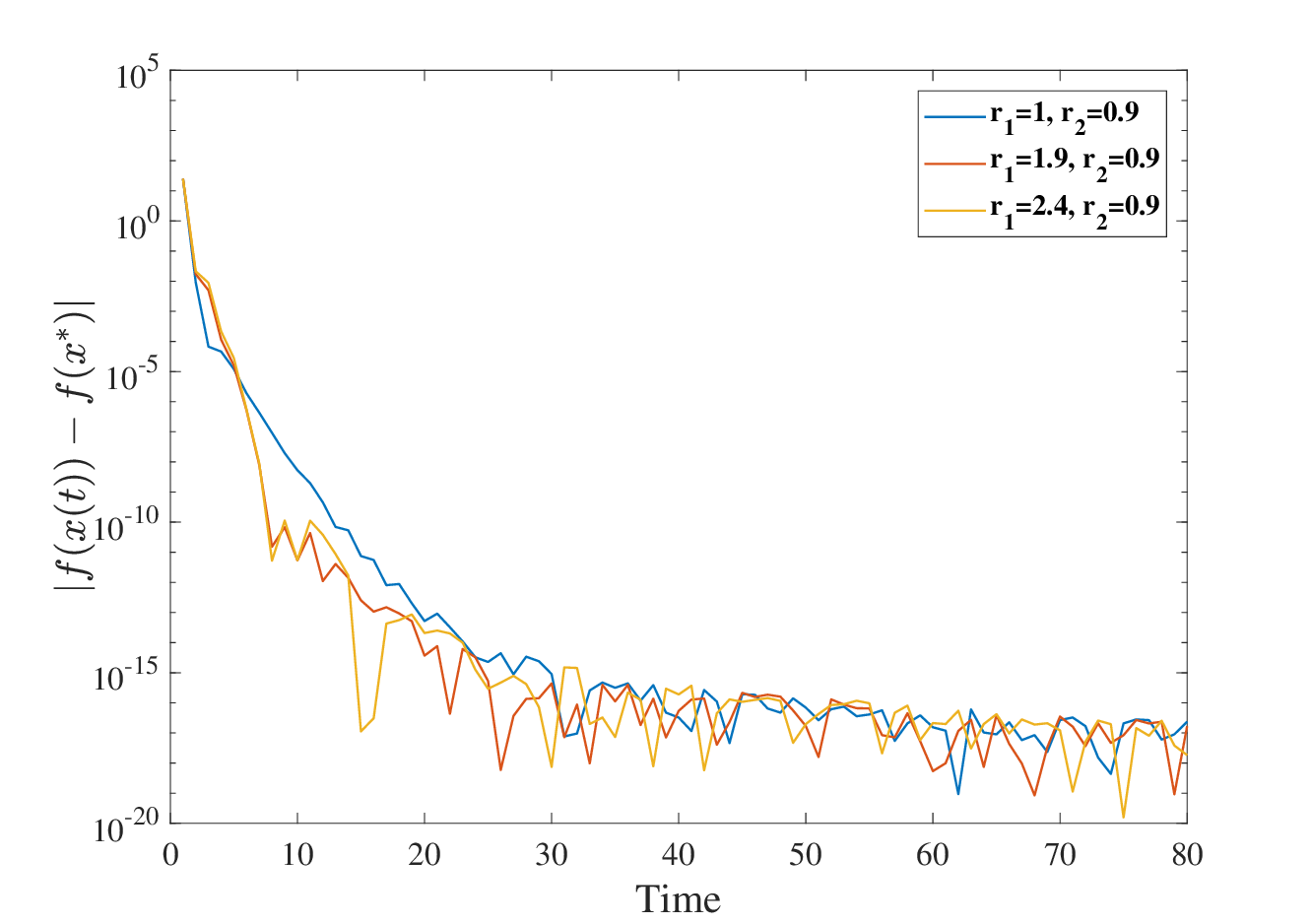}
		\end{minipage}%
	}
    \caption{Error analysis of System \eqref{z2}  with different parameters $r_1$ and $r_2$ in Example \ref{exam2}}\label{fig:testfig}
	\centering
\end{figure*}

As shown in Figure \ref{fig:testfig}, the primal trajectory $(x(t))_{t\geq t_{0}}$ of \eqref{z2} converges to the minimal norm solution $\bar{x}^*$. This supports the theoretical result of Theorem \ref{theoremztt5.1}.  Further,  System \eqref{z2} performs better in the error $\|x(t)-\bar{x}^*\|$ when $r_1-r_2$ is smaller, and the objective error $|f(x(t))-f(x^*)|$ is not very sensitive to the Tikhonov regularization parameter $r_1$. 

Remember that  System \eqref{z2} is just a Tikhonov regularized version of $\text{(HN-AVD)}$.   Next, we use Example \ref{exam1} to compare System \eqref{z2} with  $\text{(HN-AVD)}$ in the behaviors of the primal trajectory $x(t)$.  Take $\rho=1$, $\alpha=13$, $\theta=\frac{1}{8}$, $\beta(t)=t^{0.9}$ and $\epsilon(t)=\frac{2.8}{t}$ in System \eqref{z2} and take $\rho=1$, $\alpha=13$, $\theta=\frac{1}{8}$ and $\delta(t)=t^{0.9}$ in $\text{(HN-AVD)}$. Under different choices of $d$, $e$ and $v$, Figure \ref{fig:titstfig} displays the convergence of the primal trajectory $(x(t))_{t\geq t_0}$ generated by System \eqref{z2} to the minimal norm solution $\bar{x}^*=(0,0,0)$, while  the trajectory $(x(t))_{t\geq t_0}$ generated by $\text{(HN-AVD)}$ need not to converge to the minimal norm solution, which can be observed in Figure \ref{fig:titstfigst}.

\begin{figure*}[h]
	\centering
	\subfloat[d=5, e=1, v=1.]
	{
		\begin{minipage}[t]{0.48\linewidth}
			\centering
			\includegraphics[width=2.75in]{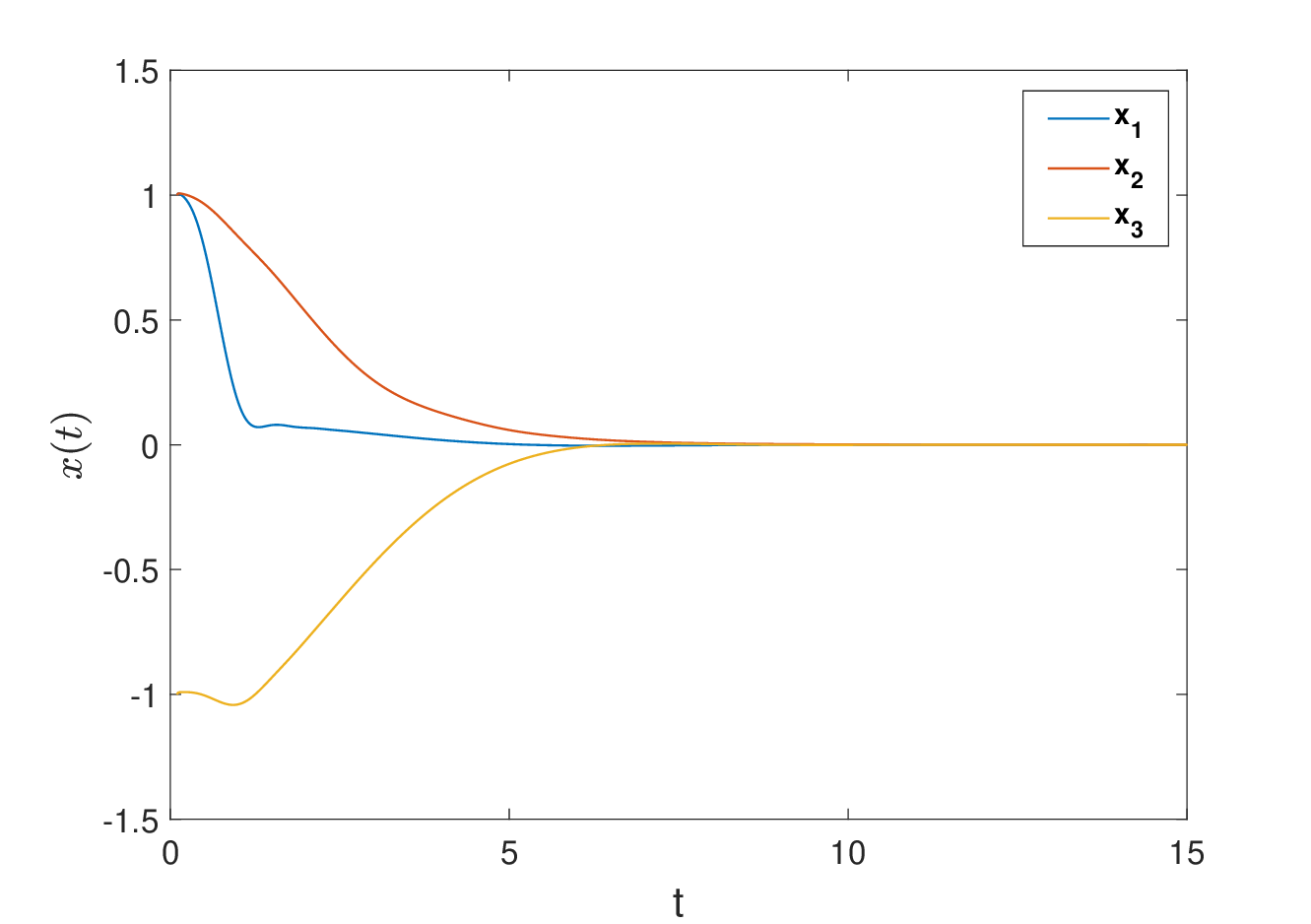}
		\end{minipage}%
	}
	\subfloat[d=120, e=5, v=25.]
	{
		\begin{minipage}[t]{0.48\linewidth}
			\centering
			\includegraphics[width=2.75in]{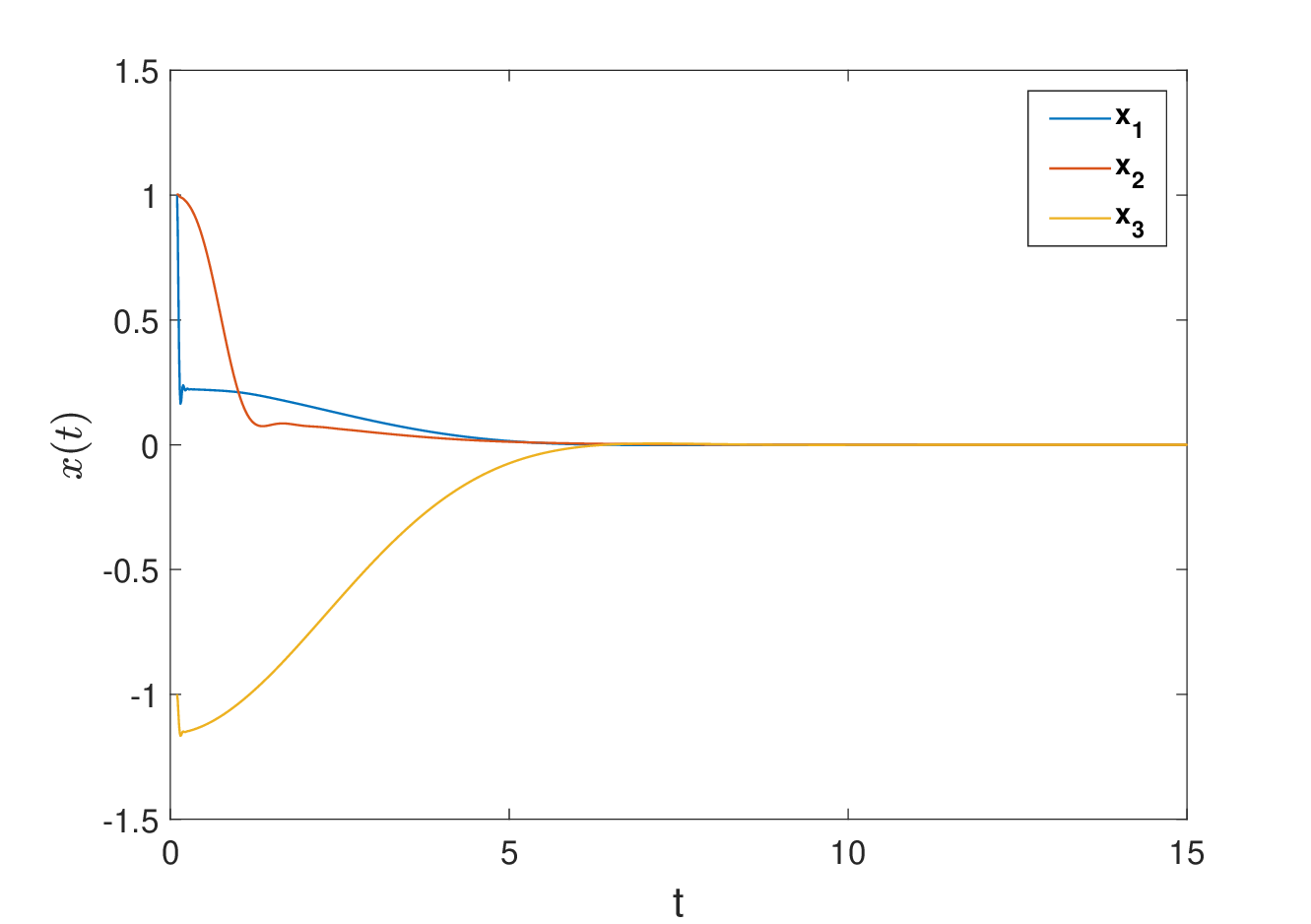}
		\end{minipage}%
	}
	\caption{The behavior of $(x(t))_{t\geq t_0}$ of System \eqref{z2}in Example \ref{exam2}}
	\label{fig:titstfig}
	\centering
\end{figure*}
\begin{figure*}[h]
	\centering
	\subfloat[d=5, e=1, v=1.]
	{
		\begin{minipage}[t]{0.48\linewidth}
			\centering
			\includegraphics[width=2.75in]{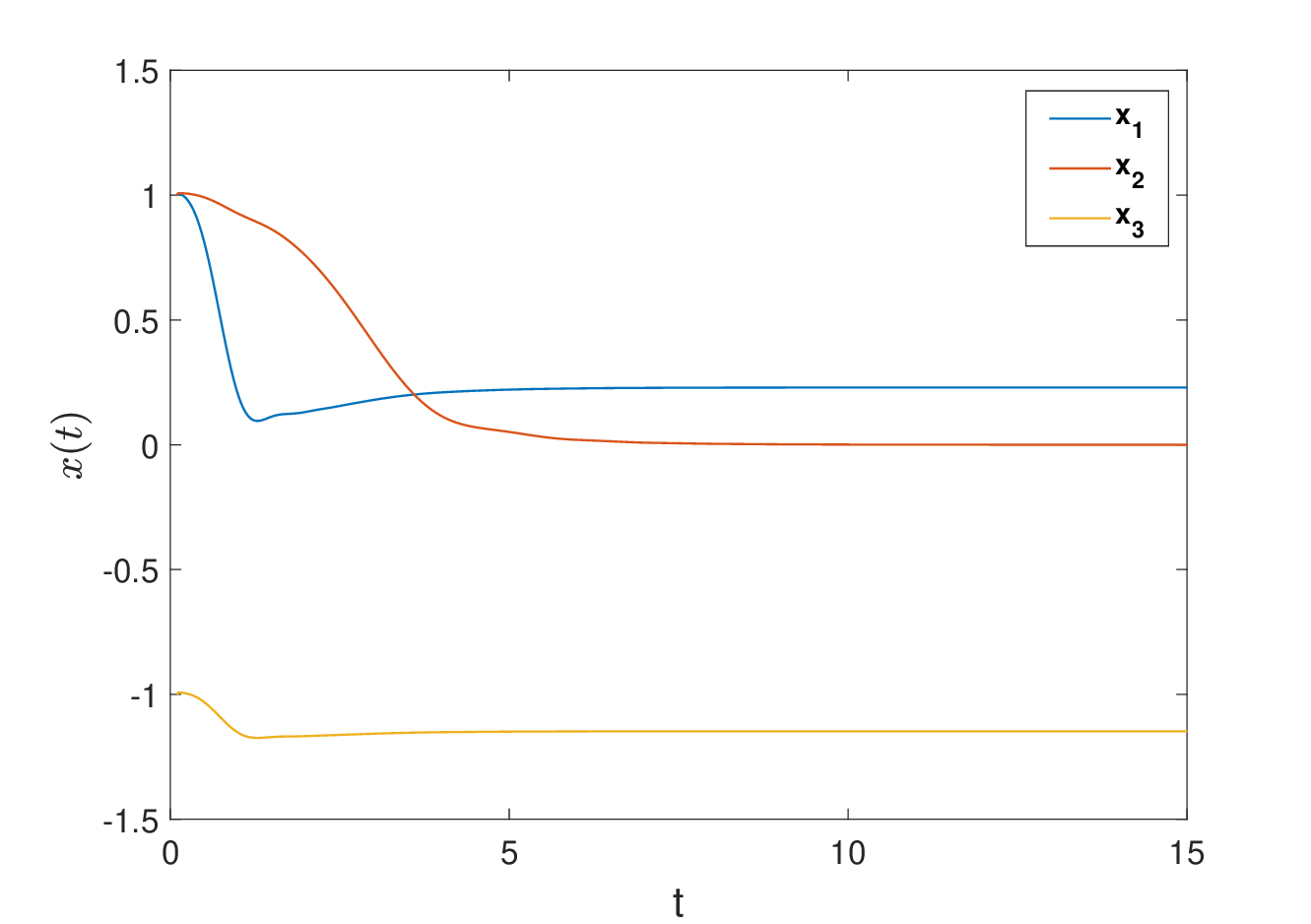}
		\end{minipage}%
	}
	\subfloat[d=120, e=5, v=25.]
	{
		\begin{minipage}[t]{0.48\linewidth}
			\centering
			\includegraphics[width=2.75in]{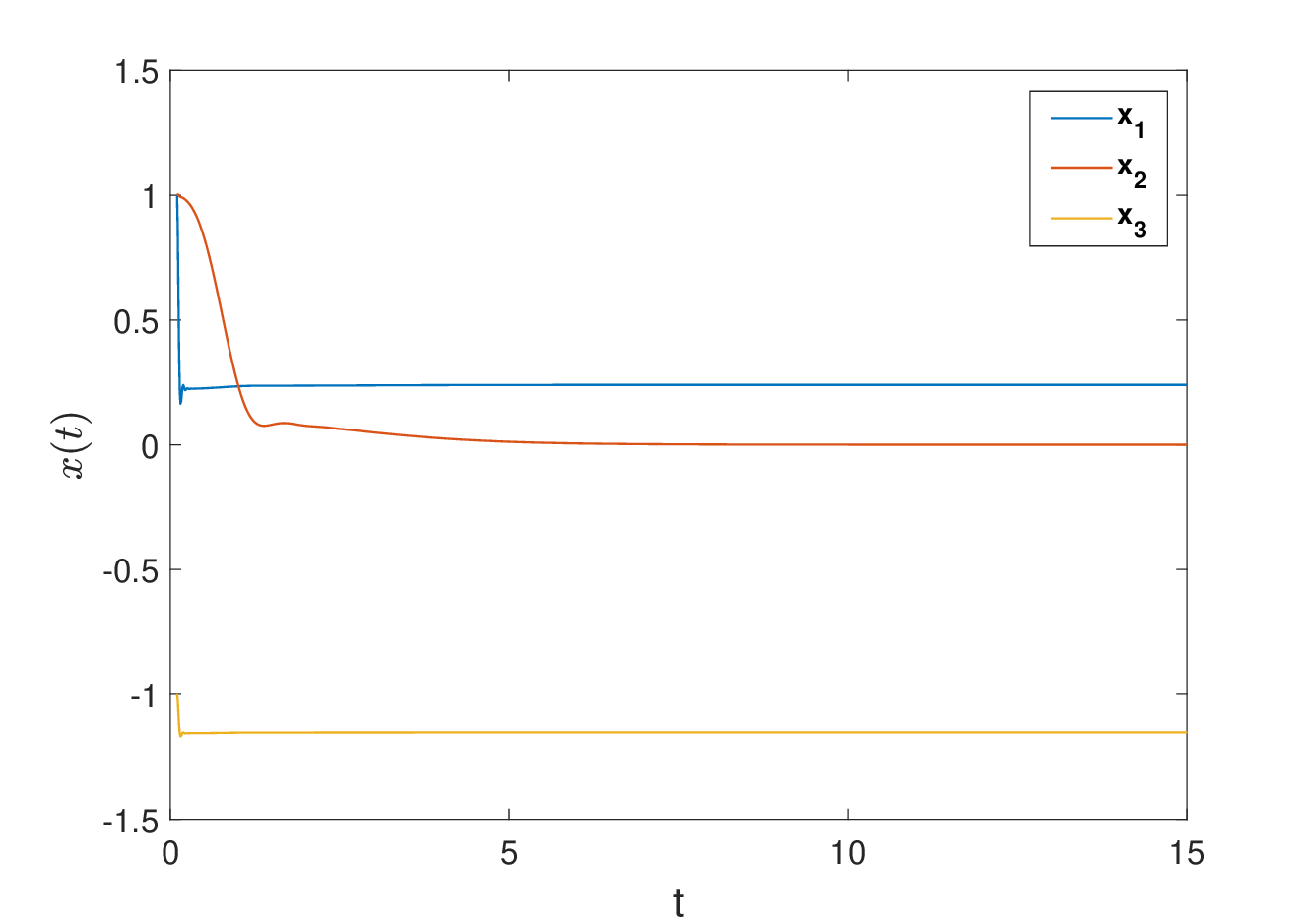}
		\end{minipage}%
	}
	\caption{The behavior of $(x(t))_{t\geq t_0}$ of $\text{(HN-AVD)}$ in Example \ref{exam2}}
	\label{fig:titstfigst}
	\centering
\end{figure*}

\appendix\section{Some auxiliary results}\label{append}

\begin{lemma}\label{lemma2.1.1}\cite{XuWen2021}
	Suppose that $\theta>0$ and $z\in \mathcal{X}$. Let $s: [t_0, +\infty)\rightarrow \mathcal{X}$ be a continuously differentiable function. If there exists $\widetilde{C}>0$ such that
	$$\frac{1}{2}\|s(t)-z+\theta t \dot{s}(t)\|^2\leq \widetilde{C}, \quad \forall t\geq t_0,$$
	then $(s(t))_{t\geq t_0}$ is bounded.
\end{lemma}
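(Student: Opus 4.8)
The plan is to recover a pointwise bound on $s(t)$ from the hypothesis by viewing the quantity $s(t)-z+\theta t\dot{s}(t)$ as the image of $s$ under a first-order differential operator, and then to invert that operator via an integrating factor. First I would set $M:=\sqrt{2\widetilde{C}}$, so that the hypothesis reads $\|s(t)-z+\theta t\dot{s}(t)\|\le M$ for all $t\ge t_0$, and introduce $g(t):=s(t)-z$, which is continuously differentiable with $\dot{g}=\dot{s}$ and satisfies $\|g(t)+\theta t\dot{g}(t)\|\le M$. Since $g$ is bounded if and only if $s$ is, it suffices to bound $g$.

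The key step is the observation that $t^{1/\theta}$ is an integrating factor for the operator $g\mapsto g+\theta t\dot{g}$: a direct computation gives
\[
\frac{d}{dt}\Bigl(t^{1/\theta}g(t)\Bigr)=\frac{1}{\theta}\,t^{1/\theta-1}\bigl(g(t)+\theta t\dot{g}(t)\bigr),\qquad\forall t\ge t_0,
\]
where I have used $t^{1/\theta}/t^{1/\theta-1}=t$. Taking norms and invoking $\|g(t)+\theta t\dot{g}(t)\|\le M$ then yields $\bigl\|\frac{d}{dt}(t^{1/\theta}g(t))\bigr\|\le \frac{M}{\theta}\,t^{1/\theta-1}$.

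Next I would integrate this differential inequality from $t_0$ to $t$, using the fundamental theorem of calculus in $\mathcal{X}$ together with $\int_{t_0}^{t}\frac{M}{\theta}s^{1/\theta-1}\,ds=M(t^{1/\theta}-t_0^{1/\theta})$, to obtain $\|t^{1/\theta}g(t)\|\le t_0^{1/\theta}\|g(t_0)\|+M\,t^{1/\theta}$. Dividing both sides by $t^{1/\theta}$ and using that $(t_0/t)^{1/\theta}\le 1$ for $t\ge t_0$ (valid because $\theta>0$), this produces the uniform bound $\|g(t)\|\le\|g(t_0)\|+M$, i.e.\ $\|s(t)-z\|\le\|s(t_0)-z\|+\sqrt{2\widetilde{C}}$ for all $t\ge t_0$, which is exactly the asserted boundedness of $(s(t))_{t\ge t_0}$.

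I do not expect a genuine obstacle in this argument; the only points requiring a little care are the correct choice of the exponent $1/\theta$ in the integrating factor and the justification that the norm of the vector-valued derivative may be bounded and integrated in the Hilbert-space setting. Should one prefer to avoid the integrating factor, an alternative is to work with $\phi(t)=\tfrac12\|g(t)\|^2$ and use the Cauchy--Schwarz inequality to derive $2\phi(t)+\theta t\dot{\phi}(t)\le M\sqrt{2\phi(t)}$, but the integrating-factor route is cleaner and delivers the explicit constant directly.
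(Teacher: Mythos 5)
Your proof is correct. The paper does not prove this lemma at all---it is stated in the appendix as a known result imported from the cited reference \cite{XuWen2021}---so there is no in-paper argument to compare against; your integrating-factor computation, namely
$\frac{d}{dt}\bigl(t^{1/\theta}(s(t)-z)\bigr)=\frac{1}{\theta}t^{1/\theta-1}\bigl(s(t)-z+\theta t\dot{s}(t)\bigr)$,
followed by integration and division by $t^{1/\theta}$, is precisely the standard argument used in that literature (note it implicitly uses $t_0>0$, which holds throughout the paper's setting), and it yields the clean explicit bound $\|s(t)-z\|\leq\|s(t_0)-z\|+\sqrt{2\widetilde{C}}$.
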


\begin{lemma}\label{lemma2.1.2}\cite[Lemma 6]{HeHFiietal(2022)}
	Suppose that  $g:[t_{0},+\infty)\rightarrow\mathbb{R}^n$  is a continuously differentiable function and that $a : [t_{0},+\infty)\rightarrow[0,+\infty)$ is a continuously differentiable function, where $t_{0}>0$.  If there exists  a constant $C\geq0$ such that
	$$\|g(t)+\int_{t_{0}}^{t}a(s)g(s)ds\|\leq C, \quad \forall t\geq t_{0},$$
	then
	$$\sup_{t\geq t_{0}}\|g(t)\|<+\infty.$$
\end{lemma}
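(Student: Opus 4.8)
The plan is to convert the integral inequality into a linear first-order differential equation for an auxiliary function and then solve it explicitly by variation of parameters. First I would introduce $G(t):=\int_{t_0}^{t}a(s)g(s)\,ds$, the accumulated integral appearing in the hypothesis. By construction $G$ is continuously differentiable with $G(t_0)=0$ and $G'(t)=a(t)g(t)$, and the standing assumption reads simply $\|g(t)+G(t)\|\le C$. Setting $h(t):=g(t)+G(t)$, so that $\|h(t)\|\le C$ for all $t\ge t_0$, I would rewrite $g(t)=h(t)-G(t)$ and substitute into $G'(t)=a(t)g(t)$ to obtain the linear ODE
\begin{equation*}
G'(t)+a(t)G(t)=a(t)h(t),\qquad G(t_0)=0.
\end{equation*}

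Next I would solve this equation using the integrating factor $e^{A(t)}$, where $A(t):=\int_{t_0}^{t}a(s)\,ds$. Since $\tfrac{d}{dt}\bigl(e^{A(t)}G(t)\bigr)=e^{A(t)}\bigl(G'(t)+a(t)G(t)\bigr)=e^{A(t)}a(t)h(t)$, integrating over $[t_0,t]$ and using $G(t_0)=0$, $A(t_0)=0$ yields the explicit representation $e^{A(t)}G(t)=\int_{t_0}^{t}e^{A(s)}a(s)h(s)\,ds$, hence $G(t)=e^{-A(t)}\int_{t_0}^{t}e^{A(s)}a(s)h(s)\,ds$.

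The estimate then follows from a telescoping trick. Taking norms and using $\|h(s)\|\le C$ gives $\|G(t)\|\le C\,e^{-A(t)}\int_{t_0}^{t}e^{A(s)}a(s)\,ds$. Because $a\ge 0$, the integrand is exactly $\tfrac{d}{ds}e^{A(s)}$, so $\int_{t_0}^{t}e^{A(s)}a(s)\,ds=e^{A(t)}-1$, and therefore $\|G(t)\|\le C\bigl(1-e^{-A(t)}\bigr)\le C$, where the final inequality uses $A(t)\ge 0$. Finally, from $g(t)=h(t)-G(t)$ I conclude $\|g(t)\|\le\|h(t)\|+\|G(t)\|\le 2C$ for every $t\ge t_0$, which gives $\sup_{t\ge t_0}\|g(t)\|\le 2C<+\infty$.

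I do not anticipate a serious obstacle here; the only point requiring a little care is the recognition that the hypothesis should be read as a linear ODE in the primitive $G$ rather than in $g$ itself, after which the sign condition $a\ge 0$ does the rest by forcing $A(t)\ge 0$ and hence $e^{-A(t)}\le 1$. The argument is robust to $a$ vanishing on subintervals, since only the monotonicity $A(t)\ge 0$ is used, and no lower bound on $a$ is ever needed.
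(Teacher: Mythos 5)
Your proof is correct. Note that the paper does not prove this statement at all --- it imports it verbatim from \cite[Lemma 6]{HeHFiietal(2022)} as an auxiliary result --- and your argument (setting $G(t)=\int_{t_0}^{t}a(s)g(s)\,ds$, recognizing the hypothesis as the linear ODE $G'+aG=ah$ with $\|h\|\le C$, solving by the integrating factor $e^{A(t)}$, and concluding $\|G(t)\|\le C\bigl(1-e^{-A(t)}\bigr)\le C$, hence $\|g(t)\|\le 2C$) is exactly the standard integrating-factor proof of that cited lemma, so nothing needs to be added or changed.
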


\begin{lemma}\label{lemma2.1.3}\cite[Lemma A.1]{BNguyen2022}
	Assume that $0<\delta\leq p\leq+\infty$ and $K: [\delta, +\infty)\rightarrow[0, +\infty)$ is a continuous function. For any $\alpha>1$, it holds 
	$$\int_{\delta}^{p}\frac{1}{t^{\alpha}}\left(\int_{\delta}^{t}s^{\alpha-1}K(s)ds\right)dt\leq\frac{1}{\alpha-1}\int_{\delta}^{p}K(t)dt.$$
	If $p=+\infty$, then equality holds.
\end{lemma}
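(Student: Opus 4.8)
The plan is to recognize this as a Fubini--Tonelli exchange of the order of integration followed by an elementary single-variable integration. The double integral on the left-hand side is taken over the triangular region
$$\Delta=\{(s,t): \delta\le s\le t\le p\},$$
since the inner integral runs over $s\in[\delta,t]$ while the outer runs over $t\in[\delta,p]$. Because $K\ge 0$ and the weights $s^{\alpha-1}$ and $t^{-\alpha}$ are strictly positive on the relevant range, the integrand $s^{\alpha-1}t^{-\alpha}K(s)$ is nonnegative throughout $\Delta$, so Tonelli's theorem applies and permits swapping the order of integration without any integrability hypothesis beyond continuity of $K$. This is the one structural step of the argument; everything else is computation.

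After swapping, I would write
$$\int_{\delta}^{p}\frac{1}{t^{\alpha}}\left(\int_{\delta}^{t}s^{\alpha-1}K(s)\,ds\right)dt
=\int_{\delta}^{p}s^{\alpha-1}K(s)\left(\int_{s}^{p}\frac{1}{t^{\alpha}}\,dt\right)ds,$$
where the new inner integral now sweeps $t$ from $s$ up to $p$ for each fixed $s$. The elementary evaluation gives, using $\alpha>1$,
$$\int_{s}^{p}\frac{1}{t^{\alpha}}\,dt=\frac{s^{1-\alpha}-p^{1-\alpha}}{\alpha-1}.$$
Since $\alpha>1$ forces $1-\alpha<0$, the term $p^{1-\alpha}$ is nonnegative (and equals $0$ when $p=+\infty$), so that $\int_{s}^{p}t^{-\alpha}\,dt\le \frac{1}{\alpha-1}s^{1-\alpha}$.

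Substituting this bound back and cancelling $s^{\alpha-1}\cdot s^{1-\alpha}=1$ yields
$$\int_{\delta}^{p}\frac{1}{t^{\alpha}}\left(\int_{\delta}^{t}s^{\alpha-1}K(s)\,ds\right)dt
\le\frac{1}{\alpha-1}\int_{\delta}^{p}K(s)\,ds,$$
which is the claimed inequality. For the equality assertion, I would simply observe that when $p=+\infty$ one has $p^{1-\alpha}=0$, so the inequality $\int_{s}^{+\infty}t^{-\alpha}\,dt\le\frac{1}{\alpha-1}s^{1-\alpha}$ becomes an exact identity and the final bound collapses to an equality. I do not anticipate a genuine obstacle here: the only point requiring care is confirming the description of the region $\Delta$ and the new $t$-limits $[s,p]$ after the exchange, and verifying that the nonnegativity of $K$ legitimately invokes Tonelli so that no convergence of the iterated integrals need be assumed in advance.
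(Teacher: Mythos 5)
Your proof is correct, and it is essentially the canonical argument: the paper states this lemma without proof, citing \cite[Lemma A.1]{BNguyen2022}, whose proof is precisely this Tonelli exchange over the triangle $\{\delta\le s\le t\le p\}$ followed by the evaluation $\int_{s}^{p}t^{-\alpha}\,dt=\frac{s^{1-\alpha}-p^{1-\alpha}}{\alpha-1}$ and the observation that $p^{1-\alpha}\ge 0$ with $p^{1-\alpha}=0$ when $p=+\infty$. Your handling of the two points that need care (nonnegativity licensing Tonelli without prior integrability assumptions, and the new $t$-limits $[s,p]$ after the swap) is accurate, so there is nothing to add.
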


\begin{lemma}\label{lemma2.1.5}\cite[Lemma 5.2]{AbbasAttouchS2014}
	Let $\delta>0$, $1\leq c<+\infty$ and $1\leq d\leq+\infty$. Assume that $F\in \mathbb{L}^c([\delta, +\infty))$ is a locally absolutely continuous nonnegative function, $V\in \mathbb{L}^d([\delta, +\infty))$ and for almost every $t\geq \delta$
	$$\frac{d}{dt}F(t)\leq V(t).$$
	Then $\lim_{t\rightarrow+\infty}F(t)=0.$
\end{lemma}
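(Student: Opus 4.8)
The plan is to argue by contradiction, exploiting the tension between the integrability $F\in\mathbb{L}^c$ (which forces $F$ to be small on a large set) and the one-sided differential bound $\dot F\le V$ (which limits how fast $F$ can climb). Suppose the conclusion $\lim_{t\to+\infty}F(t)=0$ fails; since $F\ge 0$ this means $\ell:=\limsup_{t\to+\infty}F(t)>0$. Fix $\eta$ with $0<\eta<\tfrac{2}{3}\ell$ (any $\eta>0$ if $\ell=+\infty$) and set $E:=\{t\ge\delta:F(t)>\eta\}$. The goal is to show that $E$ carries both a finite amount of $|V|$ and an infinite amount of it, which is impossible.

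First I would record two consequences of $F\in\mathbb{L}^c$ with $F\ge 0$ and $c<+\infty$. On one hand $\liminf_{t\to+\infty}F(t)=0$: otherwise $F$ would be bounded below by a positive constant on a half-line, forcing $\int^{+\infty}F^c=+\infty$. On the other hand $E$ has finite measure, because $\eta^c\mu(E)\le\int_E F^c\le\int_\delta^{+\infty}F^c<+\infty$. Combining finiteness of $\mu(E)$ with $V\in\mathbb{L}^d$ and H\"older's inequality then yields $\int_E|V|<+\infty$: for $d=1$ this is immediate, for $d=+\infty$ one bounds $\int_E|V|\le\|V\|_{\infty}\,\mu(E)$, and for $1<d<+\infty$ one uses $\int_E|V|\le\|V\|_{\mathbb{L}^d}\,\mu(E)^{1-1/d}$.

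Next I would manufacture the contradiction by producing infinitely many pairwise disjoint intervals inside $E$ on each of which $\int|V|$ is bounded below by a fixed positive amount. Since $\limsup F=\ell>\tfrac32\eta$, there are times $t_n\to+\infty$ with $F(t_n)>\tfrac32\eta$; since $\liminf F=0$, between successive such times $F$ dips to values $\le\eta$, so setting $s_n:=\sup\{s\le t_n:F(s)\le\eta\}$ gives, by continuity of $F$, $F(s_n)=\eta$ and $F>\eta$ on $(s_n,t_n]$, i.e.\ $(s_n,t_n)\subseteq E$. Passing to a subsequence (choosing each new high time to occur after a dip following the previous interval) I may assume the intervals $(s_n,t_n)$ are pairwise disjoint. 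On each of them the one-sided bound together with the local absolute continuity of $F$ gives
\[
\tfrac{\eta}{2}\le F(t_n)-F(s_n)=\int_{s_n}^{t_n}\dot F(\tau)\,d\tau\le\int_{s_n}^{t_n}V(\tau)\,d\tau\le\int_{s_n}^{t_n}|V(\tau)|\,d\tau .
\]
Summing over these disjoint subintervals of $E$ forces $\int_E|V|\ge\sum_n\tfrac{\eta}{2}=+\infty$, contradicting the estimate of the previous paragraph. Hence $\ell=0$, and with $F\ge 0$ this yields $\lim_{t\to+\infty}F(t)=0$.

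The integrability consequences and the H\"older estimate are routine; the step demanding the most care is the construction of the pairwise disjoint intervals $(s_n,t_n)\subseteq E$. Here one must combine $\liminf F=0$ with $\limsup F>\tfrac32\eta$ to guarantee that after each interval $F$ genuinely returns below $\eta$ before climbing high again, so that a disjoint subfamily can be extracted rather than a family of nested or overlapping intervals; this is precisely the place where both integrability ($\limsup>0$ gives the peaks, $F\in\mathbb{L}^c$ gives the troughs) and the one-sided derivative bound are used simultaneously.
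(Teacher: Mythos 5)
The paper does not prove this lemma at all---it is quoted as an auxiliary result from \cite[Lemma 5.2]{AbbasAttouchS2014}---and your blind argument is correct and complete: the Chebyshev bound $\eta^{c}\mu(E)\le\int_{\delta}^{+\infty}F^{c}<+\infty$, the three-way H\"older estimate giving $\int_{E}|V|<+\infty$, the interleaved construction (peaks from $\limsup F>\tfrac{3}{2}\eta$, dips from $\liminf F=0$) of pairwise disjoint intervals $(s_n,t_n)\subseteq E$ with $F(s_n)=\eta$ and $F(t_n)>\tfrac{3}{2}\eta$, and the locally-absolutely-continuous fundamental theorem of calculus step $\tfrac{\eta}{2}\le\int_{s_n}^{t_n}\dot F\le\int_{s_n}^{t_n}|V|$ all hold up. This is essentially the same mechanism as the proof in the cited source (finite measure of the superlevel set of an $\mathbb{L}^{c}$ function played against a uniform lower bound on $\int V$ over disjoint climbing intervals inside it), so nothing further is needed.
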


\begin{lemma}\label{lemma2.1.6}\cite{Opialweak1967}
	Let $S$ be a nonempty subset of $\mathcal{X}$ and $y: [t_0, +\infty)\rightarrow \mathcal{X}$. Suppose that
	\begin{itemize}
		\item[(i)] for any $y^*\in S$, $\lim_{t\rightarrow+\infty}\|y(t)-y^*\|$ exists;
		\item[(ii)] every weak sequential cluster point of the trajectory $y(t)$ as $t\rightarrow+\infty$ belongs to $S$.
	\end{itemize}
	Then, the trajectory $x(t)$ converges weakly to a point in $S$ as $t\rightarrow+\infty$.
\end{lemma}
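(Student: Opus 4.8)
The plan is to analyze strong convergence through an auxiliary energy functional anchored at the minimal norm primal-dual pair, and then to split the argument according to the position of the trajectory relative to the sphere of radius $\|\bar{x}^*\|$. The starting observation is that the conclusion is about the \emph{primal} trajectory only, so I would fix a dual solution $\bar{\lambda}^*$ with $(\bar{x}^*,\bar{\lambda}^*)\in\Omega$ and exploit the strong convexity of the Tikhonov-regularized Lagrangian $\mathcal{L}_{\epsilon(t)}$ together with the slow-decay hypothesis \eqref{Slow}.

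First I would introduce the regularized energy $\widetilde{\mathcal{E}}(t)$ obtained from $\mathcal{E}(t)/(\theta^2 t^2)$ by replacing the term $\frac{\epsilon(t)}{2}\|x(t)\|^2$ with $\mathcal{L}_{\epsilon(t)}(x(t))-\mathcal{L}_{\epsilon(t)}(\bar{x}^*)$; these two expressions differ only by the additive constant $\frac{\beta(t)\epsilon(t)}{2}\|\bar{x}^*\|^2$, so $\widetilde{\mathcal{E}}(t)=\mathcal{E}(t)/(\theta^2t^2)-\frac{\beta(t)\epsilon(t)}{2}\|\bar{x}^*\|^2$. Differentiating and using the derivative estimate \eqref{zr2} together with the sign conditions \eqref{zs7}, I obtain a differential inequality of the form $\dot{\widetilde{\mathcal{E}}}(t)+\frac{2}{t}\widetilde{\mathcal{E}}(t)\leq c(t)(\|x(t)\|^2-\|\bar{x}^*\|^2)$, where the coefficient $c(t)$ is nonpositive by Assumption \ref{AS-F} and \eqref{S-C}. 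This inequality is the engine for the case analysis, because its sign depends entirely on whether $\|x(t)\|$ exceeds $\|\bar{x}^*\|$.

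The core of the proof is then a trichotomy. When the trajectory eventually lies outside the ball $B(0,\|\bar{x}^*\|)$ (Case I), the right side is nonpositive, so $\frac{d}{dt}(t^2\widetilde{\mathcal{E}}(t))\leq0$ and hence $\widetilde{\mathcal{E}}(t)=\mathcal{O}(1/t^2)$. Feeding this into the lower bound of Lemma \ref{lemma 5.1} and dividing by $\beta(t)\epsilon(t)$ converts the decay into $\|x(t)-x_{\epsilon(t)}\|^2\leq \frac{2T^2\widetilde{\mathcal{E}}(T)}{t^2\beta(t)\epsilon(t)}+\|\bar{x}^*\|^2-\|x_{\epsilon(t)}\|^2$; the slow-decay hypothesis \eqref{Slow} kills the first term while \eqref{zf3} kills the remainder, giving $x(t)\to\bar{x}^*$. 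When the trajectory eventually stays inside the ball (Case II), I would abandon the energy estimate and run a weak-cluster-point argument: by weak lower semicontinuity of $\mathcal{L}_\rho(\cdot,\bar{\lambda}^*)$ and the asymptotic vanishing of $\mathcal{L}_\rho(x(t),\bar{\lambda}^*)-\mathcal{L}_\rho(\bar{x}^*,\bar{\lambda}^*)$ from Theorem \ref{ztt3.2}, every weak cluster point $\bar{x}$ minimizes $\mathcal{L}_\rho(\cdot,\bar{\lambda}^*)$ and hence lies in $S$ by \eqref{zttonefang11}; combining $\|\bar{x}\|\leq\|\bar{x}^*\|$ with $\bar{x}^*=\mathrm{Proj}_S 0$ forces $\bar{x}=\bar{x}^*$, so the cluster point is unique and $x(t)\rightharpoonup\bar{x}^*$. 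The squeeze $\|\bar{x}^*\|\leq\liminf\|x(t)\|\leq\limsup\|x(t)\|\leq\|\bar{x}^*\|$ upgrades this to norm convergence, hence to strong convergence. The residual oscillatory Case III, in which no such $T$ exists, produces a sequence $t_n\to+\infty$ with $\|x(t_n)\|=\|\bar{x}^*\|$; repeating the Case II argument along this sequence yields $\|x(t_n)-\bar{x}^*\|\to0$, which is exactly $\liminf_{t\to+\infty}\|x(t)-\bar{x}^*\|=0$.

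I expect the delicate step to be Case I, where the entire mechanism rests on the compatibility of \eqref{Slow} with the $\mathcal{O}(1/t^2)$ decay of $\widetilde{\mathcal{E}}$: the quotient $\frac{1}{t^2\beta(t)\epsilon(t)}$ must vanish, which is precisely \eqref{Slow}, so this is where the hypotheses are used most sharply. A secondary subtlety is checking that after differentiation the extra constant $\frac{\beta(t)\epsilon(t)}{2}\|\bar{x}^*\|^2$ separating $\mathcal{E}/(\theta^2t^2)$ from $\widetilde{\mathcal{E}}$ contributes with the correct sign, so that $c(t)\leq 0$ survives; this relies on $\dot\epsilon\leq 0$ and the scaling condition \eqref{S-C}. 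Finally, the conditional second assertion of the theorem corresponds to assuming that Case I or Case II holds (excluding the oscillatory Case III), under which the conclusion strengthens from $\liminf=0$ to $\lim=0$.
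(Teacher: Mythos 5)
Your proposal does not prove the statement at hand. The statement is Opial's Lemma (the continuous-time version, Lemma \ref{lemma2.1.6} in the appendix): an abstract Hilbert-space fact about an \emph{arbitrary} map $y:[t_0,+\infty)\to\mathcal{X}$ and an \emph{arbitrary} nonempty set $S$, asserting weak convergence of $y(t)$ to a point of $S$ from the two hypotheses (i) and (ii). What you have written instead is, almost verbatim, the paper's proof of Theorem \ref{theoremztt5.1} — the strong convergence of the primal trajectory of System \eqref{z2} to the minimal norm solution under the slow-decay condition \eqref{Slow} — complete with the energy $\widetilde{\mathcal{E}}(t)$, Lemma \ref{lemma 5.1}, and the three-case analysis on the position of $x(t)$ relative to the ball $B(0,\|\bar{x}^*\|)$. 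None of the objects you invoke (System \eqref{z2}, $\beta(t)$, $\epsilon(t)$, $\mathcal{L}_{\epsilon(t)}$, the conditions \eqref{S-C} and \eqref{Slow}) appear in the lemma's hypotheses, so your argument establishes nothing about the stated lemma; it addresses a different (and much more specific) result. Note also that Opial's Lemma is what the paper actually \emph{uses} in Theorem \ref{theorem4.1.6} to get weak convergence of $(x(t),\lambda(t))$, so conflating it with the strong-convergence theorem of Section 5 inverts the logical dependencies.

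The proof you should have given is short and self-contained. From (i) with any single $y^*\in S$, the function $t\mapsto\|y(t)-y^*\|$ is bounded, so $(y(t))_{t\ge t_0}$ is bounded and, $\mathcal{X}$ being a Hilbert space, every sequence $t_n\to+\infty$ admits a weakly convergent subsequence; hence weak sequential cluster points exist, and by (ii) they all lie in $S$. It remains to show uniqueness. Let $\bar{y}_1,\bar{y}_2\in S$ be two weak cluster points, say $y(t_n)\rightharpoonup\bar{y}_1$ and $y(s_n)\rightharpoonup\bar{y}_2$. By (i) both limits $\lim_{t\to+\infty}\|y(t)-\bar{y}_i\|$ exist, so the limit
\begin{equation*}
\ell:=\lim_{t\to+\infty}\left(\|y(t)-\bar{y}_1\|^2-\|y(t)-\bar{y}_2\|^2\right)
=\lim_{t\to+\infty}\left(\|\bar{y}_1\|^2-\|\bar{y}_2\|^2-2\langle y(t),\,\bar{y}_1-\bar{y}_2\rangle\right)
\end{equation*}
exists, and therefore $\lim_{t\to+\infty}\langle y(t),\bar{y}_1-\bar{y}_2\rangle$ exists. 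Evaluating this limit along $(t_n)$ and along $(s_n)$ gives $\langle\bar{y}_1,\bar{y}_1-\bar{y}_2\rangle=\langle\bar{y}_2,\bar{y}_1-\bar{y}_2\rangle$, i.e.\ $\|\bar{y}_1-\bar{y}_2\|^2=0$, so $\bar{y}_1=\bar{y}_2$. A bounded trajectory with a unique weak sequential cluster point converges weakly to that point, which lies in $S$; this is the entire content of the lemma, and no dynamical-system structure is needed or available.
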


\begin{lemma}\label{lemma2.2.7}\cite[Lemma A.3]{AttouchZH2018}
	Suppose that $\delta>0$, $\phi\in L^{1}([\delta, +\infty))$ is a nonnegative and continuous function, and $\psi :[\delta, +\infty)\rightarrow(0, +\infty)$ is a nondecreasing function such that $\lim_{t\rightarrow+\infty}\psi(t)=+\infty$. Then,
	$$\lim_{t\rightarrow+\infty}\frac{1}{\psi(t)}\int_{\delta}^{t}\psi(s)\phi(s)ds=0.$$
\end{lemma}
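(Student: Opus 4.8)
The plan is to exploit the integrability of $\phi$ to control the tail of the integral, while the positivity and divergence of $\psi$ kill the remaining head. The single structural fact that makes everything work is that $\psi$ is nondecreasing, which lets me bound $\psi(s)$ by $\psi(t)$ on any interval $[T,t]$ and thereby convert the weighted tail integral into an unweighted one.

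First I would fix an arbitrary $\varepsilon>0$. Since $\phi\in L^{1}([\delta,+\infty))$, there exists $T\geq\delta$ such that $\int_{T}^{+\infty}\phi(s)ds<\varepsilon$. I then split the averaged integral at $T$:
$$\frac{1}{\psi(t)}\int_{\delta}^{t}\psi(s)\phi(s)ds=\frac{1}{\psi(t)}\int_{\delta}^{T}\psi(s)\phi(s)ds+\frac{1}{\psi(t)}\int_{T}^{t}\psi(s)\phi(s)ds,$$
which is valid for all $t\geq T$. For the tail (second) term I invoke monotonicity: for $s\in[T,t]$ one has $\psi(s)\leq\psi(t)$, so after cancelling $\psi(t)$ and using $\phi\geq0$,
$$\frac{1}{\psi(t)}\int_{T}^{t}\psi(s)\phi(s)ds\leq\int_{T}^{t}\phi(s)ds\leq\int_{T}^{+\infty}\phi(s)ds<\varepsilon,$$
and crucially this bound holds uniformly in $t\geq T$. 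For the head (first) term, the quantity $C:=\int_{\delta}^{T}\psi(s)\phi(s)ds$ is a fixed finite constant, finite because $\psi$ is bounded on the compact interval $[\delta,T]$ (being monotone, hence bounded by $\psi(T)$) while $\phi$ is integrable there; since $\lim_{t\to+\infty}\psi(t)=+\infty$, there is $t_{1}\geq T$ with $C/\psi(t)<\varepsilon$ for all $t\geq t_{1}$.

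Combining the two estimates yields $\frac{1}{\psi(t)}\int_{\delta}^{t}\psi(s)\phi(s)ds<2\varepsilon$ for every $t\geq t_{1}$, and since $\varepsilon>0$ was arbitrary the limit is $0$. There is no genuine obstacle in this argument; the only points requiring care are to make the tail bound \emph{uniform} in $t$ (which is precisely what the monotonicity of $\psi$ delivers, and which is why neither continuity of $\phi$ nor any quantitative decay of $\psi$ is needed) and to record that the head integral $C$ is truly finite so that dividing by $\psi(t)\to+\infty$ forces it to vanish.
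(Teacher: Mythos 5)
Your proof is correct and is exactly the standard argument: since the paper cites this lemma from Attouch, Chbani and Riahi \cite[Lemma A.3]{AttouchZH2018} without reproducing a proof, the relevant comparison is with the cited source, whose proof uses the same split at a tail threshold $T$, the monotonicity bound $\psi(s)\leq\psi(t)$ to control the weighted tail uniformly, and $\psi(t)\to+\infty$ to annihilate the fixed head integral. No gaps; your remarks on the uniformity of the tail estimate and the finiteness of the head constant are precisely the points that make the argument airtight.
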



\begin{thebibliography}{}
\bibitem{GoldsteinTandDonoghue(2014)}
Goldstein T, O'Donoghue B, Setzer S, Baraniuk, R. Fast alternating direction optimization methods. SIAM Journal on Imaging Sciences. 2014;7(3):1588-1623.	
	
\bibitem{ZLinandLiHandFang(2020)}
Lin ZC, Li H, Fang C. Accelerated optimization for machine learning. Nature Singapore: Springer; 2020.

\bibitem{ZengXLandLeiJLandChenJ(2022)}
Zeng XL, Lei J, Chen J. Dynamical primal-dual Nesterov accelerated method and its application to network optimization. IEEE Transactions on Automatic Control. 2023;68(3):1760-1767.	

\bibitem{PYiandHongYandLiu(2015)}
Yi P, Hong YG, Liu F. Distributed gradient algorithm for constrained optimization with application to load sharing in power systems. Systems \& Control Letters. 2015;83:45-52.


\bibitem{FortinandGlowinski(1983)}
Fortin M,  Glowinski R. Augmented Lagrangian methods: Applications to the numerical solution of boundary-value problems. Elsevier; 1983.	

\bibitem{zhuhufang1}	
Zhu TT, Hu R, Fang YP. Tikhonov regularized second-order plus first-order primal-dual dynamical systems with asymptotically vanishing damping for linear equality constrained convex optimization problems. Preprint (2023) at http://arxiv.org/abs/2307.03612v2.


\bibitem{AttouchandCzarnecki(2002)}
Attouch H, Czarnecki MO. Asymptotic control and stabilization of nonlinear oscillators with non-isolated equilibria. Journal of Differential Equations. 2002;179(1):278-310.

%\bibitem{AttouchandCzarnecki(2017)}
%Attouch H, Czarnecki MO. Asymptotic behavior of gradient-like dynamical systems involving inertia and multiscale aspects. Journal of Differential Equations. 2017;262(3):2745-2770.

%\bibitem{Cabot(2004)}
%Cabot A. Inertial gradient-like dynamical system controlled by a stabilizing term. Journal of Optimization Theory and Applications. 2004;120(2):275-303.

%\bibitem{JendoubiandMay(2010)}
%Jendoubi MA, May R. On an asymptotically autonomous system with Tikhonov type regularizing term. Archiv der Mathematik. 2010;95(4):389-399.

\bibitem{AttouchZH2018}
Attouch H, Chbani Z, Riahi H. Combining fast inertial dynamics for convex optimization with Tikhonov regularization. Journal of Mathematical Analysis and Applications. 2018;457(2):1065-1094.

\bibitem{AttouchandSzilardLaszlo(2021)}
Attouch H, L{\'a}szl{\'o} SC. Convex optimization via inertial algorithms with vanishing Tikhonov regularization: Fast convergence to the minimum norm solution. 2021. arXiv:2104.11987.

\bibitem{Laszlo-JDE}	
L{\'a}szl{\'o} SC. On the strong convergence of the trajectories of a Tikhonov regularized second order dynamical system with asymptotically vanishing damping.
Journal of Differential Equations. 2023;362:355-381.

\bibitem{XuWen2021}
Xu B, Wen B. On the convergence of a class of inertial dynamical systems with Tikhonov regularization. Optimization Letters. 2021;15:2025-2052.

\bibitem{Polyak(1964)}
Polyak BT. Some methods of speeding up the convergence of iteration methods. USSR Computational Mathematics and Mathematical Physics. 1964;4(5):1-17.



\bibitem{Nesterov(1983)}
Nesterov Y. A method of solving a convex programming problem with convergence rate $\mathcal{O}(\frac{1}{k^2})$. Dokl Akad Nauk SSSR. 1983;269(3):543-547.

\bibitem{Nesterov(2013)}
Nesterov Y. Gradient methods for minimizing composite functions. Mathematical Programming. 2013;140(1):125-161.

\bibitem{SuBoydandCandes(2016)}
Su WJ, Boyd S, Cand{\`e}s E. A differential equation for modeling Nesterov’s accelerated gradient method: theory and insights. The Journal of Machine Learning Research. 2016;17:5312-5354.

\bibitem{Attouch2018MP}
Attouch H, Chbani Z, Peypouquet J, Redont P. Fast convergence of inertial dynamics and algorithms with asymptotic vanishing viscosity. Mathematical Programming. 2018;168(1-2):123-175.

\bibitem{Wilson2021}
Wilson AC, Recht B, Jordan MI. A Lyapunov analysis of accelerated methods in optimization. The Journal of Machine Learning Research. 2021;22(1):5040-5073.

\bibitem{ShiBMP}
Shi B, Du SS, Jordan MI, Su WJ. Understanding the acceleration phenomenon via high-resolution differential equations. Mathematical Programming. 2022;195:79-148.

%\bibitem{CabotFrank2012}
%Cabot A, Frankel P. Asymptotics for some semilinear hyperbolic equations with non-autonomous damping. J Differ Equ. 2012;252(1):294–322.

\bibitem{MayLong2015}
May R. Long time behavior for a semilinear hyperbolic equation with asymptotically vanishing damping term and convex potential. Journal of Mathematical Analysis and Applications. 2015;430(1):410–416.

%\bibitem{BalhagCR2020}
%Balhag A, Chbani Z, Riahi H. Linear convergence of inertial gradient dynamics with constant viscous damping coefficient and time-dependent rescaling parameter. preprint 2020. Available From: Hal:hal-02610699.

\bibitem{AttouchCRF2019}
Attouch H, Chbani Z, Riahi H. Fast proximal methods via time scaling of damped inertial dynamics. SIAM Journal on Optimization. 2019;29(3):2227–2256.

\bibitem{WibisonoWJA2016}
Wibisono A, Wilson AC, Jordan MI. A variational perspective on accelerated methods in optimization. Proceedings of the National Academy of Sciences. 2016;113(47):E7351–E7358.

\bibitem{FazlyabKPA2017}
Fazlyab M, Koppel A, Preciado VM, et al. A variational approach to dual methods for constrained convex optimization. 2017 American Control Conference (ACC); 2017. pp. 5269–5275.

\bibitem{AttouchCRFC2019}
Attouch H, Chbani Z, Riahi H. Fast convex optimization via time scaling of damped inertial gradient dynamics. Pure Appl Funct Anal. 2019;6(6):1081–1117.

\bibitem{HehufangSIAM2021}
He X, Hu R, Fang YP. Convergence rates of inertial primal-dual dynamical methods for separable convex optimization problems. SIAM Journal on Control and Optimization. 2021;59(5):3278-3301.

\bibitem{AttouchADMM(2022)}
Attouch H, Chbani Z, Fadili J, Riahi H. Fast convergence of dynamical ADMM via time scaling of damped inertial dynamics. Journal of Optimization Theory and Applications. 2022;193:704-736.

\bibitem{BNguyen2022}
Boţ RI, Nguyen DK. Improved convergence rates and trajectory convergence for primal-dual dynamical systems with vanishing damping. Journal of Differential Equations. 2021;303:369-406.

\bibitem{HulettNeuyen2023}	
Hulett DA, Nguyen DK. Time rescaling of a primal-dual dynamical system with asymptotically vanishing damping. Applied Mathematics \& Optimization. 2023; 88(27). 



\bibitem{HeHFiietal(2022)}
He X, Hu R, Fang YP. Fast primal–dual algorithm via dynamical system for a linearly constrained convex optimization problem. Automatica. 2022;146.



\bibitem{HeHUFang2022sf}
He X, Hu R, Fang YP. ``Second-order primal"+``first-order dual" dynamical systems with time scaling for linear equality constrained convex optimization problems. IEEE Transactions on Automatic Control. 2022;67(8):4377-4383.

\bibitem{HHF-AA}
He X, Hu R, Fang YP. Inertial primal-dual dynamics with damping and scaling for linearly constrained
convex optimization problems. Applicable Analysis.  2023; 102(15): 4114–4139.

\bibitem{HTLF-2023}
He X, Tian F,  Li AQ, Fang YP. Convergence rates of mixed primal-dual dynamical systems with Hessian driven damping. Optimization.  (2023): DOI: 10.1080/02331934.2023.2253813.

\bibitem{Liuxw}
Jiang ZY. Wang D, Liu, XW. A fast primal-dual algorithm via dynamical system with variable mass for linearly constrained convex optimization.Optimization Letters.(2024) https://doi.org/10.1007/s11590-023-02091-9.


\bibitem{AbbasAttouchS2014}
Abbas B, Attouch H, Svaiter BF. Newton-like dynamics and forward–backward methods for structured monotone inclusions in Hilbert spaces. Journal of Optimization Theory and Applications. 2014; 161(2): 331-360.	

\bibitem{Opialweak1967}
Opial Z. Weak convergence of the sequence of successive approximations for nonexpansive mappings. Bulletin of the American Mathematical Society. 1967;73(4): 591-597.

\bibitem{BauschkeCombet2017}
Bauschke HH, Combettes PL. Convex analysis and monotone operatort theory in Hilbert spaces. CMS Books in Mathematics, Springer, New York, 2017.	




	
%\bibitem{HongandLuo:(2017)}
%Hong MY, Luo ZQ. On the linear convergence of the alternating direction method of multipliers. Mathematical Programming. 2017;162(1-2):165-199.

%\bibitem{HanDR(2022)}
%Han DR. A survey on some recent developments of alternating direction method of multipliers. Journal of the Operations Research Society of China. 2022;10(1):1-52.	
	
%\bibitem{Attouch(1996)}
%Attouch H. Viscosity solutions of minimization problems. SIAM Journal on Optimization. 1996;6(3):769-806.	
	
%\bibitem{AttouchCominetti1996}
%Attouch H, Cominetti R. A dynamical approach to convex minimization coupling approximation with the steepest descent method. Journal of Differential Equations. 1996;128(2):519-540.	
	
%\bibitem{HarauxAunique(1991)}
%Haraux A. Syst{\`e}mes dynamiques dissipatifs et applications. Paris: Masson; 1991.

%\bibitem{BrezisHilber1973}
%Br{\'e}zis H. Op{\'e}rateurs maximaux monotones et semi-groupes de contractions dans les espaces de Hilbert. Elsevier; 1973. 	
	

	
	
\end{thebibliography}
\end{document}